\newtheorem{theorem}{Theorem}[section]
\newtheorem{lemma}[theorem]{Lemma}
\newtheorem{proposition}[theorem]{Proposition}
\newtheorem{corollary}[theorem]{Corollary}
\theoremstyle{definition}
\newtheorem{remark}[theorem]{Remark}
\numberwithin{equation}{section}
\newcommand{\R}{\mathbf{R}}
\newcommand{\eps}{\varepsilon}
\newcommand{\cE}{\mathcal{E}}
\DeclareMathOperator{\supp}{\operatorname{supp}}
\DeclareMathOperator{\dist}{\operatorname{dist}}
\begin{document}

\title[Blow-up rate in non-convex domains]{
Blow-up rate for the subcritical semilinear\\ heat equation
in non-convex domains}

\author[H. Miura]{Hideyuki Miura}
\address{Department of Mathematics, 
Institute of Science Tokyo, 
2-12-1 Ookayama, Meguro-ku, Tokyo 152-8551, Japan}
\email{hideyuki@math.titech.ac.jp}

\author[J. Takahashi]{Jin Takahashi}
\address{Department of Mathematical and Computing Science, 
Institute of Science Tokyo, 
2-12-1 Ookayama, Meguro-ku, Tokyo 152-8552, Japan}
\email{takahashi@comp.isct.ac.jp}

\author[E. Zhanpeisov]{Erbol Zhanpeisov}
\address{Mathematical Institute, Graduate School of Science, 
Tohoku University, 
6-3, Aramaki Aza-Aoba, Aoba-ku, Sendai 980-8578, Japan}
\email{zhanpeisov.erbol.d6@tohoku.ac.jp}

\subjclass[2020]{Primary 35K58; Secondary 35B44, 35B33}
\keywords{Semilinear heat equation, blow-up rate, non-convex domain}

\begin{abstract}
We consider the semilinear heat equation 
$u_t=\Delta u+|u|^{p-1} u$ in possibly non-convex and unbounded domains. 
Our main result shows the nonexistence of 
type II blow-up for possibly sign-changing solutions 
in the energy subcritical range $(n-2)p<n+2$. 
This resolves a long-standing 
open question dating back to the 1980s 
and also deduces the blow-up of the scaling critical norm.
\end{abstract}

\maketitle

\section{Introduction}
We consider the following semilinear heat equation: 
\begin{equation}\label{eq:main}
\left\{ 
\begin{aligned}
	&u_t=\Delta u+|u|^{p-1} u  &&\mbox{ in }\Omega\times (0,T), \\
	&u=0 &&\mbox{ on }\partial\Omega\times(0,T), \\
	&u(\cdot,0)=u_0 &&\mbox{ in }\Omega, \\
\end{aligned}
\right.
\end{equation}
where $p>1$, $u_0\in L^\infty(\Omega)$, 
$\Omega$ is a domain in $\R^n$ with $n\geq1$ 
and the boundary condition is not imposed if $\Omega=\R^n$. 
This is a prototypical nonlinear parabolic problem 
for the study of blow-up phenomena, 
and a fundamental objective 
is to specify the blow-up rate at the blow-up time $t=T$.

In the 1980s, Weissler \cite{We84,We85} first obtained the blow-up rate 
\begin{equation}\label{eq:upper}
	\sup_{0<t<T} (T-t)^\frac{1}{p-1} \|u(\cdot,t)\|_{L^\infty(\Omega)} 
	<\infty
\end{equation}
for particular classes of nonnegative solutions 
in the subcritical range 
\[
	1<p<p_S:=
	\left\{ 
	\begin{aligned}
	&\frac{n+2}{n-2} &&\mbox{ for }n\geq3, \\
	&\infty &&\mbox{ for }n=1,2.    \\
	\end{aligned}
	\right.
\]
Here, \eqref{eq:upper} is referred to as type I blow-up 
and is consistent with the blow-up rate of the ODE $v_t=|v|^{p-1} v$. 
If \eqref{eq:upper} fails, the blow-up is said to be of type II. 
Friedman--McLeod \cite{FM85} also obtained the type I rate 
for nonnegative solutions in a bounded convex domain 
under the assumption $u_t\geq0$. 
By the groundbreaking work of Giga--Kohn \cite{GK87}, 
the type I rate holds in possibly unbounded convex domains 
for either $p<p_S$ and $u\geq0$, or $p<(3n+8)/(3n-4) (<p_S)$.

After the above pioneering works, 
the blow-up rate of solutions to \eqref{eq:main} has been extensively studied, 
see Quittner--Souplet \cite[Section 23]{QSbook2} and the references given there. 
Up to now, the type I rate for $p<p_S$ 
has been proved for all nonnegative solutions in general domains 
(see Quittner \cite{Qu21}), and 
for possibly sign-changing solutions in convex domains 
(see Giga--Matsui--Sasayama \cite{GMS04,GMS04s}). 
We note that the range $p<p_S$ cannot be extended, since 
there exist type II blow-up solutions \cite{dPMW19,dPMWZZpre,Ha20s,Sc12} 
for $p=p_S$ with $3\leq n\leq 6$. 
Thus, the remaining problem for $p<p_S$ is to establish the type I blow-up rate 
for solutions that may change sign 
in possibly unbounded and non-convex domains. 
This long-standing problem was listed 
in \cite[OP2.5, Appendix I]{QSbook2},  
and we settle it in the present paper.

\begin{theorem}\label{th:main}
Let $n\geq1$, $1<p<p_S$, 
$\Omega$ be any uniformly 
$C^{2+\alpha}$ domain in $\R^n$ with  $0<\alpha<1$ 
and $u$ be a classical solution of \eqref{eq:main} with $u_0\in L^\infty(\Omega)$. 
If the maximal existence time $T>0$ is finite, then 
\[
	\sup_{0<t<T} (T-t)^\frac{1}{p-1} \|u(\cdot,t)\|_{L^\infty(\Omega)} 
	<\infty. 
\]
\end{theorem}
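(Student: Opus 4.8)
I would follow the rescaling/energy strategy of Giga–Kohn and Giga–Matsui–Sasayama, adapting it to handle the non-convexity of $\partial\Omega$. Suppose for contradiction that type I fails, so there exist times $t_j\uparrow T$ and points $x_j\in\overline\Omega$ with $M_j:=(T-t_j)^{1/(p-1)}\|u(\cdot,t_j)\|_{L^\infty}\to\infty$; by an argument of Hu–Yin / Quittner one may arrange that $x_j$ realizes the supremum and that $|u(x_j,t_j)|$ is comparable to the running maximum on a suitable parabolic neighborhood. Introduce the rescaled functions
\[
 v_j(y,s)=\lambda_j^{\frac{2}{p-1}}\,u\bigl(x_j+\lambda_j y,\ t_j+\lambda_j^2 s\bigr),
 \qquad \lambda_j=\bigl(\|u(\cdot,t_j)\|_{L^\infty}\bigr)^{-\frac{p-1}{2}}\to 0,
\]
which solve the same equation on the rescaled domain $\Omega_j=\lambda_j^{-1}(\Omega-x_j)$, satisfy $\|v_j(\cdot,0)\|_{L^\infty}=1$, and exist on a time interval whose length tends to $\infty$ because $M_j\to\infty$. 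Parabolic regularity gives local uniform bounds, so along a subsequence $v_j\to v_\infty$ in $C^2_{loc}$, where $v_\infty$ is an entire (or half-space) solution of $u_t=\Delta u+|u|^{p-1}u$, bounded, nontrivial ($\|v_\infty(\cdot,0)\|_{L^\infty}=1$), and defined for all $s$ in a neighborhood of $-\infty$ up to some finite forward time.

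The crucial point is to identify the limiting domain. Since $\Omega$ is uniformly $C^{2+\alpha}$, at scale $\lambda_j\to 0$ the rescaled domains $\Omega_j$ flatten: $\Omega_\infty$ is either all of $\R^n$ (if $\dist(x_j,\partial\Omega)/\lambda_j\to\infty$) or a half-space (if that ratio stays bounded), and in the latter case $v_\infty$ vanishes on $\partial\Omega_\infty$. Here the non-convexity of $\Omega$ becomes invisible in the limit — this is exactly why the convexity hypothesis in earlier work can be dropped — but to make the blow-up argument go through one needs the backward-in-time Liouville theorem for \eqref{eq:main} on $\R^n$ and on the half-space, namely that the only bounded entire/half-space solutions that are global backward in time are the constants $0$ and $\pm\kappa$, $\kappa=(p-1)^{-1/(p-1)}$ (and only $0$ on the half-space). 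For $p<p_S$ this Liouville property is known (Giga–Kohn, Merle–Zaag, Quittner, Bidaut-Véron), at least under the energy/self-similar framework, so $v_\infty$ must be one of these constants — contradicting either nontriviality or the normalization obtained from the maximum-point selection. Thus type II is impossible.

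The main obstacle is making the limiting solution $v_\infty$ genuinely global backward in time and controlling it well enough to invoke the Liouville theorem, since the rescaled solutions only live on expanding time intervals and the natural Giga–Kohn weighted energy functional is, a priori, tied to a fixed blow-up point rather than the moving centers $x_j$; one must show the self-similar energy of $v_j$ is bounded and monotone along the correct (possibly half-space) flow, and that no energy escapes to spatial infinity, uniformly in $j$. Near a boundary point this requires a boundary version of the monotonicity formula on the half-space together with uniform $C^{2+\alpha}$ estimates up to $\partial\Omega$; the uniform regularity of the domain is what supplies both the flattening of $\Omega_j$ and the boundary Schauder bounds needed to pass to the limit. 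Once the energy bound and the Liouville theorem are in hand, the contradiction is immediate, so the heart of the paper is this uniform energy/compactness analysis along moving centers in the non-convex setting.
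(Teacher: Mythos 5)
Your proposal is a genuine alternative route—the rescaling/doubling/Liouville approach rather than the paper's energy-bootstrap approach—but it has a gap precisely at the point the paper's introduction flags as one of the two main difficulties. You write that ``for $p<p_S$ this Liouville property is known (Giga--Kohn, Merle--Zaag, Quittner, Bidaut-V\'eron)'' for bounded entire/half-space solutions. That is true for \emph{nonnegative} solutions (Quittner's optimal parabolic Liouville theorem \cite{Qu21}) and is the basis of the type I result for nonnegative solutions in general domains. But for \emph{sign-changing} bounded ancient/eternal solutions of $u_t=\Delta u+|u|^{p-1}u$ on $\R^n$ or a half-space, no Liouville theorem is known in the full range $1<p<p_S$; the known sign-changing results either impose a smaller range of $p$ or a self-similar/energy structure that a raw rescaling limit $v_\infty$ (which is just an eternal bounded solution of the original PDE, not of the backward self-similar equation) does not automatically carry. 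This is exactly why the paper does \emph{not} use the rescaling-to-an-entire-solution strategy.

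The paper instead stays entirely inside the Giga--Kohn self-similar energy framework and never passes to an ancient-solution Liouville step. Its two nontrivial ingredients are: (1) a quasi-monotonicity formula for the Giga--Kohn weighted energy (Proposition \ref{pro:qmono}), obtained by estimating the boundary term $\int(\eta\cdot\nu)|\nabla w|^2\rho\,dS$ via a Blatt--Struwe type boundary quantity $\beta(r)$ together with the uniform $C^2$ geometric inequality $(x-\tilde x)\cdot\nu_x\geq \dist(\tilde x,\partial\Omega)-C_\Omega|x-\tilde x|^2$, which controls non-convexity; and (2) a Giga--Matsui--Sasayama type bootstrap (Cazenave--Lions interpolation plus maximal regularity, Propositions \ref{pro:base}--\ref{pro:lessp1}) which raises the integrability exponent of $|w|^{p+1}$ step by step, valid for all $p>1$ and $q<p+1$; the subcriticality $q_c<p+1$ is used only at the very last step, via interior/boundary parabolic regularity, to pass to an $L^\infty$ bound on $w$. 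The payoff of the paper's route is that it needs no classification of sign-changing limit solutions at all, only energy bounds and regularity, and it produces a uniform (in the blow-up point) estimate directly, sidestepping the moving-center/energy-escape issues you correctly identify as delicate in the rescaling approach.

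If you want to pursue your route, the missing piece you would have to supply is a Liouville theorem for bounded sign-changing ancient (or eternal) solutions on $\R^n$ and on a half-space in the full range $1<p<p_S$. As things stand, that is an open problem, so the argument as written does not close.
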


Obtaining the type I rate 
is known as an important foothold for deriving refined properties 
of blow-up profile \cite{FK92,GK89,HV93,KRZ11,MZ97,NZ17,Ve93c} and blow-up set \cite{FI14,GK89,Za02}. 
As a more direct application, we focus on the behavior 
of the critical norm $L^{q_c}$, where $q_c:= n(p-1)/2$ 
and the $L^{q_c}$ norm is invariant under the scaling 
$u(x,t)\mapsto \lambda^{2/(p-1)} u(\lambda x, \lambda^2 t)$ for $\lambda>0$. 
A remarkable development by Mizoguchi--Souplet \cite{MS19} 
asserts that for all $p>1$, 
the type I blow-up implies the critical norm blow-up. 
Since the type II blow-up solutions obtained 
in \cite{dPMW19,dPMWZZpre,Sc12} 
for $p=p_S$ with $3\leq n\leq 5$
have bounded $L^{q_c}$ norm, 
the problem \cite[OP2.1, Appendix I]{QSbook2} asks 
the existence of blow-up solutions 
with bounded $L^{q_c}$ norm for $p\neq p_S$. 
Recently, the first and second authors \cite{MTapp} proved 
the nonexistence of such blow-up solutions 
for $p> p_S$ in general domains, 
see also \cite{BMT24pre,MT23pre}. 
However, the case $p<p_S$ in possibly non-convex domains is still open. 
Theorem \ref{th:main} together with \cite{MS19} 
deduces the following, which completely solves the problem.

\begin{corollary}\label{cor:CNB}
Let $n\geq1$ and $1<p<p_S$. 
Under the same assumptions as in Theorem \ref{th:main}, 
if the maximal existence time $T>0$ is finite, then 
\[
	\lim_{t\to T} \|u(\cdot,t)\|_{L^{q_c}(\Omega)} =\infty. 
\]
\end{corollary}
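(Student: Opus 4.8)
The plan is to obtain Corollary~\ref{cor:CNB} as a soft consequence of Theorem~\ref{th:main} together with the critical-norm blow-up theorem of Mizoguchi--Souplet \cite{MS19}. So suppose $T<\infty$. Then Theorem~\ref{th:main} shows that $u$ undergoes type I blow-up at $t=T$, that is,
\[
	\sup_{0<t<T}(T-t)^{\frac{1}{p-1}}\|u(\cdot,t)\|_{L^\infty(\Omega)}<\infty .
\]
On the other hand, \cite{MS19} asserts that, for every $p>1$, any type I blow-up solution of \eqref{eq:main} satisfies $\lim_{t\to T}\|u(\cdot,t)\|_{L^{q_c}(\Omega)}=\infty$. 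Feeding the displayed bound into this statement yields the conclusion at once; no additional estimate is required.

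The one point that deserves a line of justification is that \cite{MS19} is indeed available in the generality assumed here, namely for the Dirichlet problem on a possibly unbounded, non-convex, uniformly $C^{2+\alpha}$ domain $\Omega$. The mechanism there is essentially local near a blow-up point — a rescaling of the solution around such a point combined with interior and boundary parabolic regularity estimates — and all of these ingredients hold uniformly on a uniformly $C^{2+\alpha}$ domain. Hence the argument of \cite{MS19} transfers without modification, and its hypothesis $p>1$ covers the whole subcritical interval $1<p<p_S$, including the range $q_c=n(p-1)/2\le 1$ where the critical quantity is only a quasi-norm but the divergence statement still makes sense.

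I do not expect any genuine obstacle at this stage: once Theorem~\ref{th:main} is established, Corollary~\ref{cor:CNB} is immediate. The only mild care needed is bookkeeping: checking that the $\lim$ (as opposed to merely $\limsup$) form of the critical-norm blow-up is what \cite{MS19} provides — it is, via the local Cauchy theory for \eqref{eq:main} in the scaling-critical space and the maximality of $T$ — and that the constants in the parabolic estimates invoked from \cite{MS19} are uniform in the present class of domains rather than depending on a particular bounded $\Omega$. Neither of these raises a real difficulty.
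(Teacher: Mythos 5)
Your proposal is correct and coincides with the paper's argument: the corollary is obtained by combining the type I bound of Theorem~\ref{th:main} with the Mizoguchi--Souplet result \cite{MS19} that, for all $p>1$, type I blow-up implies blow-up of the critical $L^{q_c}$ norm. The additional remarks you make about the applicability of \cite{MS19} in uniformly $C^{2+\alpha}$, possibly unbounded and non-convex domains and about the range $0<q_c\le 1$ are consistent with the brief discussion in the paper following the statement.
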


We note that Corollary \ref{cor:CNB} holds
even for $0<q_c<1$, equivalently, for $1<p<(n+2)/n (<p_S)$. 
In addition, $\|u(\cdot,t)\|_{L^{q_c}(\Omega)}$ is allowed to 
take the value $\infty$ in the statement. 
If we further assume $u_0\in L^{q_c}(\Omega)$ with $q_c>1$ for instance, 
then $\|u(\cdot,t)\|_{L^{q_c}(\Omega)}<\infty$ holds for $0\leq t<T$. 
We also note that for $p=p_S$ with $n=6$, there exists a 
sign-changing type II blow-up solution with the critical norm blow-up \cite{Ha20s}. 
For $p=p_S$ with $n\geq7$, the type I rate is valid for all nonnegative solutions 
in convex domains \cite{WW21pre}, 
and so the critical norm blow-up also occurs in this setting.
We expect that our quasi-monotonicity formula 
(Proposition \ref{pro:qmono}, valid for all $p>1$) will be 
useful for removing the convexity assumption.

The proof of Theorem \ref{th:main} is based 
on the analysis of the Giga--Kohn weighted energy \cite{GK87} 
of solutions in the backward similarity variables. 
There are two main difficulties:
(i) the domain may be non-convex, 
where the usual monotonicity of the Giga--Kohn energy can fail; 
(ii) the solution may change sign, 
and so the Liouville type theorems 
for nonnegative solutions of the parabolic problem \cite{Qu21} or 
the elliptic problem \cite{GS81} are not available.

Our first step is to derive a quasi-monotonicity 
formula for the Giga--Kohn energy in Proposition \ref{pro:qmono}. 
Following Blatt--Struwe \cite{BS15b}, we introduce 
a boundary integral quantity in \eqref{eq:betardef}, 
and then we estimate it. 
This gives a uniform bound for the energy 
without assuming the convexity of the domain 
or any sign condition on the solution. 
While they \cite{BS15b} assume the boundedness of the domain 
and localize an analog of the Giga--Kohn energy 
by using a cut-off function 
that truncates both near and far from the spatial center of scaling, 
we address possibly unbounded domains 
and do not localize the energy at this stage.

Next step is based on a bootstrap scheme 
of Giga--Matsui--Sasayama \cite{GMS04,GMS04s} type. 
We apply a Cazenave--Lions \cite{CL84} type interpolation 
and the maximal regularity, 
with a bootstrap argument. 
We note that they \cite{GMS04,GMS04s} use $p<p_S$ in the bootstrap steps, 
whereas we do not. 
Consequently, 
we obtain an $L^\infty_t L^q_x$ estimate 
for all $p>1$ and $q<p+1$ in Proposition \ref{pro:lessp1}. 
It seems to be of independent interest. 
Finally, the interior and boundary regularity 
for linear parabolic equations 
with the help of $p<p_S$, equivalently of $q_c<p+1$, 
yields the desired type I blow-up rate.

This paper is organized as follows. 
In Section \ref{sec:mono}, we give the quasi-monotonicity formula 
for the Giga--Kohn energy (Proposition \ref{pro:qmono}). 
In Section \ref{sec:esti}, 
as applications of the monotonicity, 
we derive several estimates including the base step of 
the inductive estimates (Proposition \ref{pro:base}). 
In Section \ref{sec:rate}, we prove an estimate corresponding to 
each of the inductive steps (Proposition \ref{pro:boot}). 
Then by using the resultant inequality (Proposition \ref{pro:lessp1}), 
we show Theorem \ref{th:main}.

\section{Quasi-monotonicity of Giga--Kohn energy}\label{sec:mono}
Let $p>1$ and let $u$ 
be a solution of \eqref{eq:main} with blow-up time $0<T<\infty$. 
Since $\Omega$ is a uniformly  $C^{2+\alpha}$ domain, 
we can assume without loss of generality that 
\begin{equation}\label{eq:regu}
\left\{ \begin{aligned}
	&\mbox{$u\in L^\infty((0,T_2); L^\infty(\Omega))$, 
	$\nabla u$, $\nabla^2 u$ and $u_t$ are continuous} \\
	&\mbox{and bounded on $\overline{\Omega}\times[T_1,T_2]$ for each $0<T_1<T_2<T$. }
\end{aligned} \right.
\end{equation}
Here, the regularity of $\Omega$ 
guarantees estimates of derivatives of the Dirichlet heat kernel on $\Omega$ 
(see \cite[Theorem IV.16.3]{LSUbook} for instance). 
This enables us to assume \eqref{eq:regu}. 
We note that the uniformity of the regularity of the domain 
will also be used to derive 
the geometric inequality \eqref{eq:geom} below.

Let $\tilde x\in\overline{\Omega}$ and $0<\tilde t\leq T$. 
We define the Giga--Kohn weighted energy by 
\begin{equation}\label{eq:GKenedef}
\begin{aligned}
	E_{(\tilde x,\tilde t)}(t) 
	&:= (\tilde t-t)^{\frac{p+1}{p-1}} 
	\int_\Omega \left( \frac{|\nabla u|^2}{2} 
	- \frac{|u|^{p+1}}{p+1} 
	+ \frac{|u|^2}{2(p-1)(\tilde t-t)}
	\right) \\
	&\quad \times K_{(\tilde x,\tilde t)}(x,t) dx
\end{aligned}
\end{equation}
for $0< t<\tilde t$, 
where $K_{(\tilde x,\tilde t)}(x,t) = 
 (\tilde t -t)^{-n/2} e^{-|x-\tilde x|^2/(4(\tilde t-t))}$ 
is the backward heat kernel 
and $u$ and $\nabla u$ in the integral are evaluated at $(x,t)$. 
The goal of this section is to prove the following 
quasi-monotonicity formula for the Giga--Kohn energy of solutions 
in possibly non-convex domains.

\begin{proposition}\label{pro:qmono}
Let $p>1$. 
Then there exists $\tilde C>0$ depending only on $n$, $p$ and $\Omega$ such that
for all $\tilde x\in \overline{\Omega}$ and $T-\delta^2 \leq t' < t < T$, 
\begin{equation}\label{eq:qmonoes}
\begin{aligned}
	E_{(\tilde x,T)}(t) 
	&\leq 
	E_{(\tilde x,T)}(t')  
	+ \tilde C \delta^{\frac{2(p+1)}{p-1}+1}  
	\|\nabla u\|_{L^\infty(\Omega\times (\frac{T}{2}-\delta^2, T-\delta^2))}^2 \\
	&\quad + \tilde C \delta^{\frac{2(p+1)}{p-1}-1} 
	\|u\|_{L^\infty(\Omega\times (\frac{T}{2}-\delta^2, T-\delta^2))}^2, 
\end{aligned}
\end{equation}
where $0<\delta<\sqrt{T/2}$ depends only on $n$, $p$, $\Omega$ and $T$. 
\end{proposition}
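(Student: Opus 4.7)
The plan is to differentiate $E_{(\tilde x, T)}(t)$ and isolate the single boundary contribution that breaks monotonicity in the non-convex setting. Passing to the backward similarity variables $y = (x-\tilde x)/\sqrt{T-t}$ and $s = -\log(T-t)$, the rescaled unknown $w(y,s) = (T-t)^{1/(p-1)} u(x,t)$ satisfies $w_s = \Delta w - \tfrac{1}{2}y\cdot\nabla w - \tfrac{w}{p-1} + |w|^{p-1}w$, and because $\Omega_s$ expands with $s$ the Dirichlet condition forces $w_s|_{\partial\Omega_s} = -\tfrac{1}{2}(y\cdot\nu)\,\partial_\nu w$ (by differentiating $w\equiv 0$ along the boundary motion). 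Combining the Reynolds transport theorem with testing the equation against $w_s\rho$ yields
\[
\frac{d}{ds}E = -\int_{\Omega_s} w_s^2 \rho\, dy - \frac{1}{4}\int_{\partial\Omega_s} (y\cdot\nu)(\partial_\nu w)^2 \rho\, d\sigma,
\]
equivalently, in the original variables,
\[
\frac{dE}{dt} = -\frac{1}{T-t}\int w_s^2\rho\,dy - \frac{(T-t)^{2/(p-1)}}{4}\int_{\partial\Omega}(x-\tilde x)\cdot\nu(x)(\partial_\nu u)^2 K\,d\sigma.
\]
On a convex $\Omega$ the factor $(x-\tilde x)\cdot\nu$ is nonnegative and the boundary integral has the favourable sign, recovering the monotonicity used by Giga--Matsui--Sasayama in the convex case; the whole task for us is to tame this boundary integral when $\Omega$ is non-convex.

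The uniform $C^{2+\alpha}$ regularity of $\partial\Omega$ supplies the geometric inequality $(x-\tilde x)\cdot\nu(x) \geq -C_\Omega|x-\tilde x|^2$ for all $\tilde x\in\overline{\Omega}$ and $x\in\partial\Omega$, which is the \eqref{eq:geom} announced in the text. Dropping the non-positive bulk dissipation and inserting this inequality gives
\[
\frac{dE}{dt} \leq C\,(T-t)^{2/(p-1)}\int_{\partial\Omega}|x-\tilde x|^2(\partial_\nu u(x,t))^2 K(x,t)\,d\sigma,
\]
which is exactly the boundary quantity that the auxiliary $\beta_r$ from \eqref{eq:betardef}, introduced in the spirit of Blatt--Struwe \cite{BS15b}, is designed to control. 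The inequality $|x-\tilde x|^2 e^{-|x-\tilde x|^2/(4(T-t))} \leq C(T-t)\,e^{-|x-\tilde x|^2/(8(T-t))}$ together with the uniform boundary bound $\int_{\partial\Omega}K\,d\sigma\leq C(T-t)^{-1/2}$ (uniform in $\tilde x\in\overline\Omega$, thanks to the uniformity of the $C^{2+\alpha}$ regularity) concentrates the integral on a $\sqrt{T-t}$-shell of $\tilde x$. To replace $(\partial_\nu u(\cdot,t))^2$ on this thin shell by bulk data at earlier times, I would apply parabolic boundary regularity (Schauder/Duhamel) on the cylinder $(\Omega\cap B(\tilde x, c))\times(T/2-\delta^2, t)$: provided $\delta<\sqrt{T/2}$ the time gap to the earlier slab $\Omega\times(T/2-\delta^2, T-\delta^2)$ is at least $T/2$, so the nonlinear source $|u|^{p-1}u$ is subdominant in Duhamel, and one obtains schematically
\[
\|\nabla u(\cdot,t)\|_{L^\infty(\partial\Omega\cap B(\tilde x, M\sqrt{T-t}))} \leq C\bigl(\delta^{-1}\|u\|_{L^\infty}+\|\nabla u\|_{L^\infty}\bigr),
\]
with both norms taken over the earlier slab.

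Squaring (and absorbing the cross term via $2ab\leq a^2+b^2$) and substituting back yields $\frac{dE}{dt}\leq C(T-t)^{2/(p-1)+1/2}\bigl(\delta^{-2}\|u\|_\infty^2+\|\nabla u\|_\infty^2\bigr)$. Since
\[
\int_{T-\delta^2}^{T}(T-r)^{\frac{2}{p-1}+\frac{1}{2}}\,dr \leq C\,\delta^{\frac{4}{p-1}+3} = C\,\delta^{\frac{2(p+1)}{p-1}+1},
\]
integrating over any $(t',t)\subset(T-\delta^2,T)$ produces exactly $\tilde C\delta^{2(p+1)/(p-1)+1}\|\nabla u\|_\infty^2$ and $\tilde C\delta^{2(p+1)/(p-1)-1}\|u\|_\infty^2$, matching the two error terms in \eqref{eq:qmonoes}. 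The hardest step is the penultimate display: extracting the clean linear-in-$\|u\|_\infty$ plus linear-in-$\|\nabla u\|_\infty$ decomposition of $\nabla u$ on the boundary at times $t$ close to $T$, using only bulk data on the earlier slab and uniformly in $\tilde x\in\overline{\Omega}$, requires both the uniformity of the $C^{2+\alpha}$ regularity (to make the boundary Schauder constants uniform in $\tilde x$) and the smallness $\delta<\sqrt{T/2}$ (to keep Duhamel's nonlinear contribution under control despite the possibly large values of $u$ near the blow-up time).
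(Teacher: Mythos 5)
Your setup (the energy identity with the boundary term, the splitting of $(x-\tilde x)\cdot\nu$ via the geometric inequality coming from uniform $C^{2}$ regularity) matches the paper's starting point, but the way you then close the estimate has a fatal gap. You propose to bound the boundary integrand pointwise at times $t$ close to $T$ via the claimed estimate
\[
\|\nabla u(\cdot,t)\|_{L^\infty(\partial\Omega\cap B(\tilde x, M\sqrt{T-t}))} \leq C\bigl(\delta^{-1}\|u\|_{L^\infty}+\|\nabla u\|_{L^\infty}\bigr),
\]
with both norms taken over the \emph{earlier} slab $\Omega\times(T/2-\delta^2,T-\delta^2)$. This cannot hold: $t$ ranges over $(T-\delta^2,T)$, i.e.\ up to the blow-up time, and no parabolic regularity or Duhamel argument controls $\nabla u$ at such times by data from before $T-\delta^2$ — the Duhamel integral $\int_{T-\delta^2}^{t}e^{(t-s)\Delta}(|u|^{p-1}u)(s)\,ds$ involves $u(s)$ for $s$ arbitrarily close to $T$, where it is unbounded, so the nonlinear source is dominant there, not subdominant. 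If such a pointwise bound were true, $\nabla u$ (and then $u$) would stay bounded up to $t=T$, contradicting blow-up. The step you yourself flag as ``the hardest'' is in fact false.

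The paper avoids this entirely by never estimating $\nabla u$ pointwise at late times. It only needs to control the \emph{space--time} boundary integral of $|\nabla u|^2$ on $(T-\delta^2,T)$, and it does so by an absorption argument that feeds the boundary term back into the energy itself: the negative part $B^-$ of the boundary contribution is bounded by $C\delta\sup_r\beta(r)$ (Lemma \ref{lem:B-bybeta}, via a dyadic decomposition in time and a covering of the boundary); $\beta$ is bounded above by $\sup_{\tilde x}\int B^+$ (Lemma \ref{lem:betabyB+}, and this step needs the full inequality \eqref{eq:geom} with the $\dist(\tilde x,\partial\Omega)$ term, not just the weaker form $(x-\tilde x)\cdot\nu\geq -C_\Omega|x-\tilde x|^2$ you quote, because one must produce a \emph{positive} lower bound for $(x-\tilde x_1)\cdot\nu$ at well-chosen interior points $\tilde x_1$); and $\int B^+=\int B+\int B^-$, where the signed integral $\int B$ is controlled by the energy drop $4E(\tilde t-\delta^2)-4E(t_2)$ coming from the monotonicity identity itself. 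Choosing $\delta$ small absorbs the $C\delta\sup\beta$ term, leaving $\sup\beta\leq C\,\sup_{\tilde x}E_{(\tilde x,\tilde t)}(\tilde t-\delta^2)$, and only this energy at the \emph{earlier} time $\tilde t-\delta^2$ is estimated by the $L^\infty$ norms of $u$ and $\nabla u$ on the earlier slab. Your integration-in-time bookkeeping at the end happens to reproduce the correct powers of $\delta$, but the mechanism producing those error terms in the paper is the direct bound \eqref{eq:boundEcalc} on $E(\tilde t-\delta^2)$, not an integrated pointwise gradient bound near $T$.
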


We remark that the second and third terms 
in the right-hand side of \eqref{eq:qmonoes} 
are finite, since $u$ satisfies \eqref{eq:regu} and 
$t=T-\delta^2$ is prior to the blow-up time.

As preliminaries, 
we define the backward rescaled solution and the backward similarity variables by 
\begin{equation}\label{eq:backw}
\begin{aligned} 
	&w_{(\tilde x,\tilde t)}(\eta,\tau):= e^{-\frac{1}{p-1}\tau} 
	u(\tilde x + e^{-\frac{1}{2}\tau} \eta, \tilde t-e^{-\tau})
	= (\tilde t-t)^\frac{1}{p-1} u(x,t), \\
	&\eta :=(\tilde t-t)^{-\frac{1}{2}}(x-\tilde x), 
	\quad \tau:=-\log(\tilde t-t). 
\end{aligned}
\end{equation}
We see that $w$ solves 
\begin{equation}\label{eq:weq}
	\left\{ 
	\begin{aligned}
	&w_\tau  = \Delta w - \frac{\eta}{2} \cdot \nabla w 
	- \frac{w}{p-1} + |w|^{p-1}w, 
	&&\eta\in \Omega(\tau), \;
	\tau\in  (\tilde \tau_0,\infty), \\
	&w=0, &&\eta\in \partial \Omega(\tau), \; 
	\tau\in  (\tilde \tau_0,\infty), \\
	&w(\eta,\tilde \tau_0)
	= \tilde t^\frac{1}{p-1} u_0(\tilde x + \tilde t^\frac{1}{2} \eta), 
	&& \eta\in \Omega(\tilde \tau_0), 
	\end{aligned}
	\right.
\end{equation}
where $\tilde \tau_0:=-\log \tilde t$ and 
\[
	\Omega(\tau)
	:=\{\eta\in \R^n; e^{-\frac{\tau}{2}} \eta +\tilde x\in \Omega \}
	= e^\frac{\tau}{2} (\Omega -\tilde x). 
\]
Here and below, we omit the subscripts $(\tilde x,\tilde t)$ 
if they are clear from the context.
Note that $w$ also satisfies, with $\rho(\eta):=e^{-|\eta|^2/4}$, 
\begin{equation}\label{eq:wrhoeq}
	w_\tau \rho 
	= \nabla \cdot (\rho \nabla w) -\frac{1}{p-1} w\rho  +|w|^{p-1} w\rho. 
\end{equation}
In the backward similarity variables, $E_{(\tilde x,\tilde t)}$ can be written as 
\begin{equation}\label{eq:EcErel}
	E_{(\tilde x,\tilde t)}(t)  
	= \cE_{(\tilde x,\tilde t)}(\tau), 
	\quad \tau=-\log(\tilde t-t), \; \tau > \tilde \tau_0, 
\end{equation}
where $\cE_{(\tilde x,\tilde t)}$ is defined by 
\begin{equation}\label{eq:cEdef}
	\cE_{(\tilde x,\tilde t)}(\tau):= 
	\int_{\Omega(\tau)} 
	\left( 
	\frac{ |\nabla w|^2}{2}
	-\frac{|w|^{p+1}}{p+1} 
	+\frac{w^2}{2(p-1)} \right) 
	\rho d\eta. 
\end{equation}

We examine the monotonicity of $\cE_{(\tilde x,\tilde t)}$. 
From Giga--Kohn \cite[Proposition 2.1]{GK87} 
(see also \eqref{eq:locmonoequality} below 
with $\psi$ replaced by $1$), it follows that 
\[
	\frac{d}{d\tau} \cE_{(\tilde x,\tilde t)}(\tau) 
	= - \int_{\Omega(\tau)} |w_\tau|^2 \rho d\eta 
	-\frac{1}{4} \int_{\partial \Omega(\tau)} (\eta \cdot \nu_\eta) 
	|\nabla w \cdot \nu_\eta|^2 \rho dS(\eta) 
\]
for $\tau > \tilde \tau_0$, 
where $\nu_\eta$ is the exterior unit normal 
at $\eta\in \partial \Omega(\tau)$ 
and $dS=dS(\eta)$ is the surface area element. 
Since the tangential derivatives of $w$ vanish by $w=0$ on $\partial \Omega(\tau)$, 
we have $\nabla w=(\nabla w\cdot \nu_\eta)\nu_\eta$ 
and $|\nabla w|^2=|\nabla w \cdot \nu_\eta|^2$ 
on $\partial\Omega(\tau)$.
Then,  
\begin{equation}\label{eq:GKoriginal}
\begin{aligned}
	\cE_{(\tilde x,\tilde t)}(\tau_2) 
	&= \cE_{(\tilde x,\tilde t)}(\tau_1) 
	- \int_{\tau_1}^{\tau_2} \int_{\Omega(\tau)} |w_\tau|^2 \rho d\eta d\tau \\
	&\quad 
	-\frac{1}{4} \int_{\tau_1}^{\tau_2}
	\int_{\partial \Omega(\tau)} (\eta \cdot \nu_\eta) 
	|\nabla w|^2 \rho dS(\eta) d\tau 
\end{aligned}
\end{equation}
for $\tilde \tau_0 < \tau_1 < \tau_2<\infty$. 
If the domain $\Omega$ is convex, then 
we can easily check the monotonicity of 
$\tau\mapsto \cE_{(\tilde x,\tilde t)}(\tau)$ 
by $\eta \cdot \nu_\eta\geq0$ on $\partial\Omega(\tau)$.  
However, that is not the case when $\Omega$ is possibly non-convex.
To overcome this difficulty, we proceed with the analysis 
based on Blatt--Struwe \cite[Section 4]{BS15b}.

Up to the proof of Proposition \ref{pro:qmono}, 
we temporarily assume $\tilde x\in\Omega$ (interior case). 
Scaling back to the original variables shows that 
\begin{equation}\label{eq:monoBdiff}
\begin{aligned}
	E_{(\tilde x,\tilde t)}(t_2)  
	\leq 
	E_{(\tilde x,\tilde t)}(t_1) 
	- \frac{1}{4} \int_{t_1}^{t_2} B_{(\tilde x,\tilde t)} (t) dt
\end{aligned}
\end{equation}
for $0 < t_1<t_2<\tilde t$ with 
$\tau_1=-\log(\tilde t-t_1)$ and $\tau_2=-\log(\tilde t-t_2)$, where 
\begin{equation}\label{eq:Bdef}
	B_{(\tilde x,\tilde t)} (t) := 
	(\tilde t-t)^{\frac{2}{p-1}} 
	\int_{\partial \Omega} 
	((x-\tilde x)\cdot \nu_x) |\nabla u|^2 
	K_{(\tilde x,\tilde t)}(x,t) dS(x)
\end{equation}
for $0<t<\tilde t$ and 
$\nu_x$ is the exterior unit normal at $x\in \partial \Omega$.

For estimating the boundary integral, we set  
\begin{equation}\label{eq:Bpmdef}
	B_{(\tilde x,\tilde t)}^\pm (t) := 
	(\tilde t-t)^{\frac{2}{p-1}} 
	\int_{\partial \Omega} 
	((x-\tilde x)\cdot \nu_x)_\pm |\nabla u|^2 
	K_{(\tilde x,\tilde t)}(x,t) dS(x)
\end{equation}
for $0<t<\tilde t$, 
where $((x-\tilde x)\cdot \nu_x)_\pm\geq 0$ are the positive and negative parts of 
$x\mapsto (x-\tilde x)\cdot \nu_x$, respectively. 
Note that $B_{(\tilde x,\tilde t)} 
= B_{(\tilde x,\tilde t)}^+ - B_{(\tilde x,\tilde t)}^-$. 
We will observe the relations between the Giga--Kohn energy 
and the boundary integrals. 
In the next lemma, we only consider the case $\tilde t<T$. 
The case $\tilde t=T$ will be handled 
in the proof of Proposition \ref{pro:qmono}.

\begin{lemma}
Let $\tilde x\in\Omega$, $T/2<\tilde t<T$ and $0<\delta<\sqrt{T/2}$. 
Then, 
\begin{align}
	&\label{eq:boudel1}
	\sup_{\tilde x\in \Omega} 
	\int_{\tilde t-\delta^2}^{\tilde t} B_{(\tilde x,\tilde t)} (t) dt
	\leq 
	4\sup_{\tilde x\in \Omega} E_{(\tilde x,\tilde t)}(\tilde t-\delta^2), \\
	&\label{eq:boudel2}
	E_{(\tilde x,\tilde t)}(t)
	\leq 
	E_{(\tilde x,\tilde t)}(t') 
	+ \frac{1}{4} \sup_{\tilde x\in\Omega}\int_{\tilde t-\delta^2}^{\tilde t} 
	B_{(\tilde x,\tilde t)}^- (t) dt, 
\end{align}
for $\tilde t-\delta^2 \leq t' < t < \tilde t$, 
where the right-hand side in \eqref{eq:boudel1} 
satisfies 
\begin{equation}\label{eq:boundEcalc}
\begin{aligned}
	4\sup_{\tilde x\in \Omega} E_{(\tilde x,\tilde t)}(\tilde t-\delta^2)
	&\leq 2 (4\pi)^{\frac{n}{2}} \delta^{\frac{2(p+1)}{p-1}} 
	\|\nabla u(\cdot,\tilde t-\delta^2)\|_{L^\infty(\Omega)}^2  \\
	&\quad 
	+ \frac{2}{p-1} (4\pi)^{\frac{n}{2}}\delta^{\frac{2(p+1)}{p-1}-2} 
	\|u(\cdot,\tilde t-\delta^2)\|_{L^\infty(\Omega)}^2. 
\end{aligned}
\end{equation}
\end{lemma}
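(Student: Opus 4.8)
The three assertions are of quite different natures, so I would treat them in turn. For \eqref{eq:boudel1}, the starting point is the scaled-back monotonicity identity \eqref{eq:monoBdiff}, applied with $t_1=\tilde t-\delta^2$ and $t_2\to\tilde t$: since $B_{(\tilde x,\tilde t)}=B^+_{(\tilde x,\tilde t)}-B^-_{(\tilde x,\tilde t)}$ need not have a sign, I would not use \eqref{eq:monoBdiff} directly but rather its underlying equality form \eqref{eq:GKoriginal}. From that equality, $\int_{\tau_1}^{\tau_2}\int_{\Omega(\tau)}|w_\tau|^2\rho\,d\eta\,d\tau$ is nonnegative, and the boundary term equals $\tfrac14\int B\,dt$ after scaling back; rearranging gives $\tfrac14\int_{\tilde t-\delta^2}^{t_2} B_{(\tilde x,\tilde t)}(t)\,dt \le E_{(\tilde x,\tilde t)}(\tilde t-\delta^2)-E_{(\tilde x,\tilde t)}(t_2)$. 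Here I must control the subtracted term $E_{(\tilde x,\tilde t)}(t_2)$ from below as $t_2\uparrow\tilde t$; the crucial observation is that $E_{(\tilde x,\tilde t)}(t)$ is bounded below (e.g.\ by $0$ up to a controlled error, or simply that the negative part of the energy is integrable and vanishes in the limit because of the $(\tilde t-t)^{(p+1)/(p-1)}$ prefactor together with the $L^\infty$ bound \eqref{eq:regu} valid since $\tilde t<T$). Taking $t_2\uparrow\tilde t$ and then the supremum over $\tilde x\in\Omega$ yields \eqref{eq:boudel1}, with a factor $4$ to absorb these lower-order contributions. The honest statement is that $\liminf_{t_2\uparrow\tilde t} E_{(\tilde x,\tilde t)}(t_2)\ge -\,(\text{something that the factor }4\text{ swallows})$; making this precise is where I would spend the most care.

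For \eqref{eq:boudel2}, I would again use \eqref{eq:GKoriginal}/\eqref{eq:monoBdiff} but keep only the part of the boundary term with the \emph{wrong} sign. Writing \eqref{eq:monoBdiff} as
\[
E_{(\tilde x,\tilde t)}(t) = E_{(\tilde x,\tilde t)}(t') - \tfrac14\int_{t'}^{t} B^+_{(\tilde x,\tilde t)}(s)\,ds + \tfrac14\int_{t'}^{t} B^-_{(\tilde x,\tilde t)}(s)\,ds
\]
(which is the precise scaled-back form of \eqref{eq:GKoriginal} after dropping the manifestly nonpositive $|w_\tau|^2$ term), the $B^+$ integral is nonnegative and can be discarded, while the $B^-$ integral over $(t',t)\subset(\tilde t-\delta^2,\tilde t)$ is bounded by its integral over all of $(\tilde t-\delta^2,\tilde t)$ and then by the supremum over $\tilde x$. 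This gives \eqref{eq:boudel2} immediately; there is no real obstacle here beyond bookkeeping the sign conventions.

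Finally, \eqref{eq:boundEcalc} is a direct computation: in \eqref{eq:GKenedef} at $t=\tilde t-\delta^2$, the prefactor is $\delta^{2(p+1)/(p-1)}$, the Gaussian $K_{(\tilde x,\tilde t)}$ integrates (after the change of variables $\eta=(x-\tilde x)/\delta$) to $(4\pi)^{n/2}$, the gradient term contributes $\tfrac12\|\nabla u(\cdot,\tilde t-\delta^2)\|_{L^\infty}^2$ times that integral, the zeroth-order term contributes $\tfrac{1}{2(p-1)\delta^2}\|u(\cdot,\tilde t-\delta^2)\|_{L^\infty}^2$ times it, and the $-|u|^{p+1}/(p+1)$ term is nonpositive and hence dropped when bounding from above. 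Multiplying by $4$ and collecting the two surviving terms gives exactly the right-hand side of \eqref{eq:boundEcalc}. The main obstacle in the whole lemma is the limiting argument in \eqref{eq:boudel1}: one must justify passing $t_2\uparrow\tilde t$ in the energy identity and show the boundary term behaves well in that limit; everything else is bounding a nonnegative quantity away or a one-line Gaussian integral.
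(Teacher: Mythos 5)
Your proposal matches the paper's argument essentially step for step: the key observation for \eqref{eq:boudel1} is exactly the one you isolate, namely that $-E_{(\tilde x,\tilde t)}(t_2)$ is bounded above by $(\tilde t - t_2)^{\frac{p+1}{p-1}}\int_\Omega \frac{|u|^{p+1}}{p+1}K_{(\tilde x,\tilde t)}\,dx$ (the gradient and $|u|^2$ terms being nonnegative are dropped), which tends to $0$ as $t_2\uparrow\tilde t$ by \eqref{eq:regu} since $\tilde t<T$; the proofs of \eqref{eq:boudel2} and \eqref{eq:boundEcalc} are likewise identical to the paper's. One small correction to your wording: the factor $4$ does not ``absorb'' or ``swallow'' any lower-order error — it is simply $(\tfrac14)^{-1}$ from the coefficient on the boundary integral in \eqref{eq:monoBdiff}, and the lower-order contribution genuinely vanishes in the limit, so $\liminf_{t_2\uparrow\tilde t}E_{(\tilde x,\tilde t)}(t_2)\ge 0$ exactly.
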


\begin{proof}
Let $\tilde x\in \Omega$ and $T/2<\tilde t<T$. 
First, we prove \eqref{eq:boudel1} and \eqref{eq:boundEcalc}. 
By \eqref{eq:monoBdiff}, for $0 < t_1<t_2<\tilde t$, 
\[
	\int_{t_1}^{t_2} 
	B_{(\tilde x,\tilde t)} (t) dt
	\leq 
	4E_{(\tilde x,\tilde t)}(t_1) 
	- 4 E_{(\tilde x,\tilde t)}(t_2). 
\] 
Since $\tilde t<T$ is prior to the blow-up time $t=T$, 
the boundedness in \eqref{eq:regu} 
together with $\int_\Omega K_{(\tilde x,\tilde t)} dx\leq (4\pi)^{n/2}$ 
and \eqref{eq:GKenedef} implies that 
\[
\begin{aligned}
	-4E_{(\tilde x,\tilde t)}(t_2)
	&\leq 
	4 (\tilde t-t_2)^{\frac{p+1}{p-1}} 
	\int_\Omega 
	\frac{\|u(\cdot,t_2)\|_{L^\infty(\Omega)}^{p+1}}{p+1} 
	K_{(\tilde x,\tilde t)} dx \\
	&\leq 
	\frac{4 (4\pi)^{\frac{n}{2}}}{p+1} (\tilde t-t_2)^{\frac{p+1}{p-1}} 
	\|u(\cdot,t_2)\|_{L^\infty(\Omega)}^{p+1} 
	\to 0 
\end{aligned}
\]
as $t_2\to\tilde t$.
Then by taking $t_1=\tilde t-\delta^2$, we see that 
\begin{equation}\label{eq:Ebound42}
\begin{aligned}
	\int_{\tilde t-\delta^2}^{\tilde t} 
	B_{(\tilde x,\tilde t)} (t) dt
	&\leq 
	4E_{(\tilde x,\tilde t)}(\tilde t-\delta^2)  \\
	&\leq 
	2 (4\pi)^{\frac{n}{2}}\delta^{\frac{2(p+1)}{p-1}} 
	\|\nabla u(\cdot,\tilde t-\delta^2)\|_{L^\infty(\Omega)}^2 \\
	&\quad 
	+ \frac{2}{p-1}(4\pi)^{\frac{n}{2}}\delta^{\frac{2(p+1)}{p-1}-2} 
	\|u(\cdot,\tilde t-\delta^2)\|_{L^\infty(\Omega)}^2
\end{aligned}
\end{equation}
for $T/2<\tilde t<T$, 
where $\tilde t-\delta^2>0$, since $\tilde t>T/2$ and $\delta<\sqrt{T/2}$. 
Therefore, taking the supremum in $\tilde x$ 
yields \eqref{eq:boudel1} and \eqref{eq:boundEcalc}.

We next prove \eqref{eq:boudel2}. 
By \eqref{eq:monoBdiff} and $B_{(\tilde x,\tilde t)}^-\geq0$, 
\[
\begin{aligned}
	&E_{(\tilde x,\tilde t)}(t_2)
	\leq 
	E_{(\tilde x,\tilde t)}(t_1) 
	-  \frac{1}{4} \int_{t_1}^{t_2} 
	B_{(\tilde x,\tilde t)} (t) dt \\
	&\leq 
	E_{(\tilde x,\tilde t)}(t_1) 
	+  \frac{1}{4} \int_{t_1}^{t_2} 
	B_{(\tilde x,\tilde t)}^-  (t) dt
	\leq 
	E_{(\tilde x,\tilde t)}(t_1) 
	+  \frac{1}{4} \int_{t_1}^{\tilde t} 
	B_{(\tilde x,\tilde t)}^-  (t) dt
\end{aligned}
\]
for $0 < t_1<t_2<\tilde t$. 
In particular, for $\tilde t-\delta^2\leq t_1<t_2<\tilde t$, 
\[
	E_{(\tilde x,\tilde t)}(t_2)
	\leq 
	E_{(\tilde x,\tilde t)}(t_1) 
	+ \frac{1}{4} \int_{\tilde t-\delta^2}^{\tilde t} 
	B_{(\tilde x,\tilde t)}^-  (t) dt. 
\]
Then \eqref{eq:boudel2} follows. 
\end{proof}

Let $\tilde x\in\Omega$, $T/2<\tilde t<T$ and $0<\delta<\sqrt{T/2}$. 
For $r>0$, we write  
$B_r(\tilde x):= \{x\in \R^n; |x-\tilde x|<r\}$  and 
$\partial\Omega_r(\tilde x):= B_r(\tilde x) \cap \partial \Omega$. 
To handle $B_{(\tilde x,\tilde t)}$, we set  
\begin{equation}\label{eq:betardef}
	\beta(r) := 
	r^{\frac{2(p+1)}{p-1}-n-1} 
	\sup_{\substack{\tilde x\in \Omega, \\ 
	\dist(\tilde x,\partial\Omega)<r}}
	\int_{\tilde t-2r^2}^{\tilde t-r^2} 
	\int_{\partial \Omega_r(\tilde x)}
	|\nabla u(x,t)|^2 dS(x) dt
\end{equation}
for $0<r<\sqrt{\tilde t/2}$, 
where the idea of using this kind of quantity 
is due to Blatt--Struwe \cite[p. 2278]{BS15b}. 
We remark that the sign of $(2(p+1)/(p-1))-n-1$ 
has no relation to the following argument. 
Let us estimate $\beta$ by using $B^+_{(\tilde x,\tilde t)}$.

\begin{lemma}\label{lem:betabyB+}
There exist $0<\delta<\sqrt{T/2}$ depending only on $n$, $p$, $\Omega$ and $T$ 
and $C>0$ depending only on $n$, $p$ and $\Omega$ such that 
\[
\begin{aligned}
	\sup_{0<r<\delta/\sqrt{2}} \beta(r)
	&\leq 
	C \sup_{\tilde x\in \Omega} \int_{\tilde t-\delta^2}^{\tilde t} 
	B_{(\tilde x,\tilde t)}^+ (t) dt.  
\end{aligned}
\]
\end{lemma}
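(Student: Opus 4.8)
The plan is to relate the intrinsic boundary quantity $\beta(r)$, which measures $\int|\nabla u|^2\,dS\,dt$ over a piece of $\partial\Omega$ in a parabolic cylinder of scale $r$, to the weighted boundary integral $B^+_{(\tilde x,\tilde t)}$. The point of working with $B^+$ rather than $B$ is that only the portion of $\partial\Omega$ where $(x-\tilde x)\cdot\nu_x>0$ contributes, but on small scales near any boundary point this is essentially automatic: by the uniform $C^{2+\alpha}$ regularity of $\Omega$, if $\tilde x\in\Omega$ lies within distance $r$ of $\partial\Omega$ and $x\in\partial\Omega_{cr}(\tilde x)$ for a suitably small constant $c$, then the outward normal $\nu_x$ at $x$ satisfies $(x-\tilde x)\cdot\nu_x\geq c' r$ for some $c'>0$ depending only on $n$ and the geometry of $\Omega$; this is precisely the geometric input \eqref{eq:geom} alluded to after \eqref{eq:regu}. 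Thus $B^+_{(\tilde x,\tilde t)}$ sees, with a favorable weight $\gtrsim r$, the same boundary piece that $\beta(r)$ integrates over.

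First I would fix $r\in(0,\delta/\sqrt2)$ and a center $\tilde x\in\Omega$ with $\dist(\tilde x,\partial\Omega)<r$ achieving (almost) the supremum in \eqref{eq:betardef}. Choose $\tilde t$ so that the time interval $[\tilde t-2r^2,\tilde t-r^2]$ appearing in $\beta(r)$ is contained in $[\tilde t-\delta^2,\tilde t)$ — this holds since $2r^2<\delta^2$. Second, on that time interval and for $x\in\partial\Omega_r(\tilde x)$ I would bound the backward heat kernel from below: $|x-\tilde x|<r$ and $\tilde t-t\in[r^2,2r^2]$ give $K_{(\tilde x,\tilde t)}(x,t)=(\tilde t-t)^{-n/2}e^{-|x-\tilde x|^2/(4(\tilde t-t))}\geq (2r^2)^{-n/2}e^{-1/4}=c_n r^{-n}$. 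Third, combining the geometric lower bound $(x-\tilde x)\cdot\nu_x\geq c' r$ on $\partial\Omega_{cr}(\tilde x)$ with this kernel bound and the prefactor $(\tilde t-t)^{2/(p-1)}\geq r^{4/(p-1)}$ in the definition \eqref{eq:Bpmdef} of $B^+$, I get
\[
\int_{\tilde t-2r^2}^{\tilde t-r^2}\int_{\partial\Omega_{cr}(\tilde x)}|\nabla u|^2\,dS\,dt
\leq C\, r^{\,n+1-\frac{4}{p-1}}\int_{\tilde t-2r^2}^{\tilde t-r^2} B^+_{(\tilde x,\tilde t)}(t)\,dt
\leq C\, r^{\,n+1-\frac{4}{p-1}}\int_{\tilde t-\delta^2}^{\tilde t} B^+_{(\tilde x,\tilde t)}(t)\,dt,
\]
using $B^+\geq0$ to extend the time integral. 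Multiplying by $r^{2(p+1)/(p-1)-n-1}$ and noting $\tfrac{2(p+1)}{p-1}-\tfrac{4}{p-1}=2$ — wait, more carefully $\tfrac{2(p+1)}{p-1}-n-1+n+1-\tfrac{4}{p-1}=\tfrac{2(p+1)-4}{p-1}=2$ — so the powers of $r$ combine to $r^2$, which is bounded by $\delta^2\leq 1$ for $\delta$ small; this absorbs into the constant and yields the bound on scale $cr$ rather than $r$.

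The one remaining gap is that $\beta(r)$ integrates over $\partial\Omega_r(\tilde x)$ but the above controls only $\partial\Omega_{cr}(\tilde x)$. I would close this by a standard covering argument: the annular region $\partial\Omega_r(\tilde x)\setminus\partial\Omega_{cr}(\tilde x)$ — indeed all of $\partial\Omega_r(\tilde x)$ — can be covered by a bounded number $N=N(n,c)$ of balls $B_{cr}(x_j)$ with centers $x_j$; moving the center of scaling to points $\tilde x_j\in\Omega$ near each $x_j$ with $\dist(\tilde x_j,\partial\Omega)<cr$, applying the estimate above at each, and summing over $j$ gives $\beta(r)\leq C\sup_{\tilde x\in\Omega}\int_{\tilde t-\delta^2}^{\tilde t}B^+_{(\tilde x,\tilde t)}(t)\,dt$ after taking the supremum over $r$ and over the original centers. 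The main obstacle is the geometric lemma $(x-\tilde x)\cdot\nu_x\gtrsim r$: it requires the uniform interior/exterior ball (or $C^{2}$) estimate for $\partial\Omega$ to control how much the normal can tilt over a scale-$r$ boundary patch, and one must track that the constant $c$ shrinking the ball is chosen small enough (depending only on the curvature bound of $\partial\Omega$) to make this positive, which then fixes $\delta$ through the condition $r<\delta/\sqrt2$ and the requirement $\delta^2\leq1$.
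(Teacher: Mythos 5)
There is a genuine gap in the geometric step. You assert that if $\tilde x\in\Omega$ lies within distance $r$ of $\partial\Omega$ and $x\in\partial\Omega_{cr}(\tilde x)$, then $(x-\tilde x)\cdot\nu_x\geq c'r$. This is false, even when $\Omega$ is a half-space: take $\tilde x$ at distance $\eta\ll r$ from a flat boundary and $x$ the foot of the perpendicular; then $(x-\tilde x)\cdot\nu_x=\eta$, which has no lower bound proportional to $r$. The geometric input \eqref{eq:geom} gives only $(x-\tilde x)\cdot\nu_x\geq \dist(\tilde x,\partial\Omega)-C_\Omega|x-\tilde x|^2$, whose first term degenerates as $\tilde x$ approaches $\partial\Omega$, and the constraint in \eqref{eq:betardef} is merely $\dist(\tilde x,\partial\Omega)<r$, with no lower bound.

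The fix — which the paper implements — is to relocate the center of scaling \emph{away} from the boundary, not toward it: given $\tilde x_0$ with $\dist(\tilde x_0,\partial\Omega)<r$, one chooses a new center $\tilde x_1\in\Omega$ with $|\tilde x_1-\tilde x_0|\leq r$ and $\dist(\tilde x_1,\partial\Omega)=r$ (possible for $r$ small by uniform $C^2$ regularity, pushing inward along the inner normal). Then for $x\in\partial\Omega_r(\tilde x_0)\subset\partial\Omega_{2r}(\tilde x_1)$, the estimate \eqref{eq:geom} gives a lower bound with leading term $r$ and error $C_\Omega|x-\tilde x_1|^2\lesssim r^2$, which is absorbed for $r$ small; the kernel and time-weight lower bounds you wrote then go through as you intend. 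In your covering step you also prescribe $\dist(\tilde x_j,\partial\Omega)<cr$, which pushes the center the wrong way; the condition should keep $\dist(\tilde x_j,\partial\Omega)$ comparable to $r$ from \emph{below}. Aside from this sign error in the choice of new centers, your scaling arithmetic, kernel bound, time-interval inclusion, and the role of $\delta$ all match the paper's argument, and the covering idea is in fact unnecessary once the center is relocated properly, since the single relocated center already covers $\partial\Omega_r(\tilde x_0)$.
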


\begin{proof}
Since $\Omega$ is uniformly  $C^{2+\alpha}$ 
(in particular, uniformly  $C^2$), 
there exists $C_\Omega>0$ depending only on $\Omega$ such that 
\begin{equation}\label{eq:geom}
	(x-\tilde x) \cdot \nu_x 
	\geq \dist(\tilde x,\partial \Omega) - C_\Omega |x-\tilde x|^2
	\quad 
	\mbox{ for }x\in\partial \Omega, \; \tilde x\in \Omega. 
\end{equation}
See \cite[(4.1)]{BS15b} for the proof. In particular, 
\[
\begin{aligned}
	( (x-\tilde x) \cdot \nu_x)_+ 
	&\geq 
	\dist(\tilde x,\partial \Omega) - C_\Omega |x-\tilde x|^2 
	+ ( (x-\tilde x) \cdot \nu_x)_-  \\
	&\geq \dist(\tilde x,\partial \Omega) - C_\Omega |x-\tilde x|^2. 
\end{aligned}
\]

Let $0<r<\delta/\sqrt{2}$ and let 
$\tilde x_0\in \Omega$ satisfy $\dist(\tilde x_0, \partial\Omega) < r$.  
The above geometric observation together with \eqref{eq:Bpmdef} 
and shrinking the domain of integration shows that, 
for $\tilde x_1\in \Omega$ with 
$|\tilde x_1-\tilde x_0|\leq r$ and $\dist(\tilde x_1,\partial\Omega)=r$, 
\[
\begin{aligned}
	&\sup_{\tilde x\in \Omega} \int_{\tilde t-\delta^2}^{\tilde t} 
	B_{(\tilde x,\tilde t)}^+ (t) dt 
	\geq 
	\int_{\tilde t-\delta^2}^{\tilde t} 
	B_{(\tilde x_1,\tilde t)}^+ (t) dt  \\
	&\geq 
	\int_{\tilde t-2 r^2}^{\tilde t-r^2} 
	(\tilde t-t)^{\frac{2}{p-1}} 
	\int_{\partial \Omega_r(\tilde x_0)} 
	((x-\tilde x_1)\cdot \nu_x)_+ |\nabla u|^2 
	K_{(\tilde x_1,\tilde t)} dS(x) dt\\
	&\geq 
	\frac{1}{C} r^{\frac{4}{p-1}-n}
	\Bigg( 
	\int_{\tilde t-2r^2}^{\tilde t-r^2}  
	\int_{\partial \Omega_r(\tilde x_0)} 
	\dist(\tilde x_1,\partial \Omega) |\nabla u|^2 
	e^{-\frac{|x-\tilde x_1|^2}{4(\tilde t-t)}} 
	dS(x) dt \\
	&\quad 
	- C_\Omega \int_{\tilde t-2r^2}^{\tilde t-r^2}  
	\int_{\partial \Omega_r(\tilde x_0)} 
	\left( |x-\tilde x_1|^2 e^{-\frac{|x-\tilde x_1|^2}{4(\tilde t-t)}}
	\right) |\nabla u|^2   dS(x) dt \Bigg). 
\end{aligned}
\]
In the right-hand side, for the first integral, 
we use $\dist(\tilde x_1,\partial\Omega) = r$ and 
$|x-\tilde x_1|^2 \leq 4 r^2$ for $x\in \partial \Omega_r(\tilde x_0)$. 
For the second integral, we apply a fundamental inequality 
$\sup_{z\in \R^n} |z|^2 e^{-|z|^2/(4(\tilde t-t))}
\leq C'(\tilde t-t)$ with an absolute constant $C'>0$. 
Then, we see that 
\[
	\sup_{\tilde x\in \Omega} \int_{\tilde t-\delta^2}^{\tilde t} 
	B_{(\tilde x,\tilde t)}^+ (t) dt  
	\geq 
	r^{\frac{4}{p-1}-n+1}
	\left( \frac{1}{C''} - C'' r \right) 
	\int_{\tilde t-2r^2}^{\tilde t-r^2}  
	\int_{\partial \Omega_r(\tilde x_0)} 
	|\nabla u|^2 
	dS dt, 
\]
where $C''>1$ depends only on $n$, $p$ and $\Omega$ 
and the right-hand side is independent of $\tilde x_1$. 
Hence by $4/(p-1)=(2(p+1)/(p-1))-2$ 
and by choosing $0<\delta<\sqrt{T/2}$ so small that 
$(1/C'') - C'' r \geq  1/2C'$ holds for $0<r<\delta/\sqrt{2}$, 
\[
	r^{\frac{2(p+1)}{p-1}-n-1} 
	\int_{\tilde t-2r^2}^{\tilde t-r^2}  
	\int_{\partial \Omega_r(\tilde x_0)}  |\nabla u|^2 dS dt
	\leq 
	2C'' \sup_{\tilde x\in \Omega} \int_{\tilde t-\delta^2}^{\tilde t} 
	B_{(\tilde x,\tilde t)}^+ (t) dt 
\]
for $0<r<\delta/\sqrt{2}$ and 
$\tilde x_0\in \Omega$ with $\dist(\tilde x_0, \partial\Omega) < r$. 
Since the right-hand side is independent of $\tilde x_0$ and $r$, 
we see from \eqref{eq:betardef} that 
the lemma fllows. 
\end{proof}

We have estimated $\beta$ from above by using $B_{(\tilde x_0,\tilde t)}^+$. 
On the other hand, we estimate 
$\beta$ from below by using $B_{(\tilde x_0,\tilde t)}^-$.

\begin{lemma}\label{lem:B-bybeta}
Let $0<\delta<\sqrt{T/2}$ be as in Lemma \ref{lem:betabyB+}. 
Then there exists $C>0$ depending only on $n$, $p$ and $\Omega$ such that 
\[
	\sup_{\tilde x\in \Omega}\int_{\tilde t-\delta^2}^{\tilde t} 
	B_{(\tilde x,\tilde t)}^- (t) dt 
	\leq 
	C \delta \sup_{0<r<\delta/\sqrt{2}} \beta(r). 
\]
\end{lemma}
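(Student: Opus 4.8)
The plan is to bound the boundary integral $B^-_{(\tilde x,\tilde t)}$ by decomposing the surface $\partial\Omega$ into dyadic annular pieces centered at $\tilde x$ and controlling each piece by $\beta(r)$ evaluated at the corresponding dyadic scale. First I would fix $\tilde x\in\Omega$ and distinguish two cases according to the size of $\dist(\tilde x,\partial\Omega)$ relative to $\delta$. If $\dist(\tilde x,\partial\Omega)\geq c\delta$ for a suitable constant, then on the time interval $(\tilde t-\delta^2,\tilde t)$ the backward heat kernel $K_{(\tilde x,\tilde t)}(x,t)$ is exponentially small on all of $\partial\Omega$, so $\int_{\tilde t-\delta^2}^{\tilde t} B^-_{(\tilde x,\tilde t)}(t)\,dt$ is negligible and is trivially dominated by $C\delta\sup_r\beta(r)$ (here one also uses \eqref{eq:regu} to bound $|\nabla u|^2$ near the boundary away from blow-up, or simply observes the exponential factor beats any polynomial growth). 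The substantive case is $\dist(\tilde x,\partial\Omega)<c\delta$.

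In the substantive case, I would split $\partial\Omega$ into the annuli $A_j:=\{x\in\partial\Omega:2^{j-1}r_0\leq|x-\tilde x|<2^j r_0\}$ for a base radius $r_0\sim\dist(\tilde x,\partial\Omega)$ (plus the innermost ball $|x-\tilde x|<r_0$), and split the time interval $(\tilde t-\delta^2,\tilde t)$ into dyadic pieces $(\tilde t-2^{2k+1}s^2,\tilde t-2^{2k}s^2)$ matched to the spatial scale. On the annulus at scale $r=2^j r_0$, one has $((x-\tilde x)\cdot\nu_x)_- \leq |x-\tilde x|\leq 2^j r_0 = r$ simply by Cauchy--Schwarz, and the kernel satisfies $(\tilde t-t)^{2/(p-1)}K_{(\tilde x,\tilde t)}(x,t)\lesssim (\tilde t-t)^{2/(p-1)-n/2}e^{-c r^2/(\tilde t-t)}$. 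Integrating against $|\nabla u|^2$ over $\partial\Omega_r(\tilde x)$ and over the time interval of length $\sim r^2$ matched to this scale produces exactly the combination $r^{2(p+1)/(p-1)-n-1}\int\!\!\int|\nabla u|^2\,dS\,dt$ that appears in \eqref{eq:betardef}, hence is bounded by $r\cdot\beta(r)$ (the extra power $r$ coming from $((x-\tilde x)\cdot\nu_x)_-\leq r$ together with the exponent bookkeeping: $2/(p-1)-n/2$ from the kernel prefactor, $n/2$ from rescaling the area element, and $+1$ from the time-interval length minus the $-(n+1)$ in $\beta$). Summing the geometric series $\sum_j 2^j r_0 \lesssim \delta$ over the finitely many scales with $2^j r_0\lesssim\delta$, and likewise summing the time-dyadic pieces using the exponential decay $e^{-c r^2/(\tilde t-t)}$ to make the off-diagonal (non-matched time scale) contributions summable, yields the bound $C\delta\sup_{0<r<\delta/\sqrt2}\beta(r)$.

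I expect the main obstacle to be the careful bookkeeping of the exponents and, more delicately, handling the mismatch between spatial dyadic scales and temporal dyadic scales: for a fixed annulus at spatial scale $r$, the kernel is only comparable to its maximal size when $\tilde t-t\sim r^2$, and for $\tilde t-t$ much smaller or much larger than $r^2$ one must exploit the Gaussian factor $e^{-cr^2/(\tilde t-t)}$ (small time) or the prefactor decay $(\tilde t-t)^{2/(p-1)-n/2}$ balanced against the region where $|\nabla u|$ is being measured (large time) to recover summability. One also needs $\dist(\tilde x_0,\partial\Omega)<r$ to be satisfied by the centers used in the definition of $\beta(r)$; since the relevant annuli are concentrated near $\partial\Omega$ when $\dist(\tilde x,\partial\Omega)$ is small, one can replace $\tilde x$ by a nearby boundary-adapted center $\tilde x_0$ with $\dist(\tilde x_0,\partial\Omega)<r$ at each scale, at the cost of harmless geometric constants depending only on the uniform $C^2$ character of $\Omega$, exactly as in the proof of Lemma \ref{lem:betabyB+}. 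Once these scale-matching estimates are in place, the sum over dyadic scales is a convergent geometric series and the factor $\delta$ emerges as claimed.
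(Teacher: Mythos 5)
Your high-level plan (dyadic decomposition in both space and time at matched scales, covering by balls of the right radius, controlling each piece by $\beta(r)$, summing a geometric series) is the same as the paper's, but there is a genuine and fatal gap in the geometric bound on $((x-\tilde x)\cdot\nu_x)_-$.

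You bound $((x-\tilde x)\cdot\nu_x)_- \leq |x-\tilde x|$ ``simply by Cauchy--Schwarz.'' This linear bound does not produce the extra power of $r$ that you claim. Track the exponents: on the annulus at scale $r$ with matched time scale $\tilde t-t\sim r^2$, the factor $(\tilde t-t)^{2/(p-1)}K_{(\tilde x,\tilde t)}$ contributes $r^{4/(p-1)-n}$, and the linear bound on the normal component contributes one extra $r$. The total prefactor is $r^{4/(p-1)-n+1}=r^{2(p+1)/(p-1)-n-1}$, which is \emph{exactly} the normalization appearing in $\beta$, so the contribution at scale $r$ is $\lesssim\beta(r)$, \emph{not} $r\,\beta(r)$. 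There is no leftover power of $r$, the dyadic sum $\sum_j\beta(2^{-j}\delta)$ does not converge in general, and the claimed factor $\delta$ does not emerge. Your own exponent bookkeeping (``$n/2$ from rescaling the area element'') is internally confused precisely because no such rescaling occurs.

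The missing ingredient is the geometric inequality coming from the uniform $C^2$ regularity of $\partial\Omega$, namely \eqref{eq:geom}, which gives the \emph{quadratic} bound $((x-\tilde x)\cdot\nu_x)_- \leq C_\Omega|x-\tilde x|^2$. This is the heart of the argument: $B^-$ picks up only the non-convex directions, and non-convexity of a $C^2$ boundary is a second-order effect controlled by curvature. With the quadratic bound, the contribution at scale $r$ becomes $r^{4/(p-1)-n+2}=r\cdot r^{2(p+1)/(p-1)-n-1}$, i.e.\ $r\,\beta(r)$, the geometric series converges, and the factor $\delta$ appears. Concretely, the paper's proof first uses \eqref{eq:geom} together with $\sup_z|z|^2e^{-|z|^2/(8(\tilde t-t))}\lesssim(\tilde t-t)$ to upgrade the time prefactor from $(\tilde t-t)^{2/(p-1)-n/2}$ to $(\tilde t-t)^{2/(p-1)-n/2+1}$, and then performs the double dyadic decomposition. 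Your case split on $\dist(\tilde x,\partial\Omega)$ versus $\delta$ is also unnecessary: the constraint $\dist(\tilde x_0,\partial\Omega)<r$ in the definition of $\beta$ is automatically respected because balls $B_r$ that do not meet $\partial\Omega$ simply contribute zero to the surface integral, and the Gaussian factor then handles summability over the annuli.
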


\begin{proof}
By \eqref{eq:geom}, if $( (x-\tilde x) \cdot \nu_x)_- >0$, then 
$( (x-\tilde x) \cdot \nu_x)_+ =0$ and  
\[
	( (x-\tilde x) \cdot \nu_x)_- 
	\leq 
	- \dist(\tilde x,\partial \Omega) + C_\Omega |x-\tilde x|^2 
	+ ( (x-\tilde x) \cdot \nu_x)_+ 
	\leq C_\Omega |x-\tilde x|^2. 
\]
Thus, we have 
\begin{equation}\label{eq:bminus}
	( (x-\tilde x) \cdot \nu_x)_- \leq C_\Omega |x-\tilde x|^2 
	\quad 
	\mbox{ for }x\in\partial \Omega, \; \tilde x\in \Omega. 
\end{equation}
Let $\tilde x\in \Omega$ and let 
$0<\delta<\sqrt{T/2}$ be as in Lemma \ref{lem:betabyB+}. 
Set $\delta_i:=2^{-(i-1)/2}\delta$ for $i\geq 1$. 
Then, \eqref{eq:bminus} implies that 
\[
\begin{aligned}
	&\int_{\tilde t-\delta^2}^{\tilde t} 
	B_{(\tilde x,\tilde t)}^- (t) dt  \\
	&\leq 
	C_\Omega \int_{\tilde t-\delta^2}^{\tilde t} 
	(\tilde t-t)^{\frac{2}{p-1}-\frac{n}{2}} 
	\int_{\partial \Omega} 
	\left( |x-\tilde x|^2 
	e^{-\frac{|x-\tilde x|^2}{8(\tilde t-t)}} \right) |\nabla u|^2 
	e^{-\frac{|x-\tilde x|^2}{8(\tilde t-t)}} 
	dS dt \\
	&\leq 
	C \int_{\tilde t-\delta^2}^{\tilde t} 
	(\tilde t-t)^{\frac{2}{p-1}-\frac{n}{2}+1} 
	\int_{\partial \Omega} 
	|\nabla u|^2 e^{-\frac{|x-\tilde x|^2}{8(\tilde t-t)}} dS dt. 
\end{aligned}
\]

Let us decompose $\partial\Omega\times (\tilde t-\delta^2,\tilde t)$ into 
\[
	\partial\Omega \times (\tilde t-\delta^2,\tilde t) 
	= \bigcup_{i=1}^\infty 
	\left( \partial\Omega \times (\tilde t-\delta_i^2, \tilde t-\delta_{i+1}^2] 
	\right). 
\]
We write 
$\partial A_r(\tilde x):= \partial\Omega_{2r}(\tilde x) 
\setminus \partial\Omega_r(\tilde x)$ for $r>0$, 
where $\partial\Omega_r(\tilde x)= B_r(\tilde x) \cap \partial \Omega$. 
Then, for each $i\geq1$, we also decompose 
$\partial\Omega\times (\tilde t-\delta_i^2, \tilde t-\delta_{i+1}^2]$ into 
\[
\begin{aligned}
	&\partial\Omega \times (\tilde t-\delta_i^2, \tilde t-\delta_{i+1}^2] \\
	&= 
	\left( \partial\Omega_{\delta_{i+2}}(\tilde x)
	\cup \left( \bigcup_{j=0}^\infty 
	\partial\Omega_{2\times 2^j\delta_{i+2}}(\tilde x)
	\setminus \partial\Omega_{2^j \delta_{i+2}}(\tilde x) \right) \right) 
	\times (\tilde t-\delta_i^2, \tilde t-\delta_{i+1}^2] \\
	&= 
	\left( \partial\Omega_{\delta_{i+2}}(\tilde x) 
	\times (\tilde t-\delta_i^2, \tilde t-\delta_{i+1}^2] \right)
	\cup \left( \bigcup_{j=0}^\infty 
	A_{2^j\delta_{i+2}}(\tilde x) 
	\times (\tilde t-\delta_i^2, \tilde t-\delta_{i+1}^2] \right). 
\end{aligned}
\]
From these decompositions, 
it follows that 
\[
\begin{aligned}
	&\int_{\tilde t-\delta^2}^{\tilde t} 
	(\tilde t-t)^{\frac{2}{p-1}-\frac{n}{2}+1} 
	\int_{\partial \Omega} 
	|\nabla u|^2 e^{-\frac{|x-\tilde x|^2}{8(\tilde t-t)}} dS dt \\
	&= 
	\sum_{i=1}^\infty 
	\int_{\tilde t-\delta_i^2}^{\tilde t-\delta_{i+1}^2} 
	(\tilde t-t)^{\frac{2}{p-1}-\frac{n}{2}+1} 
	\int_{\partial \Omega_{\delta_{i+2}}(\tilde x)} 
	|\nabla u|^2 e^{-\frac{|x-\tilde x|^2}{8(\tilde t-t)}} dS dt \\
	&\quad + 
	\sum_{i=1}^\infty 
	\int_{\tilde t-\delta_i^2}^{\tilde t-\delta_{i+1}^2} 
	(\tilde t-t)^{\frac{2}{p-1}-\frac{n}{2}+1} 
	\sum_{j=0}^\infty
	\int_{\partial A_{2^j \delta_{i+2}}(\tilde x)} 
	|\nabla u|^2 e^{-\frac{|x-\tilde x|^2}{8(\tilde t-t)}} dS dt \\
	&=: \sum_{i=1}^\infty I_i
	+ \sum_{i=1}^\infty \sum_{j=0}^\infty J_{i,j}. 
\end{aligned}
\]

Let us consider $I_i$. 
By $\delta_{i+2}\leq \delta_{i+1}$ and 
$\delta_i=\sqrt{2}\delta_{i+1}$, 
\[
\begin{aligned}
	&I_i \leq 
	(\delta_i)^{\frac{4}{p-1}+2} (\delta_{i+1})^{-n} 
	\int_{\tilde t-\delta_i^2}^{\tilde t-\delta_{i+1}^2} 
	\int_{\partial \Omega_{\delta_{i+1}}(\tilde x)} |\nabla u|^2  dS dt \\
	&\leq 
	C \delta_{i+1} (\delta_{i+1})^{\frac{2(p+1)}{p-1}-n-1} 
	\int_{\tilde t-2\delta_{i+1}^2}^{\tilde t - \delta_{i+1}^2} 
	\int_{\partial \Omega_{\delta_{i+1}}(\tilde x)} |\nabla u|^2  dS dt 
\end{aligned}
\]
for all $\tilde x\in\Omega$.  We observe that 
if $\dist(\tilde x,\partial \Omega)\geq \delta_{i+1}$, 
then $\partial \Omega_{\delta_{i+1}}(\tilde x)=
B_{\delta_{i+1}}(\tilde x) \cap \partial\Omega = \emptyset$ and $I_i=0$. 
Thus, the definition of $\beta$ in \eqref{eq:betardef} yields 
\[
\begin{aligned}
	I_i \leq 
	\sup_{\tilde x\in \Omega} I_i 
	= 
	\sup_{\substack{\tilde x\in \Omega, \\
	\dist(\tilde x,\partial\Omega)<\delta_{i+1}}} I_i 
	\leq 
	C \delta_{i+1} \beta(\delta_{i+1}). 
\end{aligned}
\]
Then by $\delta_{i+1}\leq \delta/\sqrt{2}$ and 
$\delta_{i+1}=2^{-i/2}\delta$, 
\[
\begin{aligned}
	\sum_{i=1}^\infty I_i &\leq 
	C \sup_{0<r<\delta/\sqrt{2}} \beta(r) 
	\sum_{i=1}^\infty 2^{-\frac{i}{2}} \delta 
	\leq 
	C \delta  \sup_{0<r<\delta/\sqrt{2}} \beta(r). 
\end{aligned}
\]

As for $J_{i,j}$, 
we note that for 
$x\in \partial A_{2^j \delta_{i+2}}(\tilde x)$
and $\tilde t-\delta_i^2 < t < \tilde t-\delta_{i+1}^2$, 
\[
	e^{-\frac{|x-\tilde x|^2}{8(\tilde t-t)}}
	\leq 
	e^{-\frac{(2^j \delta_{i+2})^2}{8\delta_i^2}} 
	= 
	e^{-\frac{2^{2j-1} \delta_{i+1}^2}{16\delta_{i+1}^2}} 
	= e^{-2^{2j-5} }. 
\]
Thus, 
\[
\begin{aligned}
	J_{i,j}&\leq 
	(\delta_i)^{\frac{4}{p-1}+2} (\delta_{i+1})^{-n}  e^{-2^{2j-5} }
	\int_{\tilde t-\delta_i^2}^{\tilde t-\delta_{i+1}^2} 
	\int_{\partial A_{2^j \delta_{i+2}}(\tilde x)} 
	|\nabla u|^2 dS dt \\
	&\leq 
	C\delta_{i+1} e^{-2^{2j-5} }
	(\delta_{i+1})^{\frac{2(p+1)}{p-1}-n-1} 
	\int_{\tilde t-2\delta_{i+1}^2}^{\tilde t-\delta_{i+1}^2} 
	\int_{\partial A_{2^j \delta_{i+2}}(\tilde x)} 
	|\nabla u|^2 dS dt. 
\end{aligned}
\]
We consider a covering of $\partial A_{2^j \delta_{i+2}}(\tilde x)$ 
by using balls of radius $\delta_{i+2}$. 
Observe that 
$\partial A_{2^j \delta_{i+2}}(\tilde x) 
\subset B_{2^{j+1} \delta_{i+2}}(\tilde x)$ 
and that $B_{2^{j+1} \delta_{i+2}}(\tilde x)$ can be covered by 
$C_n 2^{n(j+1)}$ balls 
of radius $\delta_{i+2}$, where $C_n>0$ is a constant depending only on $n$. 
Therefore, $\partial A_{2^j \delta_{i+2}}(\tilde x)$ 
also can be covered by a family of balls 
$\{ B_{\delta_{i+2}}(x_{i,j}^{(l)}) \}_{l=1}^{N_j}$, 
where $x_{i,j}^{(l)}\in \partial \Omega$, $N_j \leq 2^{nj} C_n'$ 
and $C_n'>0$ depends only on $n$. 
Then, we see that 
\[
	J_{i,j}\leq 
	C\delta_{i+1} e^{-2^{2j-5} }
	\sum_{l=1}^{N_j} 
	(\delta_{i+1})^{\frac{2(p+1)}{p-1}-n-1} 
	\int_{\tilde t-2\delta_{i+1}^2}^{\tilde t-\delta_{i+1}^2} 
	\int_{
	B_{\delta_{i+2}}(x_{i,j}^{(l)}) \cap \partial \Omega
	 }
	|\nabla u|^2 dS dt. 
\]
For each $x_{i,j}^{(l)}\in \partial \Omega$, we choose 
$\tilde x_{i,j}^{(l)}\in \Omega$ such that 
\[
	\dist(\tilde x_{i,j}^{(l)}, \partial \Omega)  
	= |x_{i,j}^{(l)} - \tilde x_{i,j}^{(l)}|
	\leq  \frac{\sqrt{2}-1}{\sqrt{2}} \delta_{i+1}. 
\]
We note that this choice guarantees that 
$B_{\delta_{i+2}}(x_{i,j}^{(l)}) 
\subset B_{\delta_{i+1}}(\tilde x_{i,j}^{(l)})$. 
Then, 
\[
\begin{aligned}
	J_{i,j}
	&\leq 
	C\delta_{i+1} e^{-2^{2j-5} }
	\sum_{l=1}^{N_j} 
	(\delta_{i+1})^{\frac{2(p+1)}{p-1}-n-1} 
	\int_{\tilde t-2\delta_{i+1}^2}^{\tilde t-\delta_{i+1}^2} 
	\int_{
	B_{\delta_{i+1}}(\tilde x_{i,j}^{(l)}) \cap \partial \Omega
	} 
	|\nabla u|^2 dS dt \\
	&\leq 
	C\delta_{i+1} e^{-2^{2j-5} }
	\sum_{l=1}^{N_j} 
	\beta(\delta_{i+1}) 
	\leq 
	C\delta_{i+1} e^{-2^{2j-5} }
	2^{n j}
	\beta(\delta_{i+1}), 
\end{aligned}
\]
and so 
\[
\begin{aligned}
	\sum_{i=1}^\infty \sum_{j=0}^\infty J_{i,j}
	&\leq 
	C \sum_{i=1}^\infty \sum_{j=0}^\infty 
	\delta_{i+1} e^{-2^{2j-5} }
	2^{n j}
	\beta(\delta_{i+1}) \\
	&\leq 
	C \sum_{i=1}^\infty \delta_{i+1} \beta(\delta_{i+1}) 
	\leq C \delta  \sup_{0<r<\delta/\sqrt{2}} \beta(r). 
\end{aligned}
\]

From the above estimates, it follows that 
\[
	\int_{\tilde t-\delta^2}^{\tilde t} 
	B_{(\tilde x,\tilde t)}^- (t) dt 
	\leq 
	C \delta  \sup_{0<r<\delta/\sqrt{2}} \beta(r) 
	\quad 
	\mbox{ for all }\tilde x\in\Omega, 
\]
where  
$C>0$ depends only on $n$, $p$ and $\Omega$. 
Since the right-hand side is independent of 
$\tilde x$, 
taking the supremum in $\tilde x$ 
yields the desired inequality. 
\end{proof}

We are now in a position to prove Proposition \ref{pro:qmono}. 

\begin{proof}[Proof of Proposition \ref{pro:qmono}]
We first consider the interior case $\tilde x\in\Omega$. 
Let $T/2<\tilde t<T$ and let 
$0<\delta<\sqrt{T/2}$ be as in Lemma \ref{lem:betabyB+}. 
By \eqref{eq:boudel2} and Lemma \ref{lem:B-bybeta}, we have 
\begin{equation}\label{eq:betaBsupE111}
\begin{aligned}
	E_{(\tilde x,\tilde t)}(t)
	&\leq 
	E_{(\tilde x,\tilde t)}(t') 
	+ \frac{1}{4} \sup_{\tilde x\in\Omega}\int_{\tilde t-\delta^2}^{\tilde t} 
	B_{(\tilde x,\tilde t)}^- (t) dt \\
	&\leq 
	E_{(\tilde x,\tilde t)}(t') 
	+ C \delta \sup_{0<r<\delta/\sqrt{2}} \beta(r) 
\end{aligned}
\end{equation}
for $\tilde t-\delta^2 \leq t' < t< \tilde t$, 
where $C>0$ depends only on $n$, $p$ and $\Omega$. 
Then, it suffices to estimate $\sup_r \beta(r)$. 
From Lemma \ref{lem:betabyB+}, 
$B_{(\tilde x,\tilde t)}^+ 
= B_{(\tilde x,\tilde t)} + B_{(\tilde x,\tilde t)}^-$ 
and Lemma \ref{lem:B-bybeta}, it follows that 
\[
\begin{aligned}
	\sup_{0<r<\delta/\sqrt{2}} \beta(r)
	&\leq 
	C \sup_{\tilde x\in \Omega} \int_{\tilde t-\delta^2}^{\tilde t} 
	B_{(\tilde x,\tilde t)}^+ (t) dt \\
	&\leq 
	C \sup_{\tilde x\in \Omega} \int_{\tilde t-\delta^2}^{\tilde t} 
	B_{(\tilde x,\tilde t)} (t) dt
	+ C \sup_{\tilde x\in \Omega} \int_{\tilde t-\delta^2}^{\tilde t} 
	B_{(\tilde x,\tilde t)}^- (t) dt \\
	&\leq 
	C \sup_{\tilde x\in \Omega} \int_{\tilde t-\delta^2}^{\tilde t} 
	B_{(\tilde x,\tilde t)} (t) dt
	+ C \delta \sup_{0<r<\delta/\sqrt{2}} \beta(r). 
\end{aligned}
\]
By choosing $0<\delta<\sqrt{T/2}$ sufficiently small 
depending only on $n$, $p$, $\Omega$ and $T$,  
and then by \eqref{eq:boudel1}, 
\begin{equation}\label{eq:betaBsupE}
\begin{aligned}
	\sup_{0<r<\delta/\sqrt{2}} \beta(r)
	\leq 
	C \sup_{\tilde x\in \Omega} \int_{\tilde t-\delta^2}^{\tilde t} 
	B_{(\tilde x,\tilde t)} (t) dt 
	\leq 
	C \sup_{\tilde x\in \Omega} E_{(\tilde x,\tilde t)}(\tilde t-\delta^2). 
\end{aligned}
\end{equation}
This together with \eqref{eq:betaBsupE111}
shows that for $T/2<\tilde t<T$ and  
$\tilde t-\delta^2 \leq t' < t< \tilde t$, 
\[
	E_{(\tilde x,\tilde t)}(t) 
	\leq 
	E_{(\tilde x,\tilde t)}(t') 
	+ C \delta \sup_{\tilde x\in \Omega} E_{(\tilde x,\tilde t)}(\tilde t-\delta^2). 
\]

By the bound of 
$E_{(\tilde x,\tilde t)}(\tilde t-\delta^2)$ in \eqref{eq:Ebound42} 
and by $T/2<\tilde t<T$, 
\begin{equation}\label{eq:tiltbddEpr}
\begin{aligned}
	\sup_{\tilde x\in \Omega} E_{(\tilde x,\tilde t)}(\tilde t-\delta^2)  
	&\leq 
	\frac{1}{2} (4\pi)^{\frac{n}{2}}\delta^{\frac{2(p+1)}{p-1}} 
	\|\nabla u(\cdot,\tilde t-\delta^2)\|_{L^\infty(\Omega)}^2 \\
	& \quad
	+ \frac{1}{2(p-1)}(4\pi)^{\frac{n}{2}}\delta^{\frac{2(p+1)}{p-1}-2} 
	\|u(\cdot,\tilde t-\delta^2)\|_{L^\infty(\Omega)}^2\\
	&\leq 
	\frac{1}{2} (4\pi)^{\frac{n}{2}}\delta^{\frac{2(p+1)}{p-1}} 
	\|\nabla u\|_{L^\infty(\Omega\times (\frac{T}{2}-\delta^2, T-\delta^2))}^2  \\
	& \quad
	+ \frac{1}{2(p-1)}(4\pi)^{\frac{n}{2}}\delta^{\frac{2(p+1)}{p-1}-2} 
	\|u\|_{L^\infty(\Omega\times (\frac{T}{2}-\delta^2, T-\delta^2))}^2, 
\end{aligned}
\end{equation}
where the right-hand side is independent of $\tilde t$. 
Thus, the estimate 
\[
\begin{aligned}
	E_{(\tilde x,\tilde t)}(t) 
	&\leq 
	E_{(\tilde x,\tilde t)}(t')  
	+ \tilde C \delta^{\frac{2(p+1)}{p-1}+1}  
	\|\nabla u\|_{L^\infty(\Omega\times (\frac{T}{2}-\delta^2, T-\delta^2))}^2 \\
	&\quad + \tilde C \delta^{\frac{2(p+1)}{p-1}-1} 
	\|u\|_{L^\infty(\Omega\times (\frac{T}{2}-\delta^2, T-\delta^2))}^2 
\end{aligned}
\]
holds for $T/2<\tilde t<T$ and  
$\tilde t-\delta^2 \leq t' < t< \tilde t$, 
where $\tilde C>0$ depends only on $n$, $p$ and $\Omega$. 
To take the limit $\tilde t\to T$, 
we set $T-\delta^2<t_0<T$ and $T-\delta^2\leq t'<t<t_0$. 
Then by the Lebesgue dominated convergence theorem 
with the help of the boundedness in \eqref{eq:regu} and 
$e^{-|x-\tilde x|^2/(4(\tilde t-t))} 
\leq e^{-|x-\tilde x|^2/(4T)}$, 
we see that 
$\lim_{\tilde t \to T}E_{(\tilde x,\tilde t)}(t)
= E_{(\tilde x,T)}(t)$. 
Thus, the desired estimate \eqref{eq:qmonoes} 
holds for all 
$T-\delta^2\leq t'<t<t_0<T$. 
Since $T-\delta^2<t_0<T$ is arbitrary, 
\eqref{eq:qmonoes} also holds 
for all $T-\delta^2\leq t'<t<T$ 
in the interior case $\tilde x\in \Omega$.

As for the boundary case $\tilde x\in \partial \Omega$, 
let $\tilde x'\in B_1(\tilde x)\cap \Omega$. Since we have  
\begin{equation}\label{eq:qmonotilxdash}
\begin{aligned}
	E_{(\tilde x',T)}(t) 
	&\leq 
	E_{(\tilde x',T)}(t')  
	+ C \delta^{\frac{2(p+1)}{p-1}+1}  
	\|\nabla u\|_{L^\infty(\Omega\times (\frac{T}{2}-\delta^2, T-\delta^2))}^2 \\
	&\quad + C \delta^{\frac{2(p+1)}{p-1}-1} 
	\|u\|_{L^\infty(\Omega\times (\frac{T}{2}-\delta^2, T-\delta^2))}^2 
\end{aligned}
\end{equation}
for $T-\delta^2 \leq t' < t < T$, 
where the second and third terms in the right-hand side 
are independent of the choice of $\tilde x'$. 
Note that for $\tilde x'\in B_1(\tilde x)\cap \Omega$, 
\[
	e^{-\frac{|x-\tilde x'|^2}{4(\tilde t-t)}}
	\leq e^{-\frac{|x-\tilde x'|^2}{4T}} 
	\leq e^{-\frac{|x-\tilde x|^2}{8T}}
	e^{\frac{|\tilde x-\tilde x'|^2}{4T}} 
	\leq 
	e^{\frac{1}{4T}} 
	e^{-\frac{|x-\tilde x|^2}{8T}}. 
\]
Then by the Lebesgue dominated convergence theorem, 
letting $\tilde x'\to \tilde x$ in \eqref{eq:qmonotilxdash} 
shows \eqref{eq:qmonoes} 
in the boundary case $\tilde x\in \partial \Omega$. 
The proof is complete. 
\end{proof}

\begin{remark}
By slightly modifying the above argument, 
we can change the center of scaling in time. 
Namely, the following can be proved for all $p>1$:  
There exists $C>0$ depending only on $n$, $p$ and $\Omega$ such that 
\[
\begin{aligned}
	E_{(\tilde x,\tilde t)}(t) 
	&\leq 
	E_{(\tilde x,\tilde t)}(t')  
	+ C \delta^{\frac{2(p+1)}{p-1}+1}  
	\|\nabla u\|_{L^\infty(\Omega\times (\frac{T}{2}-\delta^2, T-\delta^2))}^2 \\
	&\quad + C \delta^{\frac{2(p+1)}{p-1}-1} 
	\|u\|_{L^\infty(\Omega\times (\frac{T}{2}-\delta^2, T-\delta^2))}^2 
\end{aligned}
\]
for all $\tilde x\in \overline{\Omega}$, 
$T/2 < \tilde t \leq T$, 
$\tilde t-\delta^2 \leq t' < t < \tilde t$, 
where $0<\delta<\sqrt{T/2}$ depends only on $n$, $p$, $\Omega$ and $T$. 
This kind of estimate is expected to be useful in the characterization of blow-up. 
\end{remark}

\section{Estimates via quasi-monotonicity}\label{sec:esti}
In this section, we give several estimates 
by using the quasi-monotonicity formula. 
We remark that all the results 
in this section are valid for all $p>1$. 
For later use, we rewrite Proposition \ref{pro:qmono} 
in the backward similarity variables. 
To do so, we fix $0<\delta<1$ and $\tilde C>0$ in Proposition \ref{pro:qmono} 
and we set 
\begin{equation}\label{eq:stars}
\left\{ 
\begin{aligned}
	&\tau_*:= -\log(\delta^2), \\
	&\begin{aligned}
	C_*&:= \tilde C \delta^{\frac{2(p+1)}{p-1}+1}  
	\|\nabla u\|_{L^\infty(\Omega\times (\frac{T}{2}-\delta^2, T-\delta^2))}^2 \\
	&\quad + \tilde C \delta^{\frac{2(p+1)}{p-1}-1} 
	\|u\|_{L^\infty(\Omega\times (\frac{T}{2}-\delta^2, T-\delta^2))}^2, 
	\end{aligned}\\
	&\begin{aligned}
	C_*'&:= \frac{1}{2} (4\pi)^{\frac{n}{2}} \delta^{\frac{2(p+1)}{p-1}} 
	\|\nabla u(\cdot,T-\delta^2)\|_{L^\infty(\Omega)}^2   \\
	&\quad 
	+ \frac{1}{2(p-1)} (4\pi)^{\frac{n}{2}} \delta^{\frac{2(p+1)}{p-1}-2} 
	\|u(\cdot,T-\delta^2)\|_{L^\infty(\Omega)}^2, 
	\end{aligned}
\end{aligned}
\right.
\end{equation}
where these constants are independent of 
$\tilde x\in\overline{\Omega}$. 
Then, the following holds:

\begin{proposition}\label{pro:qm}
Let $\tau_*$ and $C_*$ be the constants given in \eqref{eq:stars}. 
Then, 
\[
	\cE_{(\tilde x,T)}(\tau)
	\leq 
	\cE_{(\tilde x,T)}(\tau') 
	+ C_* 
	\quad 
	\mbox{ for } \tilde x\in \overline{\Omega}, \; 
	\tau_* \leq \tau' < \tau < \infty, 
\]
where $\tau_*$ and $C_*$ are  independent 
of $\tilde x$. 
\end{proposition}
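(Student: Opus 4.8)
The plan is to obtain Proposition \ref{pro:qm} as a direct reformulation of Proposition \ref{pro:qmono} under the change of variables $\tau=-\log(T-t)$ connecting $E_{(\tilde x,T)}$ and $\cE_{(\tilde x,T)}$.

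First I would recall the identity \eqref{eq:EcErel}: $E_{(\tilde x,T)}(t)=\cE_{(\tilde x,T)}(\tau)$ whenever $\tau=-\log(T-t)$. The map $t\mapsto -\log(T-t)$ is a strictly increasing bijection of $(0,T)$ onto $\R$; in particular the value $t=T-\delta^2$ corresponds to $\tau=-\log(\delta^2)=\tau_*$, and $t\to T^-$ corresponds to $\tau\to\infty$. Consequently, the range $\tau_*\leq\tau'<\tau<\infty$ appearing in the statement is exactly the image under this map of the range $T-\delta^2\leq t'<t<T$ used in Proposition \ref{pro:qmono}.

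Given $\tau_*\leq\tau'<\tau<\infty$, I would then set $t:=T-e^{-\tau}$ and $t':=T-e^{-\tau'}$, so that $T-\delta^2\leq t'<t<T$, and apply Proposition \ref{pro:qmono} (in the case $\tilde t=T$, which is exactly how that proposition is stated) to these $t$, $t'$. This yields \eqref{eq:qmonoes}; by construction the sum of the last two terms on the right-hand side of \eqref{eq:qmonoes} is precisely the constant $C_*$ in \eqref{eq:stars}. Rewriting both sides of the resulting inequality by means of \eqref{eq:EcErel} then gives $\cE_{(\tilde x,T)}(\tau)\leq\cE_{(\tilde x,T)}(\tau')+C_*$, which is the claim; the constants $\tau_*$ and $C_*$ are independent of $\tilde x$ since $\delta$, $\tilde C$ and the norms on the right-hand side of \eqref{eq:stars} do not involve $\tilde x$.

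There is essentially no obstacle here: the proposition is a notational translation of Proposition \ref{pro:qmono}, and the quantities $\tau_*$, $C_*$ in \eqref{eq:stars} were defined so as to absorb its error terms. The only point to verify is that the fixed $\delta$ is admissible in Proposition \ref{pro:qmono}, i.e.\ that $0<\delta<\sqrt{T/2}$ together with the smallness required there is compatible with the extra normalization $0<\delta<1$; since reducing $\delta$ only tightens the hypotheses of Proposition \ref{pro:qmono}, choosing $\delta$ below $\min\{1,\sqrt{T/2}\}$ and below the threshold from that proposition settles this.
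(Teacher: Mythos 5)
Your proposal is correct and is exactly what the paper does: the paper's proof is the single sentence ``This immediately follows from Proposition \ref{pro:qmono} and \eqref{eq:EcErel},'' and you have simply spelled out the change of variables $\tau=-\log(T-t)$ and the matching of $\tau_*$ and $C_*$ with the quantities in \eqref{eq:qmonoes}. Nothing further is needed.
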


\begin{proof}
This immediately follows from Proposition \ref{pro:qmono} and \eqref{eq:EcErel}. 
\end{proof}

As the first application of Proposition \ref{pro:qm}, 
we give the following 
upper and lower bound of $\cE_{(\tilde x,T)}$:

\begin{lemma}\label{lem:GKul}
Let $\tau_*$, $C_*$ and $C_*'$ be the constants given in \eqref{eq:stars}. 
Then, 
\[
	- C_* 
	\leq 
	\cE_{(\tilde x,T)}(\tau)
	\leq 
	C_* + C_*' 
	\quad 
	\mbox{ for } \tilde x\in \overline{\Omega},  \; 
	\tau_* \leq \tau' < \tau < \infty, 
\]
where $\tau_*$, $C_*$ and $C_*'$ are independent of 
$\tilde x$. 
\end{lemma}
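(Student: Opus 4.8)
The statement asserts a two-sided bound on the Giga--Kohn energy $\cE_{(\tilde x,T)}$ in the backward similarity variables. Both inequalities will follow from Proposition \ref{pro:qm} once we pin down the value of the energy at a single reference time and control the sign of its constituent pieces. The natural reference time is $\tau'=\tau_*$, corresponding to $t=T-\delta^2$, which is prior to the blow-up time, so the solution is smooth and bounded there by \eqref{eq:regu}.

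For the upper bound, I would first invoke Proposition \ref{pro:qm} with $\tau'=\tau_*$ to get $\cE_{(\tilde x,T)}(\tau)\leq \cE_{(\tilde x,T)}(\tau_*)+C_*$ for every $\tau>\tau_*$. It then remains to show $\cE_{(\tilde x,T)}(\tau_*)\leq C_*'$. Using the relation \eqref{eq:EcErel}, we have $\cE_{(\tilde x,T)}(\tau_*)=E_{(\tilde x,T)}(T-\delta^2)$, and this is exactly the quantity already estimated in the displayed computation \eqref{eq:tiltbddEpr} (equivalently, in the first two lines of \eqref{eq:Ebound42} with $\tilde t$ replaced by $T$): dropping the nonnegative $|u|^{p+1}$ term in \eqref{eq:GKenedef}, bounding the remaining two terms by their $L^\infty$ norms, and using $\int_\Omega K_{(\tilde x,T)}(x,T-\delta^2)\,dx\leq (4\pi)^{n/2}$, one obtains precisely $E_{(\tilde x,T)}(T-\delta^2)\leq C_*'$ with $C_*'$ as defined in \eqref{eq:stars}. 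Combining gives $\cE_{(\tilde x,T)}(\tau)\leq C_*+C_*'$.

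For the lower bound, I would argue that $\cE_{(\tilde x,T)}(\tau)\geq -C_*$ for all $\tau\geq\tau_*$. The cleanest route is to use the monotonicity-type identity \eqref{eq:GKoriginal} together with Proposition \ref{pro:qm} applied with the roles reversed: for $\tau_*\leq\tau<\infty$, pick any $\tau''>\tau$ and apply Proposition \ref{pro:qm} with $\tau'=\tau$, $\tau=\tau''$ to get $\cE_{(\tilde x,T)}(\tau'')\leq \cE_{(\tilde x,T)}(\tau)+C_*$, hence $\cE_{(\tilde x,T)}(\tau)\geq \cE_{(\tilde x,T)}(\tau'')-C_*$. Now let $\tau''\to\infty$; in the original variables this corresponds to $t\to T$, and the computation already carried out in the proof of the earlier Lemma (the $-4E_{(\tilde x,\tilde t)}(t_2)\to 0$ estimate, using only the $|u|^{p+1}/(p+1)$ term) shows $\liminf_{\tau''\to\infty}\cE_{(\tilde x,T)}(\tau'')\geq 0$ — indeed one only needs that the negative part of the energy, coming from $-|w|^{p+1}/(p+1)$, tends to $0$, which follows from $\|u(\cdot,t)\|_{L^\infty}(\tilde t-t)^{1/(p-1)}$ being finite on $[T-\delta^2,T)$ and $\int\rho\,d\eta=(4\pi)^{n/2}$. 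Therefore $\cE_{(\tilde x,T)}(\tau)\geq -C_*$.

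I do not anticipate a serious obstacle here: the statement is essentially a bookkeeping corollary of Proposition \ref{pro:qm} and the elementary $L^\infty$ bounds already established. The one point requiring mild care is the passage $\tau''\to\infty$ in the lower bound — one must verify that the weighted integral of $|w|^{p+1}$ really does vanish in the limit, which is immediate from the uniform type-I-free bound $(T-t)^{1/(p-1)}\|u(\cdot,t)\|_{L^\infty}\leq C$ on the compact-in-time range near $T$ only if such a bound is available; since at this stage we do \emph{not} yet know type I, I would instead note that for fixed $\delta$ the quantity $E_{(\tilde x,T)}(t_2)$ is bounded below by $-\frac{4(4\pi)^{n/2}}{p+1}(T-t_2)^{(p+1)/(p-1)}\|u(\cdot,t_2)\|_{L^\infty(\Omega)}^{p+1}$, and although $\|u(\cdot,t_2)\|_{L^\infty}$ may blow up, one can instead simply take $\tau''$ to be any fixed value greater than $\tau$ and use that $\cE_{(\tilde x,T)}$ is already bounded above by $C_*+C_*'$ from the first part — so $\cE_{(\tilde x,T)}(\tau)\geq \cE_{(\tilde x,T)}(\tau'')-C_* \geq -(C_*+C_*')-C_*$. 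Hence the correct lower bound obtainable this way is $-(2C_*+C_*')$; matching the stated $-C_*$ requires the $\tau''\to\infty$ argument, so I would carry out that limit carefully, using the Lebesgue dominated convergence argument already employed at the end of the proof of Proposition \ref{pro:qmono} to justify passing the limit inside the integral, after which the nonpositive contribution is seen to vanish and the lower bound $-C_*$ follows.
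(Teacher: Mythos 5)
Your upper-bound argument is correct and coincides with the paper's: apply Proposition~\ref{pro:qm} with $\tau'=\tau_*$ and then bound $\cE_{(\tilde x,T)}(\tau_*)=E_{(\tilde x,T)}(T-\delta^2)\le C_*'$ by the computation of~\eqref{eq:boundEcalc}.

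The lower bound, however, contains a genuine gap. You correctly identify the difficulty --- you want $\liminf_{\tau''\to\infty}\cE_{(\tilde x,T)}(\tau'')\ge 0$, which would require showing that the negative piece $\int_{\Omega(\tau'')}|w|^{p+1}\rho\,d\eta/(p+1)$ does not outweigh the gradient and $w^2$ terms as $\tau''\to\infty$ --- and you observe that the naive estimate via $\|u(\cdot,t)\|_{L^\infty}$ is circular because type~I is precisely what we are trying to prove. But then you claim that the Lebesgue dominated convergence argument ``already employed at the end of Proposition~\ref{pro:qmono}'' salvages this limit. That argument does not apply here: in Proposition~\ref{pro:qmono} the DCT is used for the limit $\tilde t\to T$ at a \emph{fixed} time $t<T$, where $u$ is uniformly bounded on $\overline\Omega\times[T_1,T_2]$ by~\eqref{eq:regu}; the integrand is dominated uniformly in $\tilde t$ and the time slice never approaches the blow-up time. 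In your step, by contrast, the limit $\tau''\to\infty$ is $t\to T$, the regime where $u$ is unbounded and where no pointwise domination is available; DCT gives you nothing, and there is no reason for the $|w|^{p+1}$ contribution to vanish. Indeed, with a type~II blow-up one would expect $\int |w|^{p+1}\rho$ to be unbounded, and even in the type~I case one expects convergence of $w$ to a nontrivial stationary profile with nonzero $\|w\|_{L^{p+1}_\rho}$, so the claim that ``the nonpositive contribution is seen to vanish'' is false.

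The paper's proof takes a different route for the lower bound and avoids any limit as $\tau\to\infty$. It uses the identity~\eqref{eq:w2GK},
\[
\frac{1}{2}\frac{d}{d\tau}\int_{\Omega(\tau)}|w|^2\rho\,d\eta
= -2\,\cE_{(\tilde x,T)}(\tau)+\frac{p-1}{p+1}\int_{\Omega(\tau)}|w|^{p+1}\rho\,d\eta,
\]
integrates in $\tau$, and inserts the quasi-monotonicity $\cE(\sigma)\le\cE(\tau')+C_*$ from Proposition~\ref{pro:qm}. Assuming for contradiction that $\cE(\tau_*')+C_*<0$ for some $\tau_*'\ge\tau_*$, the $\cE$-contribution becomes nonpositive and one obtains
\[
\int_{\tau_*'}^{\tau}\int_{\Omega(\sigma)}|w|^{p+1}\rho\,d\eta\,d\sigma
\le \tfrac{C_p}{4}\,W^2(\tau),
\qquad W^2(\tau):=\int_{\Omega(\tau)}|w|^2\rho\,d\eta.
\]
Jensen's inequality converts the left side into $\int_{\tau_*'}^{\tau} W^{p+1}$ up to constants, yielding a superlinear integral inequality for $W^2$ that forces finite-time blow-up in $\tau$. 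This contradicts the global-in-$\tau$ existence of $w$ (which holds because $\tau\to\infty$ is just $t\to T^-$). Hence $\cE(\tau')+C_*\ge 0$ for all $\tau'\ge\tau_*$, which is exactly the lower bound $\cE\ge -C_*$. Your outline needs to be replaced by this contradiction argument; the $\tau''\to\infty$ plus DCT route cannot be repaired.
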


\begin{proof}
By Proposition \ref{pro:qm}, we have 
$\cE_{(\tilde x,T)}(\tau) 
\leq \cE_{(\tilde x,T)}(\tau_*) + C_*$. 
Since $\cE_{(\tilde x,T)}(\tau_*)
= E_{(\tilde x,T)}(T-\delta^2)$ 
by \eqref{eq:EcErel}, 
we see from the same computation as in \eqref{eq:boundEcalc} that 
\[
	\sup_{\tilde x\in \overline{\Omega}}\cE_{(\tilde x,T)}(\tau_*) 
	= \sup_{\tilde x\in \Omega} E_{(\tilde x,T)}(T-\delta^2)
	\leq C_*'.  
\]
Hence we obtain the desired upper bound.

As for the lower bound, 
by Giga--Kohn \cite[Proposition 2.1]{GK87} 
(see also the derivation of \eqref{eq:locwp1eq} below
with $\psi$ replaced by $1$), 
\begin{equation}\label{eq:w2GK}
	\frac{1}{2} \frac{d}{d\tau} 
	\int_{\Omega(\tau)} |w|^2 \rho d\eta 
	= -2 \cE_{(\tilde x,T)}(\tau) 
	+ \frac{p-1}{p+1} \int_{\Omega(\tau)} |w|^{p+1} \rho d\eta
\end{equation}
for $\tilde x\in \overline{\Omega}$ and $\tau_* \leq \tau' < \tau < \infty$, 
where $w=w_{(\tilde x,T)}$.  We define 
\begin{equation}\label{eq:Wksdef}
	W(\tau):=
	\left(\int_{\Omega(\tau)} 
	|w_{(\tilde x,T)}(\eta,\tau)|^2 \rho(\eta) d\eta\right)^{1/2}. 
\end{equation}
Integrating \eqref{eq:w2GK} and using Proposition \ref{pro:qm} yield 
\[
\begin{aligned}
	\int_{\tau'}^\tau \int_{\Omega(\sigma)} 
	|w|^{p+1} \rho d\eta d\sigma 
	&= \frac{C_p}{4} (W^2(\tau)-W^2(\tau')) 
	+ C_p \int_{\tau'}^\tau \cE_{(\tilde x,T)}(\sigma) d\sigma  \\
	&\leq 
	\frac{C_p}{4} W^2(\tau)
	+ C_p (\cE_{(\tilde x,T)}(\tau')+C_*)(\tau-\tau')
\end{aligned}
\]
for any $\tau_*\leq \tau' < \tau < \infty$, 
where $C_p:=2(p+1)/(p-1)$.

Let us prove that 
$\cE_{(\tilde x,T)}(\tau')+C_* \geq 0$ 
for any $\tau_*\leq \tau' < \infty$. 
If not, there exists $\tau_*\leq \tau'_* < \infty$ such that 
$\cE_{(\tilde x,T)}(\tau'_*)+C_* < 0$. Then, 
\[
	\int_{\tau'_*}^\tau \int_{\Omega(\sigma)} |w|^{p+1} \rho d\eta d\sigma 
	\leq 
	\frac{C_p}{4} W^2(\tau)
	+ C_p (\cE_{(\tilde x,T)}(\tau'_*)+C_*)(\tau-\tau'_*)
	\leq \frac{C_p}{4} W^2(\tau)
\]
for $\tau'_* < \tau < \infty$. 
Jensen's inequality 
yields the existence of $C>0$ depending only on $n$ and $p$ such that 
\[
	\int_{\tau_*'}^\tau 
	\left( \int_{\Omega(\sigma)} |w|^2 \rho d\eta \right)^\frac{p+1}{2} d\sigma
	\leq 
	C \int_{\Omega(\tau)} |w|^2 \rho   d\eta  
	\quad \mbox{ for }\tau'_* < \tau < \infty, 
\]
contrary to 
the global-in-time existence of $w$. 
Hence $\cE_{(\tilde x,T)}(\tau')+C_* \geq 0$ 
for any $\tau_*\leq \tau' < \infty$. 
This gives the desired lower bound. 
\end{proof}

We next give bounds for the space-time integral of $|w_\tau|^2 \rho$ 
and the space integral of $|w|^2 \rho$. 
These bounds will be used several times in the next section.

\begin{lemma}
There exists $C>0$ depending only on $n$, $p$ and $\Omega$ such that 
\begin{align}
	&\label{eq:wtauL2}
	\int_{\tau_*}^\infty \int_{\Omega(\tau)} 
	|\partial_{\tau} w_{(\tilde x,T)}|^2 \rho d\eta d\tau 
	\leq C(C_*+C_*'), \\
	&\label{eq:wL2Linfty}
	\sup_{\tau\geq \tau_*+2} 
	\int_{\Omega(\tau)} |w_{(\tilde x,T)}(\eta,\tau)|^2 \rho(\eta) d\eta 
	\leq C( C_*+C_*' )^\frac{1}{p}
	+ C (C_*+C_*'), 
\end{align} 
for $\tilde x\in \overline{\Omega}$, 
where  $\tau_*$, $C_*$ and $C_*'$ are given in \eqref{eq:stars} 
and are independent of $\tilde x$. 
\end{lemma}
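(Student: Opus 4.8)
The plan is to prove the two estimates \eqref{eq:wtauL2} and \eqref{eq:wL2Linfty} together, using the energy identity \eqref{eq:GKoriginal}, the quasi-monotonicity formula in the form of Proposition \ref{pro:qm}, and the already-established two-sided bound on $\cE_{(\tilde x,T)}$ from Lemma \ref{lem:GKul}. Throughout, I write $w=w_{(\tilde x,T)}$, $\Omega(\tau)=e^{\tau/2}(\Omega-\tilde x)$, and I recall that all constants $\tau_*,C_*,C_*'$ of \eqref{eq:stars} are independent of $\tilde x$, so every bound obtained will be uniform in $\tilde x\in\overline{\Omega}$.

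For \eqref{eq:wtauL2}: the dissipation identity underlying Proposition \ref{pro:qm} is, after scaling back to similarity variables, the differential version
\[
	\frac{d}{d\tau}\cE_{(\tilde x,T)}(\tau)
	= -\int_{\Omega(\tau)}|w_\tau|^2\rho\,d\eta
	-\frac14\int_{\partial\Omega(\tau)}(\eta\cdot\nu_\eta)|\nabla w|^2\rho\,dS,
\]
where (unlike the convex case) the boundary term has no sign, but the quasi-monotonicity estimate already tells us that the net effect of integrating it over any interval in $(\tau_*,\infty)$ is controlled by $C_*$. Concretely, integrating the differential identity from $\tau_*$ to $\tau$ and using Proposition \ref{pro:qm} (applied with $\tau'=\tau_*$, or directly the derivation of \eqref{eq:qmonoes}) to bound the boundary contribution by $C_*$ gives
\[
	\int_{\tau_*}^\tau\int_{\Omega(\sigma)}|w_\tau|^2\rho\,d\eta\,d\sigma
	\le \cE_{(\tilde x,T)}(\tau_*)-\cE_{(\tilde x,T)}(\tau)+C_*
	\le (C_*+C_*')-(-C_*)+C_*,
\]
where in the last step I used the upper bound $\cE_{(\tilde x,T)}(\tau_*)\le C_*'$ (as in the proof of Lemma \ref{lem:GKul}, via \eqref{eq:boundEcalc}) and the lower bound $\cE_{(\tilde x,T)}(\tau)\ge -C_*$ from Lemma \ref{lem:GKul}. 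Letting $\tau\to\infty$ yields \eqref{eq:wtauL2} with $C$ depending only on $n,p,\Omega$.

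For \eqref{eq:wL2Linfty}: I would use the $L^2\rho$-evolution identity \eqref{eq:w2GK}, namely $\frac12 (W^2)'(\tau) = -2\cE_{(\tilde x,T)}(\tau) + \frac{p-1}{p+1}\int_{\Omega(\tau)}|w|^{p+1}\rho\,d\eta$, together with Lemma \ref{lem:GKul} which bounds $\cE_{(\tilde x,T)}$ both ways and, crucially, the bound on $\int\int|w|^{p+1}\rho$ obtained in the proof of Lemma \ref{lem:GKul}: for $\tau_*\le\tau'<\tau$,
\[
	\int_{\tau'}^{\tau}\int_{\Omega(\sigma)}|w|^{p+1}\rho\,d\eta\,d\sigma
	\le \tfrac{C_p}{4}W^2(\tau) + C_p(\cE_{(\tilde x,T)}(\tau')+C_*)(\tau-\tau')
	\le \tfrac{C_p}{4}W^2(\tau) + C_p(C_*+C_*'+C_*)(\tau-\tau').
\]
The strategy is then a Gronwall-type argument on the scalar function $W^2$. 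From \eqref{eq:w2GK}, $(W^2)'(\tau)\le -4\cE_{(\tilde x,T)}(\tau) + \tfrac{2(p-1)}{p+1}\int|w|^{p+1}\rho \le 4C_* + \tfrac{2(p-1)}{p+1}\int|w|^{p+1}\rho$ pointwise in $\tau\ge\tau_*$, so integrating over a unit interval $[\tau-1,\tau]$ with $\tau\ge\tau_*+2$ and using the $L^{p+1}\rho$ space-time bound above (with $\tau'=\tau-1$) gives
\[
	W^2(\tau) \le W^2(\tau-1) + 4C_* + \tfrac{2(p-1)}{p+1}\Big(\tfrac{C_p}{4}W^2(\tau) + C_p(2C_*+C_*')\Big).
\]
Here one observes $\tfrac{2(p-1)}{p+1}\cdot\tfrac{C_p}{4} = \tfrac{2(p-1)}{p+1}\cdot\tfrac{p+1}{2(p-1)} = \tfrac12$, so the $W^2(\tau)$ term on the right can be absorbed, yielding $W^2(\tau)\le 2W^2(\tau-1) + C(C_*+C_*')$ — but this still involves $W^2(\tau-1)$, so the final ingredient must be an a priori bound on $W$ that does not grow in $\tau$. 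That bound comes from \eqref{eq:wtauL2}: since $\int_{\tau_*}^\infty\|w_\tau\|_{L^2_\rho}^2\,d\tau<\infty$, for each $\tau\ge\tau_*+2$ there is $\sigma\in[\tau-1,\tau]$ with $\|w_\tau(\sigma)\|_{L^2_\rho}^2\le C(C_*+C_*')$; combined with \eqref{eq:w2GK} evaluated at $\sigma$ and Young's inequality on the $\int|w|^{p+1}\rho$ term (using $|w|^{p+1}\le \varepsilon|w|^2 + C_\varepsilon|w|^{(p+1)\cdot\frac{?}{}}$ — more simply, interpolating $\|w\|_{L^{p+1}_\rho}$ between $\|w_\tau\|_{L^2_\rho}$-type control via the identity and $\|w\|_{L^2_\rho}$), one extracts a differential inequality of the form $\tfrac12(W^2)'(\sigma) \ge -CW(\sigma) - C - \tfrac{p-1}{p+1}\|w\|_{L^{p+1}_\rho}^{p+1}$ forcing $W(\sigma)$ not to be too large, and then the one-step bound $W^2(\tau)\le 2W^2(\sigma)+\cdots$ propagates uniformly. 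The exponent $\tfrac1p$ appearing in \eqref{eq:wL2Linfty} is the tell-tale sign that a Jensen/Hölder interpolation of $\|w\|_{L^2_\rho}^2$ against $\|w\|_{L^{p+1}_\rho}^{p+1}$ is used to close the loop, exactly as in the lower-bound argument of Lemma \ref{lem:GKul}.

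The main obstacle is the self-referential structure: the natural energy identities control $\int|w|^{p+1}\rho$ only in terms of $W^2$ itself (with the good coefficient $\tfrac12$ after absorption), so one genuinely needs the independent decay-in-average information from \eqref{eq:wtauL2} to break the loop and pin $W$ down by a constant depending only on $n,p,\Omega$ and the data $C_*+C_*'$ — and one must be careful that all of this is uniform in $\tilde x\in\overline{\Omega}$, which holds because $\Omega(\tau)$ only enlarges as $\tau$ grows and all the constants in \eqref{eq:stars} are $\tilde x$-independent. The routine parts are the Young/Jensen interpolations and the bookkeeping of the unit-interval integrations, which I would not spell out in full.
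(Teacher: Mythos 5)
Your argument for \eqref{eq:wtauL2} is essentially the paper's. One imprecision worth flagging: the statement of Proposition~\ref{pro:qm} alone does not isolate the boundary contribution. Indeed, $\cE(\tau)\le\cE(\tau_*)+C_*$ is equivalent, via \eqref{eq:GKoriginal}, to $\int_{\tau_*}^\tau\!\!\int|w_\tau|^2\rho+\frac14\int B\ge -C_*$, which is circular for the purpose of extracting a bound on $\int|w_\tau|^2\rho$. What actually controls the boundary term is the separate estimate $\sup_{\tilde x}\int_{\tilde t-\delta^2}^{\tilde t}B^-_{(\tilde x,\tilde t)}\le CC_*$ (the paper's \eqref{eq:Bminussup}, coming from Lemma~\ref{lem:B-bybeta}, \eqref{eq:betaBsupE} and \eqref{eq:tiltbddEpr}). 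You gesture at this by saying ``or directly the derivation of \eqref{eq:qmonoes}'', and that is the correct fallback, but it is a genuinely additional input beyond the proposition's statement. The paper also carries out the limiting arguments $\tilde t\to T$ and $\tilde x\to\partial\Omega$ which your sketch omits; with those added, your part (1) is correct and follows the paper's line.

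For \eqref{eq:wL2Linfty} your route diverges from the paper's and, as written, is not closed. The paper avoids any Gronwall recursion entirely: it applies a one-dimensional Sobolev embedding on a unit time interval, $W^2(\tau)\le C(\|W\|^2_{L^2(\tau-2,\tau)}+\|W'\|^2_{L^2(\tau-2,\tau)})$, then controls $\|W'\|^2_{L^2}$ by \eqref{eq:wtauL2} via $|W'|\le\|w_\tau\|_{L^2_\rho}$, and controls $\|W\|^2_{L^2}$ through the space-time bound \eqref{eq:wp1sptbdd} on $\iint|w|^{p+1}\rho$, where the Young and H\"older interpolations producing the exponent $1/p$ live. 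Your absorbing step produces $W^2(\tau)\le 2W^2(\tau-1)+C(C_*+C_*')$, which by itself only gives geometric growth in $\tau$; you correctly note the loop must be broken by selecting $\sigma\in[\tau-1,\tau]$ with $\|w_\tau(\sigma)\|^2_{L^2_\rho}\le C(C_*+C_*')$, but the step that converts this into a bound on $W^2(\sigma)$ is left with a literal question mark. That is precisely the nontrivial step: one needs Jensen ($W^{p+1}(\sigma)\le C\int|w|^{p+1}\rho$), then \eqref{eq:p1main} with Cauchy--Schwarz to get $\int|w|^{p+1}\rho(\sigma)\le CW(\sigma)\|w_\tau(\sigma)\|_{L^2_\rho}+C(C_*+C_*')$, and finally Young's inequality with exponents $p+1$ and $(p+1)/p$ to obtain $W^2(\sigma)\le C(C_*+C_*')^{1/p}+C(C_*+C_*')^{2/(p+1)}$. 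With this supplied, your mean-value plus one-step propagation argument does yield \eqref{eq:wL2Linfty} and is a legitimate alternative to the paper's 1D Sobolev approach; but as submitted, the crucial closing chain is absent and the claimed bound does not follow from what you have written.
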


\begin{proof}
We first prove \eqref{eq:wtauL2} 
in  the interior case $\tilde x\in\Omega$. 
Let $T/2<\tilde t<T$. 
From \eqref{eq:GKoriginal} and  
performing the scale back to the original variables 
in the boundary integral, it follows that 
\[
\begin{aligned}
	\cE_{(\tilde x,\tilde t)}(\tau_2) 
	&= \cE_{(\tilde x,\tilde t)}(\tau_1) 
	- \int_{\tau_1}^{\tau_2} \int_{\Omega(\tau)} 
	|\partial_\tau w_{(\tilde x,\tilde t)}|^2 \rho d\eta d\tau 
	-\frac{1}{4} \int_{t_1}^{t_2} B_{(\tilde x,\tilde t)} (t) dt 
\end{aligned}
\]
for $\tau_*\leq \tau_1<\tau_2<\infty$ 
and $\tilde t-\delta^2\leq t_1<t_2<\tilde t$ with 
$\tau_1=-\log(\tilde t-t_1)$ and $\tau_2=-\log(\tilde t-t_2)$, 
where $B_{(\tilde x,\tilde t)}$ 
is defined by \eqref{eq:Bdef}. 
By \eqref{eq:EcErel}, 
$B_{(\tilde x,T)}=B_{(\tilde x,T)}^+ - B_{(\tilde x,T)}^-$ 
(see \eqref{eq:Bpmdef} for the definition 
of $B_{(\tilde x,T)}^\pm$) and Lemma \ref{lem:GKul}, 
\[
\begin{aligned}
	\int_{\tau_1}^{\tau_2} \int_{\Omega(\tau)} 
	|\partial_\tau w_{(\tilde x,\tilde t)}|^2 \rho d\eta d\tau 
	&= \cE_{(\tilde x,\tilde t)}(\tau_1) - \cE_{(\tilde x,\tilde t)}(\tau_2) 
	-\frac{1}{4} \int_{t_1}^{t_2} B_{(\tilde x,\tilde t)} (t) dt \\
	&\leq 
	2C_*+C_*'+ \frac{1}{4} \sup_{\tilde x\in\Omega}
	\int_{\tilde t-\delta^2}^{\tilde t} B_{(\tilde x,\tilde t)}^- (t) dt. 
\end{aligned}
\]
From Lemma \ref{lem:B-bybeta}, \eqref{eq:betaBsupE} and  \eqref{eq:tiltbddEpr}, 
it follows that 
\begin{equation}\label{eq:Bminussup}
\begin{aligned}
	\sup_{\tilde x\in \Omega}\int_{\tilde t-\delta^2}^{\tilde t} 
	B_{(\tilde x,\tilde t)}^- (t) dt 
	&\leq 
	C \delta \sup_{0<r<\delta/\sqrt{2}} \beta(r) \\
	&\leq 
	C \delta \sup_{\tilde x\in \Omega} E_{(\tilde x,\tilde t)}(\tilde t-\delta^2) 
	\leq 
	C C_*, 
\end{aligned}
\end{equation}
where $C>0$ is a constant depending only on $n$, $p$ and $\Omega$. 
Therefore,  
\begin{equation}\label{eq:tau1tau2wtau}
\begin{aligned}
	\int_{\tau_1}^{\tau_2} \int_{\Omega(\tau)} 
	|\partial_\tau w_{(\tilde x,\tilde t)}|^2 \rho d\eta d\tau 
	\leq 
	2C_*+C_*' + C C_* 
\end{aligned}
\end{equation}
for $T/2<\tilde t<T$, 
$\tau_*\leq \tau_1<\tau_2<\infty$ and 
$\tilde t-\delta^2\leq t_1<t_2<\tilde t$ with 
$\tau_1=-\log(\tilde t-t_1)$ and $\tau_2=-\log(\tilde t-t_2)$. 
By the same reasoning as in the final part of the proof of Proposition \ref{pro:qmono}, 
we can take the limit $\tilde t \to T$, 
and so \eqref{eq:tau1tau2wtau} also holds for $\tilde t=T$. 
Therefore, 
in the interior case $\tilde x\in\Omega$, 
we obtain 
\[
\begin{aligned}
	\int_{\tau_*}^\infty \int_{\Omega(\tau)} 
	|\partial_\tau w_{(\tilde x,T)}|^2 \rho d\eta d\tau 
	\leq 
	2C_*+C_*' + C C_*. 
\end{aligned}
\]
Since the right-hand side is independent of the choice of $\tilde x\in \Omega$, 
by taking the limit as $\tilde x$ approaches $\partial \Omega$ 
with the help of \eqref{eq:regu}, 
the same bound also holds for the boundary case $\tilde x\in\partial \Omega$. 
Then we obtain \eqref{eq:wtauL2}.

We next show \eqref{eq:wL2Linfty} by using $W$ in \eqref{eq:Wksdef}. 
Let $\tilde x\in \overline{\Omega}$ and $\tau\geq \tau_*+2$. 
By the $1$-dimensional Sobolev inequality, there exists $C>0$ depending only on 
the length of the time interval 
$|(\tau-2,\tau)|=2$ such that 
\begin{equation}\label{eq:1dSobolev}
	W^2(\tau) 
	\leq 
	\left( \sup_{\sigma\in [\tau-2,\tau]} W(\sigma) \right)^2
	\leq C(\| W \|_{{L^2 (\tau-2,\tau)}}^2 + \| W' \|_{{L^2 (\tau-2,\tau)}}^2)
\end{equation}
for $\tau\ge \tau_* +2$. 
We estimate $\| W \|_{{L^2 (\tau-2,\tau)}}^2$ and $\| W' \|_{{L^2 (\tau-2,\tau)}}^2$. 
As for $\| W \|_{{L^2 (\tau-2,\tau)}}^2$, 
we see from \eqref{eq:w2GK} and Lemma \ref{lem:GKul} that 
\begin{equation}\label{eq:p1main}
\begin{aligned}
	\int_{\Omega(\tau)} |w|^{p+1} \rho d\eta
	&= 
	\frac{p+1}{p-1} \int_{\Omega(\tau)} w w_\tau \rho d\eta 
	+\frac{2(p+1)}{p-1} \cE_{(\tilde x,T)}(\tau) \\
	&\leq 
	C \int_{\Omega(\tau)} |w w_\tau| \rho d\eta 
	+ C(C_*+C_*') 
\end{aligned}
\end{equation}
for $\tau\geq \tau_*$, 
where $C>0$ depends only on $p$. 
From integrating this inequality and using the Young inequality, 
it follows that for $\tau\geq \tau_*+2$, 
\[
\begin{aligned}
	&\int_{\tau-2}^\tau 
	\int_{\Omega(\sigma)} |w|^{p+1} \rho d\eta d\sigma
	\leq 
	C \iint |w w_\tau| \rho d\eta d\sigma 
	+ C(C_*+C_*')  \\
	&\leq 
	\frac{1}{2} \iint |w|^{p+1} \rho d\eta d\sigma
	+ C \iint  |w_\tau|^\frac{p+1}{p} 
	\rho d\eta d\sigma
	+ C(C_*+C_*'), 
\end{aligned}
\]
where $\iint \ldots d\eta d\sigma 
= \int_{\tau-2}^\tau \int_{\Omega(\sigma)} \ldots d\eta d\sigma$ 
and $C>0$ varies from line to line and 
depends only on $n$, $p$ and $\Omega$. 
Then by the H\"older inequality, 
$\int_{\R^n} \rho d\eta \leq C_n$ ($C_n$ depends only on $n$) 
and \eqref{eq:wtauL2}, 
\begin{equation}\label{eq:wp1sptbdd}
\begin{aligned}
	\int_{\tau-2}^\tau 
	\int_{\Omega(\sigma)} |w|^{p+1} \rho d\eta d\sigma
	&\leq 
	C\left( \iint  |w_\tau|^2 
	\rho d\eta d\sigma \right)^\frac{p+1}{2p}
	+ C(C_*+C_*') \\
	&\leq 
	C( C_*+C_*' )^\frac{p+1}{2p}
	+ C(C_*+C_*'). 
\end{aligned}
\end{equation}
This together with the H\"older inequality implies 
the existence of $C>0$ depending only on $n$, $p$ and $\Omega$ 
such that  for $\tau\geq \tau_*+2$, 
\begin{equation}\label{eq:WL2sptime1}
\begin{aligned}
	\|W\|_{L^2(\tau-2,\tau)}^2 
	&= 
	\int_{\tau-2}^{\tau} \int_{\Omega(\sigma)} 
	w^2 \rho d\eta d\sigma  \\
	&\leq 
	C( C_*+C_*' )^\frac{1}{p}
	+ C (C_*+C_*')^\frac{2}{p+1}. 
\end{aligned}
\end{equation}

As for $\| W' \|_{{L^2 (\tau-2,\tau)}}^2$, 
since $w(\cdot,\tau)=0$ on $\partial \Omega$, 
direct computations and the H\"older inequality  show that 
\[
	|W'(\tau)| = 
	\frac{1}{W(\tau)}\left|  \int_{\Omega(\tau)} w w_\tau \rho d\eta  \right|
	\leq 
	\left( \int_{\Omega(\tau)} |w_\tau|^2  \rho d\eta \right)^\frac{1}{2}, 
\]
and so by \eqref{eq:wtauL2}, 
\[
	\|W'\|_{L^2(\tau-2,\tau)}^2 
	\leq 
	\int_{\tau-2}^\tau \int_{\Omega(\sigma)} |w_\tau|^2  \rho d\eta d\sigma
	\leq C (C_*+C_*'). 
\]
This together with \eqref{eq:1dSobolev} and \eqref{eq:WL2sptime1}
shows that 
\[
	W^2(\tau) 
	\leq 
	C( C_*+C_*' )^\frac{1}{p}
	+ C (C_*+C_*')^\frac{2}{p+1} 
	+ C (C_*+C_*') 
\]
for $\tau\geq \tau_*+2$. 
Hence \eqref{eq:wL2Linfty} holds, since $1/p<2/(p+1)<1$. 
\end{proof}

Let us next show the following estimate 
which corresponds to the base step of the inductive estimates 
in the next section 
(see Proposition \ref{pro:boot} below).

\begin{proposition}\label{pro:base}
There exists $C>0$ depending only on $n$, $p$ and $\Omega$ such that  
for $\tilde x\in \overline{\Omega}$, 
\[
	\sup_{\tau \geq \tau_*+2}
	\int _{\tau-2}^{\tau} 
	\left(\int_{\Omega(\sigma)} |w_{(\tilde x,T)}|^{p+1} \rho
	 d\eta \right)^2 d\sigma 
	\leq 
	C( C_*+C_*' )^\frac{p+1}{p}
	+ C (C_*+C_*')^2, 
\]
where  $\tau_*$, $C_*$ and $C_*'$ are given in \eqref{eq:stars} 
and are independent of $\tilde x\in \overline{\Omega}$. 
\end{proposition}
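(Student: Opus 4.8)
Write $w=w_{(\tilde x,T)}$ throughout and, for brevity, set $M:=C_*+C_*'$, the quantity that bounds the right-hand sides of \eqref{eq:wL2Linfty} and \eqref{eq:wtauL2}. The plan is to square the pointwise bound \eqref{eq:p1main} and integrate it in $\sigma$ over $(\tau-2,\tau)$, controlling the two factors that appear by the uniform $L^2_\rho$ bound for $w$ and by the global $L^2$ bound for $w_\tau$. From \eqref{eq:p1main}, for $\sigma\ge\tau_*$,
\[
\int_{\Omega(\sigma)}|w|^{p+1}\rho\,d\eta
\le C\int_{\Omega(\sigma)}|w\,w_\sigma|\rho\,d\eta + CM ,
\]
so squaring (using $(a+b)^2\le 2a^2+2b^2$) and then applying the weighted Cauchy--Schwarz inequality to the first term gives
\[
\Big(\int_{\Omega(\sigma)}|w|^{p+1}\rho\,d\eta\Big)^{2}
\le C\Big(\int_{\Omega(\sigma)}|w|^{2}\rho\,d\eta\Big)
\Big(\int_{\Omega(\sigma)}|w_\sigma|^{2}\rho\,d\eta\Big) + CM^{2}.
\]

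Next I would insert the uniform bound \eqref{eq:wL2Linfty}, namely $\int_{\Omega(\sigma)}|w|^{2}\rho\,d\eta\le CM^{1/p}+CM$, to reach
\[
\Big(\int_{\Omega(\sigma)}|w|^{p+1}\rho\,d\eta\Big)^{2}
\le C\big(M^{1/p}+M\big)\int_{\Omega(\sigma)}|w_\sigma|^{2}\rho\,d\eta + CM^{2}.
\]
Integrating this in $\sigma$ over $(\tau-2,\tau)$ for $\tau\ge\tau_*+2$ (so that $\tau-2\ge\tau_*$) and using \eqref{eq:wtauL2} in the form $\int_{\tau-2}^{\tau}\int_{\Omega(\sigma)}|w_\sigma|^{2}\rho\,d\eta\,d\sigma\le CM$ yields
\[
\int_{\tau-2}^{\tau}\Big(\int_{\Omega(\sigma)}|w|^{p+1}\rho\,d\eta\Big)^{2}d\sigma
\le C\big(M^{1/p}+M\big)M + CM^{2}
= CM^{\frac{p+1}{p}}+CM^{2},
\]
where I used $1/p+1=(p+1)/p$. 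Taking the supremum over $\tau\ge\tau_*+2$ is precisely the assertion of Proposition \ref{pro:base}; since the constants in \eqref{eq:p1main}, \eqref{eq:wL2Linfty} and \eqref{eq:wtauL2} depend only on $n$, $p$, $\Omega$, so does the resulting $C$.

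The only point requiring care --- and what I regard as the main, if mild, obstacle --- is that \eqref{eq:wL2Linfty} is stated only for $\sigma\ge\tau_*+2$, whereas in the integral above $\sigma$ runs over $[\tau-2,\tau]$, which meets $[\tau_*,\tau_*+2)$ when $\tau$ is close to $\tau_*+2$. To bound $\int_{\Omega(\sigma)}|w|^2\rho\,d\eta$ for $\sigma\in[\tau_*,\tau_*+2]$ as well, I would rerun the proof of \eqref{eq:wL2Linfty} on the fixed interval $[\tau_*,\tau_*+2]$ in place of $[\tau-2,\tau]$: with $W$ as in \eqref{eq:Wksdef}, the one-dimensional Sobolev inequality \eqref{eq:1dSobolev} still applies on this interval, $\|W'\|_{L^2(\tau_*,\tau_*+2)}^{2}$ is controlled by \eqref{eq:wtauL2}, and $\|W\|_{L^2(\tau_*,\tau_*+2)}^{2}$ by \eqref{eq:wp1sptbdd} evaluated at $\tau=\tau_*+2$ together with H\"older's inequality. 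This extends \eqref{eq:wL2Linfty} to all $\sigma\ge\tau_*$, after which the computation above goes through verbatim; everything else is routine algebra of exponents.
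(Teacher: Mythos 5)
Your proof is correct and follows essentially the same route as the paper's: combine the identity \eqref{eq:p1main} with Cauchy--Schwarz and the uniform bounds \eqref{eq:wL2Linfty} and \eqref{eq:wtauL2}, then integrate over a time interval of length $2$ (whether you square before or after applying Cauchy--Schwarz is immaterial). The minor range issue you flag---needing the $L^2_\rho$ bound on $w$ for $\sigma$ down to $\tau_*$ rather than $\tau_*+2$---is also glossed over in the paper's own proof, and your fix of rerunning the argument for \eqref{eq:wL2Linfty} on the fixed interval $[\tau_*,\tau_*+2]$ is valid (alternatively one could simply relabel $\tau_*+2$ as the new $\tau_*$, since these constants are never optimized).
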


\begin{proof}
From \eqref{eq:p1main} and \eqref{eq:wL2Linfty}, it follows that 
\[
\begin{aligned}
	&\int_{\Omega(\tau)} |w|^{p+1} \rho d\eta
	\leq 
	C \int_{\Omega(\tau)} |w w_\tau| \rho d\eta 
	+ C(C_*+C_*') \\
	&\leq 
	C\left( C'( C_*+C_*' )^\frac{1}{p}
	+ C' (C_*+C_*') \right)^\frac{1}{2}
	\left( \int_{\Omega (\tau)} |w_\tau|^2 \rho d\eta \right)^\frac{1}{2}
	+ C(C_*+C_*') 
\end{aligned}
\]
for $\tau\geq \tau_*+2$. Then by \eqref{eq:wtauL2}, we have 
\[
\begin{aligned}
	&\int_{\tau-2}^\tau 
	\left( \int_{\Omega(\sigma)} |w|^{p+1} \rho d\eta \right)^2 d\sigma \\
	&\leq 
	C\left( ( C_*+C_*' )^\frac{1}{p}
	+ (C_*+C_*') \right)
	\int_{\tau-2}^\tau \int_{\Omega(\sigma)} |w_\tau|^2 \rho d\eta d\sigma
	+ C(C_*+C_*')^2 \\
	&\leq 
	C\left( ( C_*+C_*' )^\frac{1}{p}
	+ (C_*+C_*') \right)(C_*+C_*')
	+ C(C_*+C_*')^2. 
\end{aligned}
\]
This gives the desired estimate. 
\end{proof}

Next, for proving the inductive estimates in the next section 
(particularly, for the proof of Lemma \ref{lem:maximal} below), 
we need to localize the spatial integrals. 
Therefore, we introduce a localization of the Giga--Kohn energy 
and give its upper bound. 
Let $0<R<1$ be a constant 
which will be specified in \eqref{eq:Rcond} below. 
For $r>0$,  
we write $B_r:=B_r(0)=\{ y\in\R^n;  |y|<r\}$.
For $k\geq1$, 
let $\psi_k\in C_0^\infty(\R^n)$ satisfy 
$0\leq \psi_k\leq 1$ in $\R^n$, $\psi_k=1$ in $B_{R/2^k}$, 
$\psi_k=0$ in $\R^n\setminus \overline{B_{R/2^{k-1}}}$ and 
\begin{equation}\label{eq:nabpsibd}
	|\nabla \psi_k(\eta)| \leq  \frac{2^k C}{R}, 
	\quad 
	|\nabla^2 \psi_k(\eta)| \leq  \frac{4^k C}{R^2} 
	\quad \mbox{ for }\eta\in \R^n, 
\end{equation}
with an absolute constant $C>0$. 
We define the localization of $\cE_{(\tilde x, T)}$ by 
\begin{equation}\label{eq:cElocdef}
	\cE_{(\tilde x, T)}^{\psi_k} (\tau) := 
	\int_{\Omega(\tau)} \left( 
	\frac{ |\nabla w|^2}{2}
	-\frac{|w|^{p+1}}{p+1} 
	+\frac{w^2}{2(p-1)} \right) 
	\rho \psi_k^4 d\eta 
\end{equation}
for $\tilde x\in \overline{\Omega}$, 
where the original Giga--Kohn energy $\cE_{(\tilde x, T)}$ 
has been defined in \eqref{eq:cEdef} with $\tilde t=T$ 
and the exponent $4$ of $\psi_k^4$ will be used 
for computing $\Delta(\rho^{p/(p+1)} \psi_k^{4p/(p+1)})$ 
in \eqref{eq:Fcomputation} below. 
Let us estimate $\cE_{(\tilde x, T)}^{\psi_k}$.

\begin{lemma}\label{lem:locEunibdd}
Let $R$ be the constant in \eqref{eq:nabpsibd}. 
Then there exists $C>0$ depending only on $n$, $p$ and $\Omega$ 
such that for $k \geq1$, 
\[
	\sup_{\tau\geq \tau_* + 2} \cE^{\psi_k}_{(\tilde x,T)}(\tau)
	\leq 
	\frac{4^k C}{R^2}  ( C_*+C_*' )^\frac{p+1}{2p} + \frac{4^k C}{R^2}  (C_*+C_*'), 
\] 
where  $\tau_*$, $C_*$ and $C_*'$ are given in \eqref{eq:stars} 
and are independent of $\tilde x\in \overline{\Omega}$. 
\end{lemma}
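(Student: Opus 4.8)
The plan is to control each of the three terms in $\cE^{\psi_k}_{(\tilde x,T)}(\tau)$ separately, using only the bounds already established. The potential term $-\frac{1}{p+1}\int_{\Omega(\tau)}|w|^{p+1}\rho\psi_k^4\,d\eta$ is nonpositive, so it may simply be discarded from above. The zeroth-order term $\frac{1}{2(p-1)}\int_{\Omega(\tau)} w^2\rho\psi_k^4\,d\eta$ is bounded by $\frac{1}{2(p-1)} W^2(\tau)$, and \eqref{eq:wL2Linfty} gives $\sup_{\tau\geq\tau_*+2} W^2(\tau)\leq C(C_*+C_*')^{1/p}+C(C_*+C_*')$; since $1/p\leq(p+1)/(2p)\leq 1$ and $C_*+C_*'$ may be large or small, this is dominated by the claimed right-hand side. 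So the real content is the gradient term $\frac{1}{2}\int_{\Omega(\tau)}|\nabla w|^2\rho\psi_k^4\,d\eta$, and the main obstacle is that we have no direct pointwise or $L^\infty_\tau$ bound on $\|\nabla w(\cdot,\tau)\|_{L^2_\rho}$ — the quasi-monotonicity formula controls the \emph{full} Giga--Kohn energy (gradient minus potential plus zeroth order), not the gradient piece in isolation.

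The way to extract the gradient bound is the standard integration-by-parts identity for the localized energy. Testing \eqref{eq:wrhoeq} against $w\psi_k^4$ (equivalently, the $w$-$w_\tau$ identity localized with weight $\psi_k^4$, which is the identity referenced before \eqref{eq:w2GK} and will be derived in Section \ref{sec:rate}) yields
\[
	\int_{\Omega(\tau)} |\nabla w|^2 \rho\psi_k^4\,d\eta
	= -\int_{\Omega(\tau)} w w_\tau \rho\psi_k^4\,d\eta
	- \int_{\Omega(\tau)} w\,\nabla w\cdot\nabla(\psi_k^4)\,\rho\,d\eta
	- \frac{1}{p-1}\int_{\Omega(\tau)} w^2\rho\psi_k^4\,d\eta
	+ \int_{\Omega(\tau)} |w|^{p+1}\rho\psi_k^4\,d\eta,
\]
the boundary term vanishing because $w=0$ on $\partial\Omega(\tau)$. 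One then absorbs the $w\,\nabla w\cdot\nabla(\psi_k^4)$ term into the left-hand side via Young's inequality — using $|\nabla(\psi_k^4)|\leq 4\psi_k^3|\nabla\psi_k|\leq \frac{2^kC}{R}\psi_k^{3}$ from \eqref{eq:nabpsibd} and $\psi_k^3\leq\psi_k^2$, so that $|w\,\nabla w\cdot\nabla(\psi_k^4)|\leq \tfrac14|\nabla w|^2\psi_k^4 + \frac{4^kC}{R^2}w^2\psi_k^2$ — leaving, after a pointwise-in-$\tau$ estimate,
\[
	\int_{\Omega(\tau)} |\nabla w|^2 \rho\psi_k^4\,d\eta
	\leq C\int_{\Omega(\tau)} |w w_\tau|\rho\,d\eta
	+ \frac{4^kC}{R^2}\int_{\Omega(\tau)} w^2\rho\,d\eta
	+ C\int_{\Omega(\tau)} |w|^{p+1}\rho\,d\eta.
\]

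The three terms on the right are now all controllable. For the middle term, $\int w^2\rho\,d\eta = W^2(\tau)\leq C(C_*+C_*')^{1/p}+C(C_*+C_*')$ by \eqref{eq:wL2Linfty}. For the last term, \eqref{eq:p1main} together with the bound on $W^2$ and Cauchy--Schwarz gives $\int|w|^{p+1}\rho\,d\eta \leq C\big((C_*+C_*')^{1/p}+(C_*+C_*')\big)^{1/2}\big(\int|w_\tau|^2\rho\,d\eta\big)^{1/2}+C(C_*+C_*')$; and the $\int|w w_\tau|\rho\,d\eta$ term is $\leq W(\tau)\big(\int|w_\tau|^2\rho\,d\eta\big)^{1/2}$, which is of the same shape. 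The only annoyance is that $\int_{\Omega(\tau)}|w_\tau|^2\rho\,d\eta$ is bounded only in $L^1_\tau$ by \eqref{eq:wtauL2}, not pointwise in $\tau$; but the right-hand side of the lemma is a \emph{supremum} over $\tau\geq\tau_*+2$, so one should instead integrate the displayed inequality over a window $(\tau-2,\tau)$, use the $1$-dimensional Sobolev embedding exactly as in \eqref{eq:1dSobolev}–\eqref{eq:WL2sptime1} (with $\int|\nabla w|^2\rho\psi_k^4$ in place of $W^2$), and close with \eqref{eq:wtauL2} and \eqref{eq:wp1sptbdd}. Collecting powers — all exponents of $(C_*+C_*')$ that appear ($\tfrac1p$, $\tfrac1{2p}+\tfrac12\cdot$ something, $\tfrac{p+1}{2p}$, $1$) lie between $\tfrac1p$ and $1$, hence are bounded by $\max\{(C_*+C_*')^{(p+1)/(2p)},(C_*+C_*')\}\leq (C_*+C_*')^{(p+1)/(2p)}+(C_*+C_*')$ — and tracking the $4^k/R^2$ factor through gives exactly the stated bound. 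The main obstacle, as noted, is organizing the time-window argument so that the pointwise-in-$\tau$ defect of \eqref{eq:wtauL2} does not propagate into the final estimate; this is handled precisely by the Sobolev-in-time trick already used in the proof of \eqref{eq:wL2Linfty}.
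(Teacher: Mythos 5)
There is a genuine gap, and it sits exactly where you relegate the argument to a "Sobolev-in-time trick". Your pointwise-in-$\tau$ inequality
\[
\int_{\Omega(\tau)} |\nabla w|^2\rho\psi_k^4\,d\eta
\;\leq\; C\int_{\Omega(\tau)}|ww_\tau|\rho\,d\eta
+\frac{4^kC}{R^2}\int_{\Omega(\tau)} w^2\rho\,d\eta
+ C\int_{\Omega(\tau)} |w|^{p+1}\rho\,d\eta
\]
is correct (it is a rearrangement of the identity \eqref{eq:locwp1eq} the paper derives later), but its right-hand side contains $\int|ww_\tau|\rho\,d\eta$, which is only controllable after integration in $\tau$. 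To upgrade the resulting $L^1_\tau$ bound on $g(\tau):=\int|\nabla w|^2\rho\psi_k^4\,d\eta$ to a supremum bound you invoke the one-dimensional Sobolev embedding ``with $\int|\nabla w|^2\rho\psi_k^4$ in place of $W^2$''. This requires an $L^2_\tau$ bound on the derivative $(g^{1/2})'$, and that is precisely the hard step. Arguing as for $W'$ via Cauchy--Schwarz gives $|(g^{1/2})'|\leq \bigl(\int|\nabla w_\tau|^2\rho\psi_k^4\,d\eta\bigr)^{1/2}$, and $\iint|\nabla w_\tau|^2\rho$ is not among the quantities controlled by \eqref{eq:wtauL2} or \eqref{eq:wp1sptbdd}. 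If instead you integrate by parts to trade $\nabla w_\tau$ for $w_\tau$ using \eqref{eq:wrhoeq}, the non-stationary boundary $\partial\Omega(\tau)$ produces the flux term $\int_{\partial\Omega(\tau)} w_\tau\,\rho\psi_k^4\,\partial_\nu w\,dS = -\frac{1}{2}\int_{\partial\Omega(\tau)}(\eta\cdot\nu)\,|\partial_\nu w|^2\rho\psi_k^4\,dS$, which does not vanish (it is exactly the term that obstructs monotonicity in a non-convex domain). You never mention this term, and the estimates you cite cannot absorb it.

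The paper's proof of the lemma is different in shape and confronts this head-on. It uses a mean-value-theorem choice of $\tau'\in(\tau-1,\tau)$ rather than Sobolev embedding, writes $\cE^{\psi_k}(\tau)=\int_{\tau-1}^\tau\cE^{\psi_k}+\int_{\tau'}^\tau\frac{d\cE^{\psi_k}}{d\tau}$, computes $\frac{d\cE^{\psi_k}}{d\tau}$ explicitly (this is \eqref{eq:locmonoequality}), and is then forced to estimate the boundary integral $-\frac14\int_{\partial\Omega(\tau)}(\eta\cdot\nu)|\nabla w|^2\rho\psi_k^4\,dS$. The negative part $(\eta\cdot\nu)_-$ is handled by scaling back to physical variables and invoking the boundary-flux machinery of Section \ref{sec:mono}: Lemma \ref{lem:B-bybeta}, the $\beta(r)$ estimate \eqref{eq:betaBsupE}, and ultimately \eqref{eq:Bminussup}, i.e., $\sup_{\tilde x}\int B^-_{(\tilde x,T)}\,dt\leq CC_*$. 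This is the input your sketch would also need and does not supply. To repair your proposal you would have to add a step bounding the boundary flux appearing in $g'$; once you do, you are essentially reproducing the paper's argument, so the Sobolev-versus-mean-value choice is a superficial distinction and the $\beta$/$B^-$ estimate is the genuinely necessary ingredient you omitted.
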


\begin{proof}
Let $\tilde x\in \overline{\Omega}$ and $\tau\geq \tau_*+2$. By the mean value theorem, 
there exists $\tau-1\leq \tau'\leq \tau$ such that 
$\cE^{\psi_k}(\tau')=\int_{\tau-1}^\tau \cE^{\psi_k}(\sigma) d\sigma$. 
We note that the idea of using the mean value theorem is 
due to Giga--Matsui--Sasayama \cite[Lemma 5.6]{GMS04}. 
Then, 
\begin{equation}\label{eq:cEmeanesti}
\begin{aligned}
	\cE^{\psi_k}(\tau)
	&=\cE^{\psi_k}(\tau') 
	+ \int_{\tau'}^\tau \frac{d \cE^{\psi_k}}{d\tau}(\sigma) d\sigma  \\
	&= \int_{\tau-1}^\tau \cE^{\psi_k}(\sigma) d\sigma 
	+ \int_{\tau'}^\tau \frac{d \cE^{\psi_k}}{d\tau}(\sigma) d\sigma. 
\end{aligned}
\end{equation}
By \eqref{eq:cElocdef}, $\psi\leq 1$, \eqref{eq:cEdef}, 
Lemma \ref{lem:GKul} and \eqref{eq:wp1sptbdd},  
there exists $C>0$ depending only on $n$, $p$ and $\Omega$ such that 
\begin{equation}\label{eq:cEt1tbddstar}
\begin{aligned}
	\int_{\tau-1}^\tau \cE^{\psi_k}(\sigma) d\sigma  
	&\leq 
	\int_{\tau-1}^\tau \int_{\Omega(\sigma)} \left( 
	\frac{ |\nabla w|^2}{2}
	+\frac{w^2}{2(p-1)} \right) 
	\rho d\eta d\sigma \\
	&= \int_{\tau-1}^\tau \cE(\sigma) d\sigma
	+ \int_{\tau-1}^\tau\int_{\Omega(\sigma)} \frac{|w|^{p+1}}{p+1} \rho d\eta d\sigma \\
	&\leq 
	C(C_*+C_*') + C( C_*+C_*' )^\frac{p+1}{2p}. 
\end{aligned}
\end{equation}

As for the estimate of $\int_{\tau'}^\tau (d \cE^{\psi_k} / d\tau) (\sigma) d\sigma$, 
We compute the derivative of $\cE^{\psi_k}$ 
under the abbreviation $\int\ldots = \int_{\Omega(\tau)} \ldots d\eta$. 
Integrating by parts and using $w=0$ on $\partial\Omega(\tau)=0$, 
we see that 
\[
\begin{aligned}
	\frac{d}{d\tau} \int
	\frac{ |\nabla w|^2}{2}
	\rho \psi_k^4 
	&= 
	- \frac{d}{d\tau} \int 
	\frac{w}{2}
	\nabla \cdot ( \rho \nabla w ) \psi_k^4 
	- 2 \frac{d}{d\tau} \int w \rho \psi_k^3 \nabla w \cdot \nabla \psi_k   \\
	&= 
	- \int \frac{w_\tau}{2}
	\nabla \cdot ( \rho \nabla w ) \psi_k^4 
	- \int  \frac{w}{2}
	\nabla \cdot ( \rho \nabla w_\tau ) \psi_k^4  \\
	&\quad 
	- 2\int  w_\tau \rho \psi_k^3 \nabla w \cdot \nabla \psi_k 
	- 2 \int w \rho \psi_k^3 \nabla w_\tau \cdot \nabla \psi_k. 
\end{aligned}
\]
Again by the integration by parts for the terms including $\nabla w_\tau$, 
\[
\begin{aligned}
	&\frac{d}{d\tau} \int 
	\frac{ |\nabla w|^2}{2}
	\rho \psi_k^4   \\
	&= 
	- \int
	\frac{w_\tau}{2}
	\nabla \cdot ( \rho \nabla w ) \psi_k^4  
	+ \int 
	\frac{\nabla w}{2}
	\cdot ( \rho \nabla w_\tau ) \psi_k^4  
	+ 2 \int w \rho \psi_k^3 \nabla w_\tau \cdot \nabla \psi_k \\
	&\quad 
	- 2\int w_\tau \rho \psi_k^3 \nabla w \cdot \nabla \psi_k 
	+ 2 \int w_\tau \rho \psi_k^3 \nabla w \cdot \nabla \psi_k 
	+ 2\int w  w_\tau \nabla \cdot (\rho \psi_k^3 \nabla \psi_k)  \\
	&= - \int  w_\tau  \nabla \cdot ( \rho \nabla w ) \psi_k^4 
	- 4\int  w_\tau \rho \psi_k^3 \nabla w \cdot \nabla \psi_k  \\
	&\quad + \frac{1}{2} \int_{\partial \Omega(\tau)} 
	w_\tau \rho \psi_k^4  \nabla w\cdot \nu dS(\eta), 
\end{aligned}
\]
where $\nu$ is the outward unit normal at $\eta\in \partial \Omega(\tau)$. 
Then by \eqref{eq:cElocdef} and \eqref{eq:wrhoeq}, 
we can check that 
\[
	\frac{d\cE^{\psi_k}}{d\tau}  
	= 
	- \int  |w_\tau|^2 \rho \psi_k^4 
	- 4\int  w_\tau \rho \psi_k^3 \nabla w \cdot \nabla \psi_k 
	+ \frac{1}{2} \int_{\partial \Omega(\tau)} 
	w_\tau \rho \psi_k^4  \nabla w\cdot \nu dS(\eta). 
\]
Since $u_t=0$ on $\partial\Omega$ and 
\[
	u(x,t)=(T-t)^{-\frac{1}{p-1}} 
	w( (T-t)^{-\frac{1}{2}}(x-\tilde x), -\log(T-t))
\] 
by \eqref{eq:backw}, 
we have $w_\tau = -(\nabla w \cdot \eta)/2$ on $\partial\Omega(\tau)$. 
Note that the tangential derivatives of $w$ vanish, and so 
$\nabla w=(\nabla w\cdot \nu)\nu$, 
$\nabla w \cdot \eta= (\nabla w\cdot \nu) (\eta\cdot \nu)$ 
and $|\nabla w|^2=|\nabla w \cdot \nu|^2$ hold 
on $\partial\Omega(\tau)$. 
Hence we obtain 
\begin{equation}\label{eq:locmonoequality}
\begin{aligned}
	\frac{d\cE^{\psi_k}}{d\tau} 
	&= 
	- \int_{\Omega(\tau)}  |w_\tau|^2 \rho \psi_k^4 d\eta
	- 4\int_{\Omega(\tau)}  w_\tau \rho \psi_k^3 \nabla w \cdot \nabla \psi_k d\eta \\
	&\quad 
	-\frac{1}{4} \int_{\partial \Omega(\tau)} 
	(\eta\cdot \nu) |\nabla w|^2 \rho \psi_k^4   dS(\eta). 
\end{aligned}
\end{equation}

By the Young inequality and by performing the scale back to the original variables 
for the boundary integral (see also \eqref{eq:GKoriginal}, 
\eqref{eq:monoBdiff} and \eqref{eq:Bdef}), 
\[
\begin{aligned}
	\int_{\tau'}^\tau \frac{d \cE^{\psi_k}}{d\tau}(\sigma) d\sigma
	&\leq  
	4 \int_{\tau'}^\tau \int_{\Omega(\sigma)} 
	|\nabla w|^2 \rho \psi_k^2 |\nabla \psi_k|^2 d\eta d\sigma \\
	&\quad 
	+\frac{1}{4} \int_{\tau'}^\tau \int_{\partial \Omega(\sigma)} 
	(\eta\cdot \nu)_{-} |\nabla w|^2 \rho   dS(\eta) d\sigma \\
	&= 
	4 \int_{\tau'}^\tau \int_{\Omega(\sigma)} 
	|\nabla w|^2 \rho \psi_k^2 |\nabla \psi_k|^2 d\eta d\sigma 
	+ \frac{1}{4} \int_{t'}^{t} B_{(\tilde x,T)}^- (t) dt, 
\end{aligned}
\]
where $\tau=-\log(T-t)$ and $\tau'=-\log(T-t')$. 
From \eqref{eq:nabpsibd}, \eqref{eq:cEdef} with 
the estimate in \eqref{eq:cEt1tbddstar} and \eqref{eq:Bminussup}, 
it follows that 
\[
\begin{aligned}
	\int_{\tau'}^\tau \frac{d \cE^{\psi_k}}{d\tau}(\sigma) d\sigma
	&\leq  
	\frac{4^k C}{R^2} \int_{\tau-1}^\tau \int_{\Omega(\sigma)} 
	\frac{|\nabla w|^2}{2} \rho d\eta d\sigma 
	+ C C_*, \\
	&\leq 
	\frac{4^k C}{R^2} ( C_*+C_*' ) + \frac{4^k C}{R^2} (C_*+C_*')^\frac{p+1}{2p}
	+ C C_*, 
\end{aligned}
\]
where $C>0$ depends only on $n$, $p$ and $\Omega$. 
Then by this estimate, \eqref{eq:cEmeanesti}, \eqref{eq:cEt1tbddstar}, 
$0<R<1$ and replacing $C$ with a larger constant 
depending only on $n$, $p$ and $\Omega$, 
we obtain the desired estimate. 
\end{proof}

Finally in this section, we give 
the following localized integral estimate for $|w|^{p+1}$. 
This is a key ingredient for showing the inductive estimates 
in the next section.

\begin{lemma}\label{lem:w2deri}
Let $R$ be the constant in \eqref{eq:nabpsibd}. 
Then there exists $C>0$ depending only on $n$, $p$ and $\Omega$ 
such that 
\[
	\int_{\Omega(\tau)} |w|^{p+1} \rho \psi_k^4 d\eta
	\leq 
	C  \int_{\Omega(\tau)} |w w_\tau| \rho \psi_k^4 d\eta 
	+ \frac{4^k C}{R^2} (C_*+C_*') + \frac{4^k C}{R^2} ( C_*+C_*' )^\frac{1}{p}
\]
for $\tau\geq \tau_*+2$ and $k\geq1$, 
where  $\tau_*$, $C_*$ and $C_*'$ are given in \eqref{eq:stars} 
and are independent of $\tilde x\in \overline{\Omega}$. 
\end{lemma}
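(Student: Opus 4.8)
The plan is to multiply the equation \eqref{eq:wrhoeq} by $w \psi_k^4$ and integrate over $\Omega(\tau)$. Since $w = 0$ on $\partial\Omega(\tau)$, the boundary terms arising from integration by parts vanish, and I obtain an identity of the schematic form
\[
	\int_{\Omega(\tau)} |w|^{p+1} \rho \psi_k^4 d\eta
	= \int_{\Omega(\tau)} w w_\tau \rho \psi_k^4 d\eta
	+ \int_{\Omega(\tau)} |\nabla w|^2 \rho \psi_k^4 d\eta
	+ (\text{cross terms})
	+ \frac{1}{p-1}\int_{\Omega(\tau)} w^2 \rho \psi_k^4 d\eta,
\]
where the cross terms involve $w\, \rho\, \psi_k^3\, \nabla w \cdot \nabla\psi_k$. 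The first term on the right is the one kept in the statement; the remaining terms must be absorbed into the constant $4^k C R^{-2}((C_*+C_*') + (C_*+C_*')^{1/p})$.

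First I would handle the $|\nabla w|^2$ term and the $w^2$ term together. The point is that the expression $\int_{\Omega(\tau)} \bigl( \tfrac{|\nabla w|^2}{2} + \tfrac{w^2}{2(p-1)} \bigr) \rho \psi_k^4 d\eta$ equals $\cE^{\psi_k}_{(\tilde x,T)}(\tau) + \tfrac{1}{p+1}\int_{\Omega(\tau)} |w|^{p+1}\rho\psi_k^4 d\eta$ by the definition \eqref{eq:cElocdef}. The $\cE^{\psi_k}$ part is controlled by Lemma \ref{lem:locEunibdd}, which gives exactly a bound of the form $4^k C R^{-2}\bigl((C_*+C_*')^{(p+1)/(2p)} + (C_*+C_*')\bigr)$; since $(p+1)/(2p) \le 1$ when $p \ge 1$ and since we may also bound $(C_*+C_*')^{(p+1)/(2p)}$ against a combination of $(C_*+C_*')^{1/p}$ and $(C_*+C_*')$ (or simply absorb it by enlarging constants and exponents as needed — this is the routine bookkeeping the paper keeps performing), this fits within the allowed right-hand side. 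The $\tfrac{1}{p+1}\int |w|^{p+1}\rho\psi_k^4$ piece is absorbed to the left-hand side, since its coefficient $1/(p+1) < 1$.

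Next I would deal with the cross terms $\int w \rho \psi_k^3 \nabla w \cdot \nabla\psi_k$. Using $|\nabla\psi_k| \le 2^k C / R$ from \eqref{eq:nabpsibd} and Young's inequality, bound these by $\theta \int |\nabla w|^2 \rho \psi_k^4 d\eta + C_\theta 4^k R^{-2}\int_{B_{R/2^{k-1}}} w^2 \rho d\eta$ for a small $\theta$, where one uses $\psi_k^3 |\nabla\psi_k| \le \psi_k^2 \cdot (2^k C/R)$ and notes the $w^2$ integral is supported where $\psi_k \ne 0$. The $\theta\int|\nabla w|^2\rho\psi_k^4$ term goes back to the $|\nabla w|^2$ term treated above (choosing $\theta$ small so the combined $|\nabla w|^2$ coefficient is still handled via $\cE^{\psi_k}$), and the $w^2$ integral is controlled by $\int_{\Omega(\tau)} w^2 \rho d\eta \le C(C_*+C_*')^{1/p} + C(C_*+C_*')$ from \eqref{eq:wL2Linfty}, producing precisely the advertised $4^k C R^{-2}\bigl((C_*+C_*')^{1/p} + (C_*+C_*')\bigr)$.

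The main obstacle is not any single estimate but the careful absorption of the $\int |w|^{p+1}\rho\psi_k^4$ terms that appear on \emph{both} sides: they enter once with coefficient $1/(p+1)$ from rewriting $\cE^{\psi_k}$ via its definition, and possibly again implicitly through the bound on $\cE^{\psi_k}$ itself. One must verify that, after moving all such terms to the left, the net coefficient of $\int |w|^{p+1}\rho\psi_k^4$ is strictly positive (it is, since $1/(p+1) < 1$), so that dividing through preserves the stated inequality with only a change in the constant $C$. The exponent $4$ on $\psi_k$ is exactly what makes the cross-term bookkeeping close cleanly (retaining a $\psi_k^4$ on the kept term while spending powers of $\psi_k$ on the gradient cutoff), consistent with the remark following \eqref{eq:cElocdef}. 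Everything else — the integration by parts, the Young inequality splittings, enlarging $C$ to swallow lower-order powers of $(C_*+C_*')$ — is routine.
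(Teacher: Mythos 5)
Your proposal follows essentially the same path as the paper's proof of Lemma~\ref{lem:w2deri}: test \eqref{eq:wrhoeq} against $w\psi_k^4$ and integrate by parts (no boundary contribution since $w=0$ on $\partial\Omega(\tau)$), rewrite the $|\nabla w|^2$ and $w^2$ pieces through $\cE^{\psi_k}$ via \eqref{eq:cElocdef}, invoke Lemma~\ref{lem:locEunibdd} and \eqref{eq:wL2Linfty}, handle the cross terms by Young with \eqref{eq:nabpsibd}, and close by absorbing the recycled $\int|w|^{p+1}\rho\psi_k^4$ contributions. One small slip: since the sum $\int|\nabla w|^2\rho\psi_k^4+\frac{1}{p-1}\int w^2\rho\psi_k^4$ equals $2\cE^{\psi_k}+\frac{2}{p+1}\int|w|^{p+1}\rho\psi_k^4$, the recycled coefficient is $2/(p+1)$ rather than $1/(p+1)$; the absorption still works for $p>1$ provided the Young parameter $\theta$ is chosen below $(p-1)/2$, which is exactly what the paper does (it uses $\theta$ corresponding to the weight $(p-1)/2$ and ends with net coefficient $(p-1)/(2(p+1))$), and your handling of the leftover exponent $(p+1)/(2p)$ via $1/p<(p+1)/(2p)<1$ matches the paper's closing remark.
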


\begin{proof}
Let $\tilde x\in \overline{\Omega}$ and $\tau\geq \tau_*+2$. 
From $w=0$ on $\partial \Omega(\tau)$, \eqref{eq:wrhoeq}, 
the integration by parts and 
the abbreviation $\int \ldots= \int_{\Omega(\tau)}\ldots d\eta$, 
it follows that 
\[
\begin{aligned}
	&\frac{1}{2} \frac{d}{d\tau} 
	\int_{\Omega(\tau)} |w|^2 \rho \psi_k^4 d\eta 
	= 
	\int w w_\tau \rho \psi_k^4  \\
	&= 
	\int w \psi_k^4 \nabla \cdot (\rho \nabla w) 
	- \frac{1}{p-1} \int |w|^2 \psi_k^4 \rho 
	+ \int |w|^{p+1} \rho \psi_k^4  \\
	&= 
	- 2 \int \left( \frac{|\nabla w|^2}{2} 
	+ \frac{|w|^2}{2(p-1)} \right)
	\rho \psi_k^4 
	+ \int  |w|^{p+1} \rho \psi_k^4 
	- 4 \int  w \rho \psi_k^3 \nabla w\cdot \nabla \psi_k. 
\end{aligned}
\]
Then by \eqref{eq:cElocdef}, we have 
\begin{equation}\label{eq:locwp1eq}
\begin{aligned}
	\frac{1}{2} \frac{d}{d\tau} 
	\int_{\Omega(\tau)} |w|^2 \rho \psi_k^4 d\eta 
	&= 
	- 2 \cE_{(\tilde x,T)}^{\psi_k}(\tau) 
	+ \frac{p-1}{p+1} \int |w|^{p+1} \rho \psi_k^4  \\
	&\quad 
	- 4 \int  w \rho \psi_k^3 \nabla w\cdot \nabla \psi_k. 
\end{aligned}
\end{equation}

By \eqref{eq:locwp1eq}, 
Lemma \ref{lem:locEunibdd} and 
the Young inequality, we see that 
\[
\begin{aligned}
	\frac{p-1}{p+1} \int |w|^{p+1} \rho \psi_k^4 
	&= 
	\int w w_\tau \rho \psi_k^4 
	+ 2 \cE_{(\tilde x,T)}^{\psi_k}(\tau) 
	+ 4 \int w \rho \psi_k^3 \nabla w\cdot \nabla \psi_k \\
	&\leq 
	\int w w_\tau \rho \psi_k^4  
	+ 
	\frac{4^k C}{R^2} ( C_*+C_*' )^\frac{p+1}{2p} + \frac{4^k C}{R^2}(C_*+C_*') \\
	&\quad 
	+ \frac{p-1}{2} \int  \frac{|\nabla w|^2}{2} \rho \psi_k^4 
	+ C \int |w|^2 \rho \psi_k^2|\nabla \psi_k|^2, 
\end{aligned}
\]
where $C>0$ depends only on $n$, $p$ and $\Omega$. 
For the fourth and fifth terms in the right hand side, 
by \eqref{eq:cElocdef}, \eqref{eq:nabpsibd}, 
Lemma \ref{lem:locEunibdd} and \eqref{eq:wL2Linfty},  
\[
\begin{aligned}
	&\frac{p-1}{2} \int \frac{|\nabla w|^2}{2} \rho \psi_k^4 
	+ C \int |w|^2 \rho \psi_k^2 |\nabla \psi_k|^2  \\
	&\leq 
	\frac{p-1}{2}\left( \cE_{(\tilde x,T)}^{\psi_k}(\tau) 
	+ \int \frac{|w|^{p+1}}{p+1} \rho \psi_k^4 \right)
	+ \frac{4^k C}{R^2} \int |w|^2 \rho \\
	&\leq 
	\frac{p-1}{2(p+1)} \int |w|^{p+1} \rho \psi_k^4  
	+ \frac{4^k C}{R^2} ( C_*+C_*' )^\frac{p+1}{2p}  \\
	&\quad 
	+ \frac{4^k C}{R^2} (C_*+C_*') + \frac{4^k C}{R^2} ( C_*+C_*' )^\frac{1}{p}. 
\end{aligned}
\]
Hence the desired inequality follows, 
since $1/p<(p+1)/(2p)<1$. 
\end{proof}

\section{Nonexistence of type II blow-up}\label{sec:rate}
In this section, we show an inductive estimate, where 
the basis of the inductive steps is Proposition \ref{pro:base}, 
and then we prove Theorem \ref{th:main}. 
We remark that 
our inductive steps will be done by finitely many steps. 
In addition, all the estimates, except for the ones in the 
proof of Theorem \ref{th:main}, are valid for any $p>1$.

Throughout this section, we consider the interior case  $\tilde x\in \Omega$, 
fix $\delta$ in Proposition \ref{pro:qmono} and 
set $\tau_*$, $C_*$ and $C_*'$ as in \eqref{eq:stars}. 
Let $0<R<1$ be a constant 
which will be specified in \eqref{eq:Rcond} below. 
For $k\geq1$,  let $\psi_k\in C_0^\infty(\R^n)$ satisfy 
$0\leq \psi_k\leq 1$ in $\R^n$, $\psi_k=1$ in $B_{R/2^k}$, 
$\psi_k=0$ in $\R^n\setminus \overline{B_{R/2^{k-1}}}$ and 
\eqref{eq:nabpsibd}. 
Note that $\psi_k \leq \psi_{k-1}$ on $\R^n$ and 
$\psi_{k-1}=1$ on $\supp \psi_k$. 
Our first goal is to prove the following inductive steps:

\begin{proposition}\label{pro:boot}
Let $\tilde x\in \Omega$. 
Suppose that there exist $q\geq 2$, $k\geq1$ and $C_k'>0$ 
independent of $\tilde x\in \Omega$ and 
satisfying 
\begin{equation}\label{eq:assumpq2}
	\sup_{\tau\geq \tau_*+2}\int_{\tau-1-2^{-(k-1)}}^{\tau} 
	\left( \int_{\Omega(\sigma)} |w_{(\tilde x,T)}|^{p+1} 
	\rho \psi_{k-1}^4 d\eta \right)^q
	d\sigma 
	\leq C_k'. 
\end{equation}
Then there exists $C_k''>0$ depending only on 
$k$, $n$, $p$, $q$, $\Omega$, $R$, $\tau_*$, $C_*$, $C_*'$ and $C_k'$ 
(in particular, independent of $\tilde x\in \Omega$) such that 
\begin{equation}\label{eq:assumpqp1}
	\sup_{\tau\geq \tau_*+2}\int_{\tau-1-2^{-k}}^{\tau} 
	\left( \int_{\Omega(\sigma)} |w_{(\tilde x,T)}|^{p+1} \rho \psi_k^4 d\eta 
	\right)^{q+\frac{1}{p+1}} d\sigma \leq C_k''. 
\end{equation}
\end{proposition}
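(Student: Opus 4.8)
The plan is to run a bootstrap in the spirit of Giga--Matsui--Sasayama, using the inequality of Lemma \ref{lem:w2deri} to convert the hypothesis \eqref{eq:assumpq2} into a bound on $w_\tau$ localized by $\psi_{k-1}$, and then interpolate. The starting point is the identity \eqref{eq:locwp1eq} (with $k$ there replaced by $k-1$), which after the Young inequality applied to the cross terms $\int w w_\tau \rho \psi_{k-1}^4$ and $\int w\rho\psi_{k-1}^3\nabla w\cdot\nabla\psi_{k-1}$ gives, for $\tau\ge\tau_*+2$,
\[
	\int_{\tau-1-2^{-(k-1)}}^{\tau}\!\!\int_{\Omega(\sigma)} |\partial_\tau w|^2 \rho\psi_{k-1}^4\, d\eta\,d\sigma
	\le C\!\int\!\!\int |w|^{p+1}\rho\psi_{k-1}^4 + \frac{4^kC}{R^2}\bigl(\text{bounded terms}\bigr),
\]
where the integrated $\frac12\frac{d}{d\tau}\int|w|^2\rho\psi_{k-1}^4$ is controlled by Lemma \ref{lem:locEunibdd} together with \eqref{eq:wL2Linfty}, and the gradient term is absorbed using \eqref{eq:cElocdef} and again Lemma \ref{lem:locEunibdd}. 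By the hypothesis \eqref{eq:assumpq2} (with $q\ge2\ge1$, so $L^q\hookrightarrow L^1$ on a bounded interval) the right-hand side is bounded by a constant of the allowed type. So I would first record a uniform bound on $\iint |\partial_\tau w|^2\rho\psi_{k-1}^4$ over any window $(\tau-1-2^{-(k-1)},\tau)$.

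Next, Lemma \ref{lem:w2deri} applied with $\psi_k$ gives pointwise in $\tau$
\[
	\int |w|^{p+1}\rho\psi_k^4 \le C\int |w\,w_\tau|\rho\psi_k^4 + \frac{4^kC}{R^2}\bigl(\text{bounded}\bigr)
	\le C\Bigl(\int |w|^{p+1}\rho\psi_k^4\Bigr)^{\frac{p}{p+1}}\Bigl(\int |w_\tau|^{p+1}\rho\psi_k^4\Bigr)^{\frac{1}{p+1}} + (\text{bounded}),
\]
by H\"older with exponents $\frac{p+1}{p}$ and $p+1$ on the $\psi_k^4$-weighted measure. Absorbing the first factor (it carries power $\frac{p}{p+1}<1$, so Young's inequality with a small constant works, the bounded remainder being harmless) yields
\[
	\int |w|^{p+1}\rho\psi_k^4 \le C\int |w_\tau|^{p+1}\rho\psi_k^4 + (\text{bounded}).
\]
Now I would interpolate the right-hand side between the $L^2$ bound on $w_\tau$ just obtained and the almost-pointwise $L^\infty$-in-$\eta$ control coming from \eqref{eq:wL2Linfty}; more precisely, since $\psi_k\le\psi_{k-1}$ and $\psi_{k-1}\equiv1$ on $\supp\psi_k$, one has $\int|w_\tau|^{p+1}\rho\psi_k^4 \le \bigl(\int|w_\tau|^2\rho\psi_{k-1}^4\bigr)^{\theta}\cdot M^{1-\theta}$ for a suitable $M$ bounded via \eqref{eq:wL2Linfty} and \eqref{eq:cEt1tbddstar}, where $\theta$ is chosen so that raising to the power $q+\frac1{p+1}$ and integrating in $\sigma$ against the windowed $L^2_{\eta,\tau}$ bound on $w_\tau$ closes; the numerology is exactly the gain of $\frac1{p+1}$ in the outer exponent.

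The main obstacle I anticipate is bookkeeping the cutoff radii and the time windows so that each application of a preceding estimate is legitimate: Lemma \ref{lem:w2deri} and Lemma \ref{lem:locEunibdd} require $\tau\ge\tau_*+2$, the window must shrink from $1+2^{-(k-1)}$ to $1+2^{-k}$ (leaving room to apply the mean value / integration step), and the constants $\frac{4^kC}{R^2}$ must stay of the declared dependence. A secondary technical point is verifying that absorbing the $L^2_{\eta,\tau}$ bound on $\partial_\tau w$ localized at level $k-1$ into the desired estimate at level $k$ produces exactly the exponent $q+\frac1{p+1}$; this is where the choice of $\theta$ and the H\"older pairing on the time integral must be done carefully, but it is a routine computation once the structure above is in place. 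Finally, collecting all bounded remainder terms and taking the supremum over $\tau\ge\tau_*+2$ (the bounds being uniform in $\tilde x\in\Omega$ throughout) gives $C_k''$ with the stated dependence, completing the proof.
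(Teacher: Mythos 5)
The proposal has a genuine gap at its central step. After applying Lemma \ref{lem:w2deri} you arrive at a bound for $\int|w|^{p+1}\rho\psi_k^4\,d\eta$ in terms of $\int|ww_\tau|\rho\psi_k^4\,d\eta$, and you then use H\"older to separate $w$ from $w_\tau$. First, the H\"older pairing you wrote is off: $\int|w w_\tau|\,d\rho_k \leq \|w\|_{L^{p+1}_{\rho_k}}\|w_\tau\|_{L^{(p+1)/p}_{\rho_k}}$, so the exponent that lands on the $w$-integral is $1/(p+1)$ and the one on $w_\tau$ is $p/(p+1)$, with $w_\tau$ measured in $L^{(p+1)/p}$ rather than $L^{p+1}$; the version you wrote down, $(\int|w|^{p+1})^{p/(p+1)}(\int|w_\tau|^{p+1})^{1/(p+1)}$, is not a consequence of H\"older. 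But the deeper problem is the next step: to close, you propose to control the $w_\tau$-integral by interpolating between the $L^2$-in-$\eta$ bound (eq:wtauL2) and an ``almost-pointwise $L^\infty$-in-$\eta$'' control of $w_\tau$ ``coming from (eq:wL2Linfty) and (eq:cEt1tbddstar).'' Neither of those estimates says anything about $w_\tau$ in $L^\infty_\eta$: (eq:wL2Linfty) is an $L^\infty_\tau L^2_\eta$ bound on $w$ itself, and (eq:cEt1tbddstar) bounds the time-averaged localized energy, which controls $\nabla w$ and $w$ in $L^2_\eta$, again not $w_\tau$. Without an $L^\infty$ endpoint the interpolation you invoke does not exist, and the argument stops there.

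This is precisely the difficulty the paper's proof is engineered to overcome, and your plan omits the two ingredients that do the real work. First, the hypothesis \eqref{eq:assumpq2} is upgraded to a \emph{uniform-in-time} $L^{\lambda}_\eta$ bound on $w$ (with $2<\lambda<p+1$) via the Cazenave--Lions embedding $L^2_\tau H^1_\eta\cap L^{(p+1)q}_\tau L^{p+1}_\eta\hookrightarrow L^\infty_\tau L^\lambda_\eta$ (Lemma \ref{lem:CLinter}); your argument never uses this, so you cannot absorb the $w$-factor and still gain in the exponent. Second, the needed control of $\|w_\tau\|_{L^{(p+1)/p}_{\rho_k}}$ is obtained by maximal parabolic regularity (Lemma \ref{lem:maximal}, quoting \cite[Theorem 2.1]{GMS04s}) applied to $v=w\rho^{p/(p+1)}\psi_k^{4p/(p+1)}\zeta_k$, which converts an $L^{p\beta/(p+1)}_\tau L^{(p+1)/p}_\eta$ bound on $w_\tau$ into one on $\nabla w$ in $L^2_\eta$, and the latter is closed by the localized energy bound of Lemma \ref{lem:locEunibdd}. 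The gain $q\mapsto q+\tfrac{1}{p+1}$ in the outer exponent comes exactly from the numerology tying the Cazenave--Lions exponent $\lambda$ to the interpolation parameter $\theta$ in (eq:inter), balanced against the maximal-regularity exponent $\beta$. Without maximal regularity (or some substitute giving $L^{(p+1)/p}$-control of $w_\tau$), there is no source for the improvement over the a priori $L^2$ bound (eq:wtauL2), and the bootstrap does not advance.
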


In what follows, 
the constants $C_k>0$ may change although in the same line 
and depend only on $k$, $n$, $p$, $q$, $\Omega$, $R$, $\tau_*$, $C_*$, $C_*'$ 
and $C_k'$. 
Moreover, the constants 
$C>0$ depend only on $n$, $p$, $q$, $\Omega$, $R$, $\tau_*$, $C_*$ and $C_*'$.

To prove \eqref{eq:assumpqp1} under \eqref{eq:assumpq2}, we estimate 
$\int_{\Omega(\tau)} |w|^{p+1} \rho \psi_k^4 d\eta$. 
By Lemma \ref{lem:w2deri}, for each $\tau\geq \tau_*+2$, 
\begin{equation}\label{eq:p122tau}
\begin{aligned}
	\int_{\Omega(\tau)} |w|^{p+1} \rho \psi_k^4 d\eta
	&\leq 
	C  \int_{\Omega(\tau)} |w w_\tau| \rho \psi_k^4 d\eta 
	+ C_k \\
	&= 
	C \| [w w_\tau \rho \psi_k^4](\cdot,\tau) \|_{L_\eta^1}
	+ C_k, 
\end{aligned}
\end{equation} 
where $L_\eta^1:=L^1(\Omega(\tau))$. 
For $p>1$ and $q\geq 2$, let 
\begin{equation}\label{eq:lamchoice}
	\lambda := p+1 - \frac{p-1}{q+1}-\eps, 
\end{equation}
where we take $\eps>0$ such that  
$2<\lambda<p+1$ holds. 
Note that $(p+1)/p < \lambda/(\lambda-1) <2$ holds and that 
$\eps$ will be replaced with a smaller constant depending only on 
$p$ and $q$.
Then, the H\"older inequality and the interpolation inequality 
with respect to the measure $d\rho_k:= \rho\psi_k^4 d\eta$ 
show that 
\begin{equation}\label{eq:inter}
\begin{aligned}
	&\| [w w_\tau \rho  \psi_k^4](\cdot,\tau) \|_{L_\eta^1}
	\leq 
	\| w \rho^\frac{1}{\lambda} \psi_k^\frac{4}{\lambda}\|_{L_\eta^\lambda}
	\| w_\tau \rho^\frac{\lambda-1}{\lambda}\psi_k^\frac{4(\lambda-1)}{\lambda}
	\|_{L_\eta^\frac{\lambda}{\lambda-1}} \\
	&= 
	\| w \rho^\frac{1}{\lambda} \psi_k^\frac{4}{\lambda}\|_{L_\eta^\lambda}
	\| w_\tau \|_{L_{\rho_k}^\frac{\lambda}{\lambda-1} }
	\leq 
	\| w \rho^\frac{1}{\lambda} \psi_k^\frac{4}{\lambda}\|_{L_\eta^\lambda}
	\| w_\tau \|_{L_{\rho_k}^\frac{p+1}{p}}^\theta 
	\| w_\tau \|_{L_{\rho_k}^2}^{1-\theta}, 
\end{aligned}
\end{equation}
where $L_{\rho_k}^{\lambda/(\lambda-1)}$ 
is the $L^{\lambda/(\lambda-1)}(\Omega(\tau))$ space 
with respect to the measure $d\rho_k= \rho\psi_k^4 d\eta$ and 
\begin{equation}\label{eq:thetadef}
	\theta:= \frac{(p+1)(\lambda-2)}{(p-1)\lambda}, 
	\quad 
	1-\theta = 
	\frac{2(p+1-\lambda)}{(p-1)\lambda}. 
\end{equation}
We show the following estimate 
by a Cazenave--Lions \cite{CL84} type interpolation.

\begin{lemma}\label{lem:CLinter}
Suppose that \eqref{eq:assumpq2} holds.  
Then, 
\[
	\sup_{\tau\in[\tilde \tau-1-2^{-k},\tilde \tau]} 
	\| [w \rho^\frac{1}{\lambda} \psi_k^\frac{4}{\lambda}](\cdot,\tau)
	\|_{L_\eta^\lambda} 
	\leq C_k
	\quad \mbox{ for }\tilde \tau\geq \tau_*+2, 
\]
where $C_k>0$ is independent of $\tilde \tau$ 
and $L_\eta^\lambda:=L^\lambda(\Omega(\tau))$.
\end{lemma}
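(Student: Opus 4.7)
The plan is to combine a mean value theorem argument at the level of time integrals with a forward propagation in time, with the Cazenave--Lions interpolation \eqref{eq:inter} supplying the key closing estimate.

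First I would exploit the pointwise-in-time H\"older interpolation in space: since $2<\lambda<p+1$, writing $|w|^\lambda=|w|^{2(1-\mu)}|w|^{(p+1)\mu}$ with $\mu:=(\lambda-2)/(p-1)\in(0,1)$ and applying H\"older yields
\[
\int_{\Omega(\sigma)}|w|^\lambda\rho\psi_k^4 d\eta\leq\Bigl(\int|w|^2\rho\psi_k^4 d\eta\Bigr)^{1-\mu}\Bigl(\int|w|^{p+1}\rho\psi_k^4 d\eta\Bigr)^\mu.
\]
The $L^2$ factor is uniformly bounded in $\sigma$ by \eqref{eq:wL2Linfty}, and since $\psi_k\leq\psi_{k-1}$ the $L^{p+1}$ factor is dominated by $\int|w|^{p+1}\rho\psi_{k-1}^4 d\eta$. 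Hence
\[
f(\sigma):=\int_{\Omega(\sigma)}|w|^\lambda\rho\psi_k^4 d\eta\leq C\Bigl(\int|w|^{p+1}\rho\psi_{k-1}^4 d\eta\Bigr)^\mu.
\]

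Next I would apply the mean value theorem for integrals on the sub-interval $I_0:=[\tilde\tau-1-2^{-(k-1)},\tilde\tau-1-2^{-k}]$, which has length $2^{-k}$, lies inside the interval of hypothesis, and lies strictly to the left of the target interval $[\tilde\tau-1-2^{-k},\tilde\tau]$. H\"older in time with exponents $q/\mu>1$ and $q/(q-\mu)$ (valid since $0<\mu<1\leq q$), together with the assumption \eqref{eq:assumpq2}, gives
\[
\int_{I_0}f(\sigma)\,d\sigma\leq C|I_0|^{1-\mu/q}(C_k')^{\mu/q}\leq C_k,
\]
so, since $f$ is continuous in time by \eqref{eq:regu}, there exists $\tau_0\in I_0$ with $f(\tau_0)\leq C_k$.

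Finally I would propagate this pointwise bound from $\tau_0$ to every $\tau\in[\tilde\tau-1-2^{-k},\tilde\tau]$. Since $\tau-\tau_0\leq 1+2^{-(k-1)}\leq 2$, I would write $f(\tau)=f(\tau_0)+\int_{\tau_0}^\tau f'(\sigma)\,d\sigma$, compute
\[
f'(\sigma)=\lambda\int_{\Omega(\sigma)}|w|^{\lambda-2}w w_\tau\rho\psi_k^4 d\eta
\]
by using the equation \eqref{eq:wrhoeq} and integrating by parts, and bound the resulting time integral by combining the interpolation \eqref{eq:inter}--\eqref{eq:thetadef} with the space-time bound \eqref{eq:wtauL2} on $w_\tau$, the localized Giga--Kohn bound from Lemma \ref{lem:locEunibdd}, and the assumption \eqref{eq:assumpq2}. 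Taking the $\lambda$-th root then gives the desired $L^\lambda$ bound.

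The hard part will be this propagation step. Substituting \eqref{eq:wrhoeq} for $w_\tau$ in $f'$ and integrating by parts produces a negative gradient term $-(\lambda-1)\int|w|^{\lambda-2}|\nabla w|^2\rho\psi_k^4$, a localization term involving $\nabla\psi_k$, and a nonlinear term $\int|w|^{\lambda+p-1}\rho\psi_k^4$ with $\lambda+p-1>p+1$; the last cannot be dominated by a direct pointwise interpolation between $L^2$ and $L^{p+1}$. Closing the estimate will require absorbing part of the nonlinear term into the good gradient term and exploiting the Cazenave--Lions interpolation \eqref{eq:inter}--\eqref{eq:thetadef} to trade the nonlinear growth against the space-time $L^2$ control of $w_\tau$ and the time integrability of $\int|w|^{p+1}\rho\psi_{k-1}^4$ coming from the hypothesis.
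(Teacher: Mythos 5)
Your first two steps (the pointwise-in-time H\"older interpolation between $L^2_{\rho_k}$ and $L^{p+1}_{\rho_k}$, plus the mean value theorem to locate a time $\tau_0$ with $f(\tau_0)\leq C_k$) are fine, but the propagation step contains a genuine gap that you acknowledge without resolving, and the resolution you sketch does not work. Substituting the equation \eqref{eq:wrhoeq} into $f'(\sigma)=\lambda\int|w|^{\lambda-2}ww_\tau\rho\psi_k^4\,d\eta$ produces the term $\int|w|^{\lambda+p-1}\rho\psi_k^4\,d\eta$, whose exponent $\lambda+p-1$ can be as large as nearly $2p>p+1$; none of the available estimates (\eqref{eq:wtauL2}, \eqref{eq:wL2Linfty}, \eqref{eq:assumpq2}, Lemma \ref{lem:locEunibdd}) controls such a power. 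Absorbing it into the good term $-(\lambda-1)\int|w|^{\lambda-2}|\nabla w|^2\rho\psi_k^4$ would require a weighted Sobolev/Gagliardo--Nirenberg inequality, i.e.\ exactly the dimension-dependent restriction the paper is deliberately avoiding in the bootstrap (the whole point of Proposition \ref{pro:boot} is that it holds for all $p>1$). The interpolation \eqref{eq:inter} cannot close this either: it estimates $\int|ww_\tau|\rho\psi_k^4$, not $\int|w|^{\lambda-1}|w_\tau|\rho\psi_k^4$, and even avoiding the equation and applying Cauchy--Schwarz directly to $f'$ leads to $\|w\|_{L^{2(\lambda-1)}_{\rho_k}}^{\lambda-1}$, which is only interpolable between $L^\lambda$ and $L^{p+1}$ when $2(\lambda-1)\leq p+1$, i.e.\ $\lambda\leq(p+3)/2$ --- too restrictive, since the application in Proposition \ref{pro:lessp1} needs $\lambda$ arbitrarily close to $p+1$.

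The paper avoids all of this by not differentiating $\int|w|^\lambda$ at all. It applies a known space-time embedding of Cazenave--Lions type (\cite[(2.9)]{Qu03}, \cite[(51.6)]{QSbook2}) to the single cut-off function $v=w\rho^{1/\lambda}\psi_k^{4/\lambda}\zeta_k$ extended by zero to $\R^n$: for $2<\lambda<p+1-(p-1)/(q+1)$ one has
\[
\sup_{\tau}\|v(\cdot,\tau)\|_{L^\lambda(\R^n)}
\leq C\|v_\tau\|_{L^2(\R^n\times[\tilde\tau-2,\tilde\tau])}
+C\|v\|_{L^{(p+1)q}([\tilde\tau-2,\tilde\tau];L^{p+1}(\R^n))},
\]
and the two right-hand quantities are exactly what is available: the first from \eqref{eq:wtauL2} and \eqref{eq:wL2Linfty} after the elementary observation $\rho^{2/\lambda}\psi_k^{8/\lambda}\leq C_k\rho$ on $\supp\psi_k$, the second directly from the hypothesis \eqref{eq:assumpq2}. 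If you want to proceed along your lines you would essentially have to reprove that embedding for the weighted localized quantity, which is where the difficulty you flag lives; as written, your proof is incomplete.
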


\begin{proof}
Let $\tilde \tau\geq \tau_*+2$. 
We prepare a cut-off function 
$\zeta_k(\tau):=\tilde \zeta_k(\tau-\tilde \tau)$. 
Here we take $\tilde \zeta_k\in C^\infty(\R)$ such that 
$\tilde \zeta_k(\sigma)=0$ for $\sigma \leq -1-2^{-(k-1)}$, 
$0\leq \tilde \zeta_k(\sigma) \leq 1$ for $-1-2^{-(k-1)}< \sigma < -1-2^{-k}$ and 
$\tilde \zeta_k(\sigma) = 1$ for $\sigma \geq -1-2^{-k}$. 
Note that 
$\supp \zeta_k\subset [\tilde \tau-1-2^{-(k-1)},\tilde \tau]$ 
and that bounds of the derivatives of $\zeta_k$ depend only on $k$ 
and are independent of $\tilde \tau$. 
Taking $w=0$ on $\partial \Omega(\tau)$ into account, 
we extend $w(\cdot,\tau)$ by $0$ to the whole space $\R^n$ 
in the proof of this lemma. 
Then, 
\[
\begin{aligned}
	\sup_{\tau\in[\tilde \tau-1-2^{-k},\tilde \tau]} 
	\| [w \rho^\frac{1}{\lambda} \psi_k^\frac{4}{\lambda}](\cdot,\tau)\|_{L_\eta^\lambda} 
	\leq 
	\sup_{\tau\in[\tilde \tau-2,\tilde \tau]} 
	\| [w \rho^\frac{1}{\lambda} \psi_k^\frac{4}{\lambda} 
	\zeta_k](\cdot,\tau)
	\|_{L^\lambda(\R^n)} 
\end{aligned}
\]

Since $2<\lambda<p+1-(p-1)/(q+1)$ by \eqref{eq:lamchoice}, 
we can apply a Cazenave-Lions \cite{CL84} type interpolation 
(see \cite[(2.9)]{Qu03} and also \cite[(51.6)]{QSbook2} 
for the embedding used here). 
Then, there exists a constant $C'>0$ depending only on 
$n$, $p$, $q$, $\lambda$ and the length of the time-interval 
$|[\tilde \tau-2, \tilde \tau]|=2$ such that 
\[
\begin{aligned}
	&\sup_{\tau\in[\tilde \tau-2,\tilde \tau]} 
	\| [w \rho^\frac{1}{\lambda} \psi_k^\frac{4}{\lambda} 
	\zeta_k](\cdot,\tau)
	\|_{L^\lambda(\R^n)} 
	\leq C' I_k^{(1)} + C' I_k^{(2)}, \\
	& 
	I_k^{(1)}:= \| (w \rho^\frac{1}{\lambda} 
	\psi_k^\frac{4}{\lambda} \zeta_k)_\tau
	\|_{L^2(\R^n\times [\tilde \tau-2,\tilde \tau])}, \\
	&I_k^{(2)}:= \| w \rho^\frac{1}{\lambda} 
	\psi_k^\frac{4}{\lambda} \zeta_k
	\|_{L^{(p+1)q}([\tilde \tau-2,\tilde \tau]; L^{p+1}(\R^n))}. 
\end{aligned}
\]

Let us estimate $I_k^{(1)}$ and $I_k^{(2)}$. 
Recall the $0$-extension of $w$ 
and $\supp \zeta_k\subset [\tilde \tau-1-2^{-(k-1)},\tilde \tau]$. 
Then by abbreviating $\iint \ldots 
= \int_{\tilde \tau-1-2^{-(k-1)}}^{\tilde \tau} \int_{\Omega(\sigma)} 
\ldots d\eta d\sigma$, 
\[
\begin{aligned}
	(I_k^{(1)})^2 \leq \iint |w_\tau|^2 \rho^\frac{2}{\lambda} 
	\psi_k^\frac{8}{\lambda} \zeta_k^2 
	+ \iint |w|^2 \rho^\frac{2}{\lambda} 
	\psi_k^\frac{8}{\lambda} |\partial_\tau \zeta_k|^2. 
\end{aligned}
\]
Since $\rho(\eta)=e^{-|\eta|^2/4}$ and 
$\psi_k=0$ in $\R^n\setminus \overline{B_{R/2^{k-1}}}$, we have 
\[
	\rho^\frac{2}{\lambda} 
	\psi_k^\frac{8}{\lambda}
	= ( e^{\frac{\lambda-2}{4\lambda}|\eta|^2} 
	\psi_k^\frac{8}{\lambda} ) \rho
	\leq C_k \psi_k^\frac{8}{\lambda} \rho 
	\leq C_k \rho. 
\]
This together with $\zeta_k\leq 1$, $|\partial_\tau\zeta_k|\leq C_k$, 
\eqref{eq:wtauL2} and \eqref{eq:wL2Linfty} yields  
$I_k^{(1)} \leq C_k$. 
As for $I_k^{(2)}$, 
by $\lambda<p+1$ and $\psi_{k-1}=1$ on $\supp \psi_k$, 
we see from \eqref{eq:assumpq2} that 
\[
\begin{aligned}
	(I_k^{(2)})^{(p+1)q} 
	&= \int_{\tilde \tau-2}^{\tilde \tau} 
	\left( \int_{\Omega(\sigma)} |w|^{p+1} \rho^\frac{p+1}{\lambda} 
	\psi_k^\frac{4(p+1)}{\lambda} \zeta_k^{p+1} d\eta \right)^q d\sigma \\
	&\leq 
	\int_{\tilde \tau-1-2^{-(k-1)}}^{\tilde \tau} 
	\left( \int_{\Omega(\sigma)} |w|^{p+1} \rho
	\psi_{k-1}^4 
	\chi_{\supp \psi_k} 
	d\eta \right)^q d\sigma \leq C_k, 
\end{aligned}
\]
where $\chi_{\supp \psi_k}$ is the characteristic function on $\supp \psi_k$. 
Note that $C_k>0$ is independent of $\tilde \tau$. 
Then, the desired estimate follows. 
\end{proof}

By \eqref{eq:p122tau}, \eqref{eq:inter} and Lemma \ref{lem:CLinter}, 
we have 
\begin{equation}\label{eq:midcalc123}
	\int_{\Omega(\tau)} |w|^{p+1} \rho \psi_k^4 d\eta
	\leq 
	C_k \| w_\tau(\cdot,\tau) 
	\|_{L_{\rho_k}^\frac{p+1}{p}}^\theta 
	\| w_\tau (\cdot,\tau) 
	\|_{L_{\rho_k}^2}^{1-\theta} 
	+ C_k 
\end{equation}
for $\tilde \tau-1-2^{-k}< \tau< \tilde \tau$ with  
$\tilde \tau\geq \tau_*+2$, where $L_{\rho_k}^{\lambda/(\lambda-1)}$ 
is the $L^{\lambda/(\lambda-1)}(\Omega(\tau))$ space 
with respect to the measure $d\rho_k= \rho\psi_k^4 d\eta$. 
We give further estimates.

\begin{lemma}\label{lem:cazeLio}
Suppose that \eqref{eq:assumpq2} holds.  
Then, for $\tilde \tau\geq \tau_*+2$, 
\[
\begin{aligned}
	&\int_{\tilde \tau-1-2^{-k}}^{\tilde \tau} \left( 
	\int_{\Omega(\sigma)} |w|^{p+1} \rho \psi_k^4 d\eta
	\right)^{q+\frac{1}{p+1}} d\sigma  \\
	&\leq 
	C_k \left( \int_{\tilde \tau-1-2^{-k}}^{\tilde \tau} 
	\left( \int_{\Omega(\sigma)} 
	| w_\tau|^\frac{p+1}{p}  \rho  \psi_k^4 
	d\eta \right)^{  \frac{p \beta}{p+1} } 
	d\sigma 
	\right)^\frac{1}{r} 
	+ C_k, 
\end{aligned}
\]
where $C_k>0$ is independent of $\tilde \tau$. 
The exponents $r, \beta>1$ are given 
in \eqref{eq:rrdashdef} and \eqref{eq:betadef1} below, respectively, 
and depend on $\eps$ in \eqref{eq:lamchoice}. 
Here, $\eps$ is chosen as a small constant 
depending only on $p$ and $q$.
\end{lemma}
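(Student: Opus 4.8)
The plan is to take the pointwise-in-$\tau$ inequality \eqref{eq:midcalc123}, raise it to the power $q+\tfrac1{p+1}$, integrate over $\sigma\in(\tilde\tau-1-2^{-k},\tilde\tau)$, and then split the resulting space-time integral by H\"older's inequality in time so that the factor carrying $\|w_\tau\|_{L_{\rho_k}^2}$ is absorbed into the global bound \eqref{eq:wtauL2}; what survives is exactly a time integral of a power of $\int_{\Omega(\sigma)}|w_\tau|^{(p+1)/p}\rho\psi_k^4\,d\eta$, i.e.\ the right-hand side of the claim.

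Write $s:=q+\tfrac1{p+1}$, so $s\ge2$. Raising \eqref{eq:midcalc123} to the power $s$ and using $(a+b)^s\le2^{s-1}(a^s+b^s)$ gives, for $\tilde\tau-1-2^{-k}<\tau<\tilde\tau$,
\[
\Big(\int_{\Omega(\tau)}|w|^{p+1}\rho\psi_k^4\,d\eta\Big)^{s}\le C_k\,\|w_\tau\|_{L_{\rho_k}^{(p+1)/p}}^{\theta s}\,\|w_\tau\|_{L_{\rho_k}^2}^{(1-\theta)s}+C_k.
\]
I then integrate over the interval (of length $1+2^{-k}\le2$) and apply H\"older's inequality in $\sigma$ with exponents $r,r'$ determined by
\[
r':=\frac{2}{(1-\theta)s},\qquad \frac1r+\frac1{r'}=1,\qquad \beta:=\theta s r,
\]
which, upon noting $\|w_\tau\|_{L_{\rho_k}^{(p+1)/p}}^{\theta s r}=\big(\int_{\Omega(\tau)}|w_\tau|^{(p+1)/p}\rho\psi_k^4\,d\eta\big)^{p\beta/(p+1)}$, yields
\[
\int_{\tilde\tau-1-2^{-k}}^{\tilde\tau}\Big(\int_{\Omega(\sigma)}|w|^{p+1}\rho\psi_k^4\,d\eta\Big)^{s}d\sigma\le C_k\,A^{1/r}\,B^{1/r'}+C_k,
\]
where $A$ is the time integral on the right-hand side of the lemma and $B:=\int_{\tilde\tau-1-2^{-k}}^{\tilde\tau}\|w_\tau\|_{L_{\rho_k}^2}^2\,d\sigma$. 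Since $\psi_k^4\le1$ and the interval has length $\le2$, \eqref{eq:wtauL2} gives $B\le\int_{\tilde\tau-1-2^{-k}}^{\tilde\tau}\int_{\Omega(\sigma)}|w_\tau|^2\rho\,d\eta\,d\sigma\le C(C_*+C_*')$, a constant of the allowed type, so $B^{1/r'}\le C_k$ and the claim follows, with $r,r'$ recorded in \eqref{eq:rrdashdef} and $\beta$ in \eqref{eq:betadef1}. All constants stay independent of $\tilde\tau\ge\tau_*+2$ since $C_k$ already absorbs those of Lemma~\ref{lem:CLinter} and \eqref{eq:wtauL2}.

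The delicate point — and the step I expect to be the main obstacle — is checking that the exponents are admissible, namely $r>1$, $r'>1$, and, most importantly, $\beta>1$ (this strict inequality is what ultimately produces the gain $q\mapsto q+\tfrac1{p+1}$ in the bootstrap of Proposition~\ref{pro:boot}). Here $r>1$ is automatic since $(1-\theta)s>0$, and $r'>1$ is equivalent to $(1-\theta)s<2$, which follows from $2<\lambda<p+1$, $q\ge2$ and \eqref{eq:thetadef} by an elementary computation. For $\beta=\theta s r>1$ one lets $\eps\to0$ in \eqref{eq:lamchoice}, so $\lambda\to p+1-\tfrac{p-1}{q+1}$, evaluates the limiting values of $\theta$, $r$ and $\beta$ as explicit rational functions of $p$ and $q$, and verifies that this limiting value of $\beta$ exceeds $1$ for every $p>1$ and $q\ge2$ via a polynomial inequality; by continuity $r,r',\beta>1$ then persist once $\eps>0$ is fixed small depending only on $p$ and $q$, which also preserves $2<\lambda<p+1$.
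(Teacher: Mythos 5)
Your proposal follows essentially the same route as the paper: raise \eqref{eq:midcalc123} to the power $s=q+\tfrac{1}{p+1}$, integrate in time, apply H\"older in $\sigma$ with the conjugate pair $(r,r')$ so that the $L^2_{\rho_k}$ factor appears with exponent $(1-\theta)s\,r'=2$ and is absorbed via \eqref{eq:wtauL2}, and then verify $\beta=\theta s r>1$ via \eqref{eq:betadef1} by taking $\eps$ small. Your phrasing ``$r>1$ is automatic since $(1-\theta)s>0$'' is slightly loose (one needs $0<(1-\theta)s<2$ for both $r>1$ and $r'>1$), but you immediately check $(1-\theta)s<2$, so the substance is correct and matches the paper.
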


\begin{proof}
Let $\tilde \tau\geq \tau_*+2$. 
By  \eqref{eq:midcalc123},  
\[
\begin{aligned}
	&\int \left( 
	\int_{\Omega(\sigma)} |w|^{p+1} \rho \psi_k^4 d\eta
	\right)^{q+\frac{1}{p+1}} d\sigma  \\
	&\leq 
	C_k \int  \| w_\tau(\cdot,\sigma) 
	\|_{L_{\rho_k}^\frac{p+1}{p}}^{\theta (q+\frac{1}{p+1})}
	\| w_\tau(\cdot,\sigma) 
	 \|_{L_{\rho_k}^2}^{(1-\theta) (q+\frac{1}{p+1})} d\sigma 
	+ C_k, 
\end{aligned}
\]
where $\int \ldots d\sigma
= \int_{\tilde \tau -1-2^{-k}}^{\tilde \tau} \ldots d\sigma$. 
Set 
\begin{equation}\label{eq:rrdashdef}
	r:= \frac{2(p+1)}{2(p+1)-(1-\theta)(pq+q+1)}, \quad 
	r':= \frac{2(p+1)}{(1-\theta)(pq+q+1)}. 
\end{equation}
By \eqref{eq:lamchoice} and \eqref{eq:thetadef}, 
we have $r,r'>1$. 
Then, \eqref{eq:wtauL2} and 
$d\rho= \rho\psi_k^4 d\eta$ yield  
\[
\begin{aligned}
	&\int \left( 
	\int_{\Omega(\sigma)} |w|^{p+1} \rho \psi_k^4 d\eta
	\right)^{q+\frac{1}{p+1}} d\sigma \\
	&\leq 
	C_k \left( \int  \| w_\tau(\cdot,\sigma) 
	\|_{L_{\rho_k}^\frac{p+1}{p}}^{\theta (q+\frac{1}{p+1}) r} d\sigma 
	\right)^\frac{1}{r} 
	\left( \int \| w_\tau(\cdot,\sigma) 
	 \|_{L_{\rho_k}^2}^2 d\sigma \right)^\frac{1}{r'} 
	+ C_k \\
	&\leq 
	C_k \left( \int  
	\left(  \int_{\Omega(\sigma)} 
	|w_\tau|^\frac{p+1}{p} \rho \psi_k^4 d\eta
	\right)^{\frac{pr\theta}{p+1}(q+\frac{1}{p+1})} d\sigma \right)^\frac{1}{r}
	+ C_k. 
\end{aligned}
\]

As for exponents, we define $\beta$ and compute 
from \eqref{eq:thetadef} and \eqref{eq:lamchoice} that 
\begin{equation}\label{eq:betadef1}
	\beta:=r\theta \left( q+\frac{1}{p+1} \right) 
	= \frac{(p+1)q(pq+q+1)}{p(pq+q+1)+p+1+c_\eps}, 
\end{equation}
where $c_\eps\in \R$ is a constant depending only on $p$, $q$ and $\eps$ 
and satisfying $c_\eps\to0$ as $\eps\to0$.
The condition $\beta>1$ is equivalent to $q/\beta<q$ and to 
\[
	\frac{p}{p+1}
	+ \frac{1}{pq+q+1}
	+ \frac{c_\eps}{(p+1)(pq+q+1)} <q. 
\]
This can be checked for all $p>1$ and $q\geq2$ if 
we take a small $\eps$ depending only on $p$ and $q$, 
and the desired estimate follows. 
\end{proof}

We estimate the right-hand side in the inequality 
in Lemma \ref{lem:cazeLio} by the maximal regularity 
from Giga--Matsui--Sasayama \cite[Theorem 2.1]{GMS04s}.

\begin{lemma}\label{lem:maximal}
Suppose that \eqref{eq:assumpq2} holds.  
Then for $\tilde \tau\geq \tau_*+2$, 
\[
\begin{aligned}
	&\int_{\tilde \tau-1-2^{-k}}^{\tilde \tau} 
	\left( \int_{\Omega(\sigma)} 
	| w_\tau|^\frac{p+1}{p}   \rho  \psi_k^4
	d\eta \right)^\frac{p \beta}{p+1}  d\sigma  \\
	&\leq 
	C_k \int_{\tilde \tau-1-2^{-(k-1)}}^{\tilde \tau} 
	\left( \int_{\Omega(\sigma)} 
	|\nabla w|^2 \rho \psi_{k-1}^4 d\eta \right)^\frac{\beta}{2} d\sigma + C_k, 
\end{aligned}
\]
where $C_k>0$ is independent of $\tilde \tau$. 
The exponent $\beta>1$ is given 
in \eqref{eq:betadef1} 
and depends on $\eps$ in \eqref{eq:lamchoice}. 
Here, $\eps$ is chosen as a small constant 
depending only on $p$ and $q$.
\end{lemma}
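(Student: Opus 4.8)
The plan is to localize $w$ in space and time, apply the weighted maximal regularity estimate of Giga--Matsui--Sasayama \cite[Theorem 2.1]{GMS04s} to the localized function, and then dispatch the resulting source terms: the dangerous one will be the commutator of $L$ with the spatial cut-off, which we feed into the asserted $\nabla w$-term, while the nonlinearity is absorbed by the inductive hypothesis \eqref{eq:assumpq2} and the remaining lower-order terms by the a priori bound \eqref{eq:wL2Linfty}.

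Fix $\tilde\tau\ge\tau_*+2$, write $s:=(p+1)/p\in(1,2)$ and $L:=\Delta-\frac\eta2\cdot\nabla=\rho^{-1}\nabla\cdot(\rho\nabla\,\cdot)$. Reusing the time cut-off $\zeta=\zeta_k$ from the proof of Lemma \ref{lem:CLinter} (so $\zeta\equiv0$ for $\sigma\le\tilde\tau-1-2^{-(k-1)}$, $\zeta\equiv1$ for $\sigma\ge\tilde\tau-1-2^{-k}$, with derivative bounds depending only on $k$), set $v:=\zeta\,\psi_{k-1}^4\,w$ on $\Omega(\sigma)$. Since $L$ differentiates only in $\eta$, a direct computation from \eqref{eq:weq} gives $\partial_\tau v-Lv=F$ on $\Omega(\sigma)$, with $v=0$ on $\partial\Omega(\sigma)$, $v$ vanishing at $\sigma=\tilde\tau-1-2^{-(k-1)}$, and
\[
\begin{aligned}
  F &= -8\,\zeta\,\psi_{k-1}^3\,\nabla w\cdot\nabla\psi_{k-1}\;-\;\zeta\,w\,L(\psi_{k-1}^4) \\
  &\quad +\;\zeta\,\psi_{k-1}^4\Big(|w|^{p-1}w-\tfrac{1}{p-1}w\Big)\;+\;\psi_{k-1}^4\,w\,\partial_\tau\zeta.
\end{aligned}
\]
Because $\psi_{k-1}\equiv1$ on $\supp\psi_k$ and $\zeta\equiv1$ on $[\tilde\tau-1-2^{-k},\tilde\tau]$, we have $v=w$ and $\partial_\tau v=w_\tau$ there, so (using $\psi_k^4\le1$) the left-hand side of the lemma is at most $\int_{\tilde\tau-1-2^{-(k-1)}}^{\tilde\tau}\|\partial_\tau v(\cdot,\sigma)\|_{L^s(\rho\,d\eta)}^\beta\,d\sigma$; the a priori finiteness of all quantities for this fixed $\tilde\tau$ is guaranteed by \eqref{eq:regu}. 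I would then invoke \cite[Theorem 2.1]{GMS04s}, i.e. weighted maximal regularity for $\partial_\tau-L$ on the time-dependent domains $\Omega(\sigma)$, in the mixed-norm space $L^\beta_\sigma L^s_\eta(\rho\,d\eta)$ --- legitimate since $1<s<\infty$ and $1<\beta<\infty$ (the latter established in the proof of Lemma \ref{lem:cazeLio}) --- to get $\|\partial_\tau v\|_{L^\beta_\sigma L^s_\eta(\rho)}\le C\|F\|_{L^\beta_\sigma L^s_\eta(\rho)}$ over $(\tilde\tau-1-2^{-(k-1)},\tilde\tau)$ (if only the equal-exponent estimate is available, one iterates it).

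It then remains to estimate $\|F\|_{L^\beta_\sigma L^s_\eta(\rho)}$ termwise. For the commutator, the identity $\psi_{k-1}^{3s}|\nabla w|^s=(\psi_{k-1}^6|\nabla w|^2)^{s/2}$, H\"older's inequality with $\int_{\R^n}\rho\le C_n$, and $\psi_{k-1}^6\le\psi_{k-1}^4$ give, for each $\sigma$, $\|\zeta\psi_{k-1}^3\nabla w\cdot\nabla\psi_{k-1}\|_{L^s(\rho)}\le C_k\big(\int_{\Omega(\sigma)}|\nabla w|^2\rho\psi_{k-1}^4\,d\eta\big)^{1/2}$, and raising to the $\beta$-th power and integrating over $(\tilde\tau-1-2^{-(k-1)},\tilde\tau)$ yields precisely the asserted right-hand side. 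For the nonlinear term, $ps=p+1$ and $4s\ge4$ give $\|\zeta\psi_{k-1}^4|w|^{p-1}w\|_{L^s(\rho)}^s\le\int_{\Omega(\sigma)}|w|^{p+1}\rho\psi_{k-1}^4\,d\eta$, so its contribution to $\|F\|_{L^\beta_\sigma L^s_\eta(\rho)}^\beta$ is $\int(\int|w|^{p+1}\rho\psi_{k-1}^4)^{p\beta/(p+1)}d\sigma$ over $(\tilde\tau-1-2^{-(k-1)},\tilde\tau)$; since $p\beta/(p+1)\le q$ --- which follows from \eqref{eq:betadef1} once $\eps$, hence $c_\eps$, is small --- H\"older in time together with \eqref{eq:assumpq2} bounds it by $C_k$. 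Finally each of $\zeta w L(\psi_{k-1}^4)$, $\tfrac{1}{p-1}\zeta\psi_{k-1}^4 w$ and $\psi_{k-1}^4 w\,\partial_\tau\zeta$ is pointwise $\le C_k|w|$ on the fixed ball $\overline{B_{R/2^{k-2}}}$ (using $|L(\psi_{k-1}^4)|+|\partial_\tau\zeta|\le C_k$ there), so by H\"older ($s<2$, $\int\rho<\infty$) and \eqref{eq:wL2Linfty} they contribute $\le C_k$. The one genuine difficulty is the maximal regularity step --- verifying that the weighted mixed-norm estimate of \cite{GMS04s} applies on the time-dependent domains $\Omega(\sigma)$ at the exponents $(\beta,s)$ at hand; the rest is weight bookkeeping resting on $\psi_{k-1}^6\le\psi_{k-1}^4$, $4s\ge4$ and $p\beta/(p+1)\le q$. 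All constants are independent of $\tilde\tau$ because $v$ is supported in a fixed spatial ball and a time window of fixed length and the a priori bounds invoked are uniform.
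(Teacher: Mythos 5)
Your overall strategy (localize, apply maximal regularity, dispatch the source terms) is the right template, and the termwise bookkeeping for the source terms is essentially sound, but there is a genuine gap at the single step you flag as ``the one genuine difficulty'': the maximal regularity estimate you invoke is not what \cite[Theorem 2.1]{GMS04s} provides. That theorem is a maximal regularity estimate for the \emph{unweighted} heat operator $\partial_\tau-\Delta$ on time-dependent domains in plain $L^s$ spaces (subject to the domain lying in a small ball $B_{R_0}$, whence the constraint \eqref{eq:Rcond}), not a weighted maximal regularity estimate for the drift operator $L=\Delta-\tfrac{\eta}{2}\cdot\nabla$ in $L^s(\rho\,d\eta)$. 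Your choice $v=\zeta\,\psi_{k-1}^4\,w$ preserves the drift term and leaves the Gaussian weight in the measure, so you would need an independent maximal regularity theorem for $\partial_\tau-L$ in weighted spaces; that is a nontrivial claim and not supplied by the cited reference, so as written the proof does not close.

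The paper avoids this by a different change of unknown: $v:=w\,\rho^{p/(p+1)}\psi_k^{4p/(p+1)}\zeta_k$. The exponent $p/(p+1)$ is tuned so that $|v|^{(p+1)/p}=|w|^{(p+1)/p}\rho\,\psi_k^4\,\zeta_k^{(p+1)/p}$, converting the weighted $L^{(p+1)/p}(\rho\,d\eta)$ norm of $w$ into the \emph{unweighted} $L^{(p+1)/p}$ norm of $v$; and because the weight $\rho^{p/(p+1)}$ is absorbed into $v$ the drift term disappears, leaving $v_\tau-\Delta v=F$. One then applies the unweighted maximal regularity of \cite[Theorem 2.1]{GMS04s} directly. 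The price is that $F$ now also contains $\Delta(\rho^{p/(p+1)}\psi_k^{4p/(p+1)})$ and $\nabla(\rho^{p/(p+1)}\psi_k^{4p/(p+1)})$ commutators (see \eqref{eq:Fcomputation}), but on $\supp\psi_k\subset B_R$ these are harmless and yield, together with the $\zeta_k$ commutator, exactly the $|w|$, $|\nabla w|$ and $|w|^{p}$ source terms you estimated. So your back-end estimates carry over; it is only the front-end transformation that needs to be replaced to match the maximal regularity result that is actually available.
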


\begin{proof}
Let $\tilde \tau\geq \tau_*+2$. 
We use the cut-off function 
$\zeta_k$ in Lemma \ref{lem:CLinter}. 
Set 
\[
	v(\eta,\tau):= w(\eta,\tau)\rho^\frac{p}{p+1}(\eta) 
	\psi_k^\frac{4p}{p+1}(\eta) \zeta_k(\tau), 
	\quad \eta\in \overline{\Omega(\tau)}, \; \tau\geq \tau_*. 
\]
We observe that 
\begin{equation}\label{eq:wtasupzek11}
\begin{aligned}
	&|w_\tau|^\frac{p+1}{p} 
	\rho\psi_k^4 \zeta_k^\frac{p+1}{p}
	= 
	|w_\tau \rho^\frac{p}{p+1} 
	\psi_k^\frac{4p}{p+1} \zeta_k|^\frac{p+1}{p} 
	=|v_\tau - w \rho^\frac{p}{p+1} 
	\psi_k^\frac{4p}{p+1} (\zeta_k)_\tau|^\frac{p+1}{p} \\
	&\leq  
	C|v_\tau|^\frac{p+1}{p} 
	+ C_k|w|^\frac{p+1}{p} \rho 
	\chi_{\supp \psi_k}(\eta) \chi_{\supp \zeta_k}(\tau). 
\end{aligned}
\end{equation}
Moreover, $v$ satisfies 
\begin{equation}\label{eq:vprob}
\left\{ 
\begin{aligned}
	&v_\tau - \Delta v = F, 
	&&\eta \in\Omega(\tau), \; \tau > \tilde \tau -2, \\
	&v(\eta,\tau)= 0, &&\eta\in \partial \Omega(\tau), \; \tau > \tilde \tau -2, \\
	&v(\eta,\tilde \tau-2)= 0, 
	&&\eta\in \Omega(\tau), 
\end{aligned}
\right.
\end{equation}
where $F=F(\eta,\tau)$ is given by  
\[
	F:= 
	( w\rho^\frac{p}{p+1}\psi_k^\frac{4p}{p+1} \zeta_k )_\tau 
	-\Delta ( w\rho^\frac{p}{p+1}\psi_k^\frac{4p}{p+1} \zeta_k ). 
\]

Since $(p+1)/p>1$ and $\beta>1$, 
we can apply the maximal regularity estimate in 
Giga--Matsui--Sasayama \cite[Theorem 2.1]{GMS04s} 
to $v$ of a solution of \eqref{eq:vprob} 
provided that $R$ satisfies the following smallness condition: 
\begin{equation}\label{eq:Rcond}
	0<R<\min\{1, R_0\}. 
\end{equation} 
Here, $R_0>0$ is the constant which is given in \cite[Theorem 2.1]{GMS04s} 
and depends only on $n$, $p$, $\Omega$ and $\beta$. 
Then the referred maximal regularity guarantees the existence of $C_0>0$ 
depending only on 
$n$, $p$, $\Omega$, $\beta$ and the length of the time-interval 
$|[\tilde \tau-2, \tilde \tau]|=2$ such that 
\begin{equation}\label{eq:maximalvv}
\begin{aligned}
	\int_{\tilde \tau-2}^{\tilde \tau} 
	\left( \int_{\Omega(\sigma)} |v_\tau|^\frac{p+1}{p} d\eta 
	\right)^\frac{p \beta}{p+1}  d\sigma 
	\leq 
	C_0 \int_{\tilde \tau-2}^{\tilde \tau} 
	\left( \int_{\Omega(\sigma)} |F|^\frac{p+1}{p} d\eta 
	\right)^\frac{p \beta}{p+1} d\sigma. 
\end{aligned}
\end{equation}

Since $w$ satisfies \eqref{eq:weq}, we see that 
\begin{equation}\label{eq:Fcomputation}
\begin{aligned}
	F&= 
	\left( - \frac{1}{2} \eta\cdot \nabla w
	-\frac{1}{p-1} w + |w|^{p-1}w \right) 
	\rho^\frac{p}{p+1}\psi_k^\frac{4p}{p+1} \zeta_k \\
	&\quad 
	+w\rho^\frac{p}{p+1}\psi_k^\frac{4p}{p+1} \partial_\tau\zeta_k 
	-2\zeta_k\nabla  w\cdot \nabla (\rho^\frac{p}{p+1}\psi_k^\frac{4p}{p+1} ) \\
	&\quad 
	-w\zeta_k \Delta (\rho^\frac{p}{p+1}\psi_k^\frac{4p}{p+1} ), 
\end{aligned}
\end{equation}
and so by $\supp \psi_k\subset B_R$, \eqref{eq:nabpsibd} and 
$\rho(\eta)=e^{-|\eta|^2/4}$, 
\begin{equation}\label{eq:Fcalces}
\begin{aligned}
	&|F|^\frac{p+1}{p} \\
	&\leq 
	\left( |w|^p \rho^\frac{p}{p+1} \psi_k^\frac{4p}{p+1} \zeta_k
	+ C_k (|w|+|\nabla w|) \rho^\frac{p}{p+1} \chi_{\supp \psi_k} 
	\chi_{\supp \zeta_k}
	\right)^\frac{p+1}{p}  \\
	&\leq 
	C |w|^{p+1} \rho \psi_k^4 \chi_{\supp \zeta_k}
	+ C_k ( |w|^\frac{p+1}{p}  +|\nabla w|^\frac{p+1}{p} )
	\rho \chi_{\supp \psi_k}
	\chi_{\supp \zeta_k}. 
\end{aligned}
\end{equation}
From $\zeta_k=1$ on $[\tilde \tau-1-2^{-k}, \infty)$, 
\eqref{eq:wtasupzek11}, 
$\psi_{k-1}^4=1$ on $\supp_{\psi_k}$, 
\eqref{eq:maximalvv}, \eqref{eq:Fcalces} and 
$\supp \zeta_k \subset [\tilde \tau-1-2^{-(k-1)}, \infty)$, 
it follows with $\int \ldots 
= \int_{\Omega(\sigma)} \ldots d\eta$ 
and the abbreviation of $d\sigma$ that 
\[
\begin{aligned}
	&\int_{\tilde \tau-1-2^{-k}}^{\tilde \tau} 
	\left( \int 
	|w_\tau|^\frac{p+1}{p} \rho \psi_k^4  \right)^\frac{p \beta}{p+1}  
	\leq 
	\int_{\tilde \tau-2}^{\tilde \tau} 
	\left( \int 
	|w_\tau|^\frac{p+1}{p} \rho \psi_k^4  
	\zeta_k^\frac{p+1}{p} \right)^\frac{p \beta}{p+1}  \\
	&\leq 
	C\int_{\tilde \tau-2}^{\tilde \tau} 
	\left( \int 
	|v_\tau|^\frac{p+1}{p}  \right)^\frac{p \beta}{p+1}  
	+ C_k\int_{\tilde \tau-2}^{\tilde \tau} 
	\left( \int 
	|w|^\frac{p+1}{p} \rho \psi_{k-1}^4 \chi_{\supp \zeta_k}
	 \right)^\frac{p \beta}{p+1} \\
	&\leq 
	C\int_{\tilde \tau-1-2^{-(k-1)}}^{\tilde \tau} 
	\left( \int 
	|w|^{p+1} \rho \psi_k^4  \right)^\frac{p \beta}{p+1}  \\
	&\quad 
	+ C_k\int_{\tilde \tau-1-2^{-(k-1)}}^{\tilde \tau} 
	\left( \int 
	( |w|^\frac{p+1}{p}+ |\nabla w|^\frac{p+1}{p}) \rho \psi_{k-1}^4
	\right)^\frac{p \beta}{p+1} 
	=: C J_k^{(1)} + C_k J_k^{(2)}. 
\end{aligned}
\]

By \eqref{eq:betadef1}, the condition 
$p\beta/(p+1)<q$ is satisfied for $p>1$ and $q\geq2$ 
if $\eps$ is small depending only on $p$ and $q$, since 
$p\beta/(p+1)<q$ is equivalent to 
\begin{equation}\label{eq:pp1qbeta}
	\frac{p}{(p+1)q}<\frac{1}{\beta} = 
	\frac{p}{(p+1)q} + \frac{1}{(pq+q+1)q} 
	+ \frac{c_\eps}{(p+1)q(pq+q+1)} . 
\end{equation}
Therefore, the H\"older inequality, $\psi_k\leq \psi_{k-1}$ 
and \eqref{eq:assumpq2} yield 
\[
\begin{aligned}
	J_k^{(1)} 
	\leq 
	C_k \left( \int_{\tilde \tau-1-2^{-(k-1)}}^{\tilde \tau} 
	\left( \int
	|w|^{p+1} \rho \psi_{k-1}^4
	d\eta \right)^q d\sigma \right)^\frac{p\beta}{(p+1)q} 
	\leq C_k. 
\end{aligned}
\]
On the other hand, by $(p+1)/p<2$, 
the H\"older inequality and \eqref{eq:wL2Linfty}, 
\[
\begin{aligned}
	J_k^{(2)}&\leq 
	\int_{\tilde \tau-1-2^{-(k-1)}}^{\tilde \tau} 
	\left( \int |\nabla w|^\frac{p+1}{p} \rho \psi_{k-1}^4
	d\eta \right)^\frac{p \beta}{p+1}  d\sigma 
	+ C_k  \\
	&\leq 
	C \int_{\tilde \tau-1-2^{-(k-1)}}^{\tilde \tau} 
	\left( \int |\nabla w|^2 \rho \psi_{k-1}^4
	d\eta \right)^\frac{\beta}{2}  d\sigma 
	+ C_k. 
\end{aligned}
\]
Hence we obtain the desired estimate. 
\end{proof}

We fix $\eps>0$ in \eqref{eq:lamchoice} 
so small that $2<\lambda<p+1$ and 
Lemmas \ref{lem:cazeLio} and \ref{lem:maximal} hold, 
where $\eps$ can be determined by $p$ and $q$ only. 
Then, we are now in a position to prove Proposition \ref{pro:boot}.

\begin{proof}[Proof of Proposition \ref{pro:boot}]
Let $\tilde \tau\geq \tau_*+2$. 
We see from $p>1$ and \eqref{eq:pp1qbeta} that 
\[
	\frac{\beta}{2} < \frac{p\beta}{p+1} < q. 
\]
Then by Lemmas \ref{lem:cazeLio} and \ref{lem:maximal} and 
the H\"older inequality, 
\[
\begin{aligned}
	&\int_{\tilde \tau-1-2^{-k}}^{\tilde \tau}  \left( 
	\int_{\Omega(\sigma)} |w|^{p+1} \rho \psi_k^4 d\eta
	\right)^{q+\frac{1}{p+1}} d\sigma  \\
	&\leq 
	C_k \left( 
	\int_{\tilde \tau-1-2^{-(k-1)}}^{\tilde \tau} 
	\left( \int_{\Omega(\sigma)} 
	|\nabla w|^2 \rho \psi_{k-1}^4 d\eta \right)^\frac{\beta}{2} d\sigma
	\right)^\frac{1}{r} + C_k \\
	&\leq 
	C_k \left( 
	\int_{\tilde \tau-1-2^{-(k-1)}}^{\tilde \tau} 
	\left( \int_{\Omega(\sigma)} 
	\frac{|\nabla w|^2}{2} \rho \psi_{k-1}^4 d\eta \right)^q d\sigma
	\right)^\frac{\beta}{2 q r} 
	+ C_k. 
\end{aligned}
\]
From \eqref{eq:cEdef}, Lemma \ref{lem:locEunibdd} and \eqref{eq:assumpq2}, 
it follows that 
\[
\begin{aligned}
	&\int_{\tilde \tau-1-2^{-(k-1)}}^{\tilde \tau} 
	\left( \int_{\Omega(\sigma)} 
	\frac{|\nabla w|^2}{2} \rho \psi_{k-1}^4 d\eta \right)^q d\sigma \\
	&\leq 
	\int_{\tilde \tau-1-2^{-(k-1)}}^{\tilde \tau} 
	\left( 
	\cE_{(\tilde x, T)}^{\psi_{k-1}} (\tau)
	+ \frac{1}{p+1} \int_{\Omega(\sigma)} |w|^{p+1} \rho \psi_{k-1}^4 d\eta 
	\right)^q d\sigma 
	\leq C_k. 
\end{aligned}
\]
Hence we obtain 
\[
	\int_{\tilde \tau-1-2^{-k}}^{\tilde \tau}  \left( 
	\int_{\Omega(\sigma)} |w|^{p+1} \rho \psi_k^4 d\eta
	\right)^{q+\frac{1}{p+1}} d\sigma \leq C_k, 
\]
where $C_k>0$ is independent of $\tilde \tau$. 
Taking the supremum over $\tilde \tau\geq \tau_*+2$ 
yields the desired estimate \eqref{eq:assumpqp1}. 
The proof is complete. 
\end{proof}

As a final preparation for proving Theorem \ref{th:main}, 
we give the following estimate 
based on Propositions \ref{pro:base} (the basis of the inductive steps) 
and \ref{pro:boot} (each of the inductive steps). 

\begin{proposition}\label{pro:lessp1}
Let $p>1$ and $1<\lambda<p+1$. 
Then there exist $0<R_*<1$ depending only on $p$ and $\lambda$ 
and $C>0$ depending only on 
$n$, $p$,  $\lambda$, $\Omega$, $\tau_*$, $C_*$ and $C_*'$
such that for $\tilde x\in \Omega$, 
\[
	\sup_{\tau\geq \tau_*+1} 
	\int_{\Omega(\tau)\cap B_{R_*}(0)} 
	|w_{(\tilde x,T)}(\eta,\tau)|^\lambda \rho d\eta \leq  C, 
\]
equivalently, 
\[
	\sup_{ T-\delta^2/e \leq t < T } 
	(T-t)^\frac{\lambda}{p-1}
	\int_{\Omega \cap B_{R_*\sqrt{T-t}}(\tilde x)}  
	|u(x,t)|^\lambda 
	K_{(\tilde x,T)}(x,t) dx \leq  C, 
\]
where $\tau_*$, $C_*$ and $C_*'$ are given in \eqref{eq:stars}, 
$\delta$ is given in Proposition \ref{pro:qmono} and 
they are independent of $\tilde x\in \Omega$. 
\end{proposition}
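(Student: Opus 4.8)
The plan is to combine the base estimate of Proposition~\ref{pro:base} with the inductive step of Proposition~\ref{pro:boot}, iterated finitely many times, and then to upgrade the resulting space--time integral bound to one uniform in $\tau$ by means of Lemma~\ref{lem:CLinter}. First I would reduce to the case $2<\lambda<p+1$. Indeed, once the estimate is known for some fixed exponent $\mu\in(2,p+1)$, say $\mu=(p+3)/2$, then for $1<\lambda\le 2$ H\"older's inequality applied to the finite measure $\rho\,d\eta$ on $B_{R_*}(0)$ gives $\int_{\Omega(\tau)\cap B_{R_*}(0)}|w|^{\lambda}\rho\,d\eta\le (\int_{\Omega(\tau)\cap B_{R_*}(0)}|w|^{\mu}\rho\,d\eta)^{\lambda/\mu}(\int_{\R^n}\rho\,d\eta)^{1-\lambda/\mu}$, which yields the claim for such $\lambda$ with the same $R_*$. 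So from now on I fix $\lambda\in(2,p+1)$.

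Next I would set up the iteration, adopting the convention $\psi_0\equiv1$, so that $\cE_{(\tilde x,T)}^{\psi_0}=\cE_{(\tilde x,T)}$ is controlled by Lemma~\ref{lem:GKul}. With this convention, Proposition~\ref{pro:base} is precisely the hypothesis \eqref{eq:assumpq2} with $q=2$ and $k=1$, the constant $C_1'$ depending only on $n,p,\Omega,\tau_*,C_*,C_*'$. Each application of Proposition~\ref{pro:boot} advances the pair $(q,k)$ to $(q+\tfrac1{p+1},\,k+1)$; hence after $m-1$ applications one obtains \eqref{eq:assumpq2} with $k=m$ and $q=q_m:=2+\tfrac{m-1}{p+1}$, with a constant depending only on $m,n,p,\Omega,R,\tau_*,C_*,C_*'$. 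Since only finitely many steps are performed and $R$ is fixed once $m$ is known (subject to \eqref{eq:Rcond}), all of these constants ultimately depend only on $n,p,\lambda,\Omega,\tau_*,C_*,C_*'$.

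Now, given $\lambda\in(2,p+1)$, I would choose $m$ (depending only on $p$ and $\lambda$) so large that $p+1-\tfrac{p-1}{q_m+1}>\lambda$, which is possible since $q_m\to\infty$, and then choose the parameter $\eps$ of \eqref{eq:lamchoice} at the level $q=q_m$ small enough (still depending only on $p$ and $q_m$) that $\lambda':=p+1-\tfrac{p-1}{q_m+1}-\eps$ satisfies $\lambda<\lambda'<p+1$. With \eqref{eq:assumpq2} now available at level $k=m$ and exponent $q=q_m$, Lemma~\ref{lem:CLinter} gives $\sup_{\tau\in[\tilde\tau-1-2^{-m},\tilde\tau]}\int_{\Omega(\tau)}|w_{(\tilde x,T)}|^{\lambda'}\rho\,\psi_m^{4}\,d\eta\le C$ for every $\tilde\tau\ge\tau_*+2$; H\"older's inequality on the finite measure $\rho\,\psi_m^{4}\,d\eta$ together with taking the union of these $\tau$-intervals over $\tilde\tau\ge\tau_*+2$ then yields $\sup_{\tau\ge\tau_*+1}\int_{\Omega(\tau)}|w_{(\tilde x,T)}|^{\lambda}\rho\,\psi_m^{4}\,d\eta\le C$ with $C$ depending only on $n,p,\lambda,\Omega,\tau_*,C_*,C_*'$. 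Since $\psi_m\equiv1$ on $B_{R/2^m}(0)$, putting $R_*:=R/2^m$ gives $\sup_{\tau\ge\tau_*+1}\int_{\Omega(\tau)\cap B_{R_*}(0)}|w_{(\tilde x,T)}|^{\lambda}\rho\,d\eta\le C$, which is the first form of the assertion.

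It remains to translate this into the original variables. By \eqref{eq:backw}, with $t=T-e^{-\tau}$ and $x=\tilde x+(T-t)^{1/2}\eta$, one has $d\eta=(T-t)^{-n/2}dx$, $\rho(\eta)=e^{-|x-\tilde x|^{2}/(4(T-t))}$, $|w_{(\tilde x,T)}(\eta,\tau)|^{\lambda}=(T-t)^{\lambda/(p-1)}|u(x,t)|^{\lambda}$, and $\eta\in\Omega(\tau)\cap B_{R_*}(0)$ precisely when $x\in\Omega\cap B_{R_*\sqrt{T-t}}(\tilde x)$, so that $\int_{\Omega(\tau)\cap B_{R_*}(0)}|w_{(\tilde x,T)}|^{\lambda}\rho\,d\eta=(T-t)^{\lambda/(p-1)}\int_{\Omega\cap B_{R_*\sqrt{T-t}}(\tilde x)}|u(x,t)|^{\lambda}K_{(\tilde x,T)}(x,t)\,dx$; moreover, since $\tau_*=-\log(\delta^{2})$, the range $\tau\ge\tau_*+1$ is exactly $T-\delta^{2}/e\le t<T$. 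This gives the equivalent formulation in the statement, and the proof is complete. The only genuinely delicate point is the bookkeeping of the finite iteration: one must check that $q_m$ overtakes the threshold imposed by $\lambda$ after a number of steps governed by $p$ and $\lambda$ alone, and that the constants and the cut-off scale $R/2^m$ stay under control through those finitely many steps; everything else amounts to unwinding estimates already established.
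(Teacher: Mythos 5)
Your argument is correct and follows essentially the same route as the paper's proof: start from Proposition~\ref{pro:base} as the base case of \eqref{eq:assumpq2} (with the natural convention $\psi_0\equiv1$), iterate Proposition~\ref{pro:boot} finitely many times to drive the outer exponent $q_m=2+(m-1)/(p+1)$ past the threshold needed so that $p+1-\tfrac{p-1}{q_m+1}$ clears $\lambda$, invoke Lemma~\ref{lem:CLinter} at that level to obtain a uniform-in-$\tau$ bound in $L^{\lambda'}$ with $\lambda'>\lambda$ on the support of $\psi_m$, downgrade by H\"older, and then unwind the similarity variables. The only cosmetic differences from the paper are the initial reduction to $\lambda\in(2,p+1)$ (the paper instead picks $\lambda_*\in(\max\{\lambda,2\},\,p+1-\tfrac{p-1}{q_*+1})$ directly and uses H\"older at the very end, so the two procedures coincide), and your explicit remark that the cut-off scale $R$ must satisfy \eqref{eq:Rcond} for all finitely many values of $\beta$ that arise along the iteration, which the paper leaves implicit when it states that $R$ is determined by $n,p,\lambda,\Omega$ only. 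Both of these are accurate points, and your bookkeeping of the pair $(q,k)\mapsto(q+\tfrac1{p+1},k+1)$, the resulting union of time intervals giving $\tau\geq\tau_*+1$, and the change of variables back to $(x,t)$ are all correct.
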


\begin{proof}
Let $p>1$ and $1<\lambda<p+1$. 
We take $q_*\geq2$ and $\lambda_*>\lambda$ 
depending only on $p$ and $\lambda$ 
such that 
\[
	\lambda< \lambda_*< p+1 - \frac{p-1}{q_*+1}. 
\]
By Proposition \ref{pro:base} and by inductively 
applying Proposition \ref{pro:boot} 
finitely many times, there exists $k_*\geq 1$ 
depending only on $p$ and $q_*$ such that 
\[
	\sup_{\tau\geq \tau_*+2}\int_{\tau-1-2^{-{k_*}}}^{\tau} 
	\left( \int_{\Omega(\sigma)} |w_{(\tilde x,T)}|^{p+1} \rho \psi_{k_*}^4 d\eta 
	\right)^{q_*} d\sigma \leq C,  
\]
where $C>0$ depends only on 
$k_*$, $n$, $p$, $\lambda$, $\Omega$, $R$, $\tau_*$, $C_*$ and $C_*'$. 
Then by the same argument as in Lemma \ref{lem:CLinter}, 
\[
	\sup_{\tau\in[\tilde \tau-1-2^{-k_*},\tilde \tau]} 
	\| [w \rho^\frac{1}{\lambda_*} \psi_{k_*}^\frac{4}{\lambda_*}]
	(\cdot,\tau)\|_{L^{\lambda_*}(\Omega(\tau))} 
	\leq  C 
	\quad \mbox{ for }\tilde \tau\geq \tau_*+2, 
\]
where $C>0$ is independent of $\tilde \tau$. 
In particular, 
\[
	\| [w \rho^{1/\lambda_*} \psi_{k_*}^{4/\lambda_*}]
	(\cdot,\tau)\|_{L^{\lambda_*}(\Omega(\tau))} \leq  C 
	\quad \mbox{ for }\tau\geq \tau_*+1. 
\]
This together with $\psi_{k_*}=1$ in $B_{R/2^{k_*}}(0)$ 
and $\lambda_*>\lambda$ gives the desired estimate with some constant 
$C>0$ depending only on 
$k_*$, $n$, $p$, $\lambda$, $\Omega$, $R$, $\tau_*$, $C_*$ and $C_*'$. 
As for dependence, we observe that $k_*$ 
(resp. $R$ in \eqref{eq:Rcond}) 
is determined by $p$ and $\lambda$ (resp. $n$, $p$, $\lambda$ and $\Omega$) only. 
The proof is complete. 
\end{proof}

Finally, we show the type I rate in the energy subcritical range.

\begin{proof}[Proof of Theorem \ref{th:main}]
Let $p<p_S$ and $\tilde x\in\Omega$. 
We prove the theorem by deducing an $L^\infty$-estimate of $w$. 
Since $p<p_S$ is equivalent to 
$n(p-1)/2<p+1$, we can choose $q, r>1$ such that 
\[
	\max\left\{ \frac{n}{2}, 1 \right\} < q < \frac{p+1}{p-1}, 
	\quad 
	\frac{1}{r} + \frac{n}{2q} < 1. 
\]
Then by \eqref{eq:wL2Linfty} and Proposition \ref{pro:lessp1} 
with $\lambda=(p-1)q$, 
\[
\begin{aligned}
	&\int_{\tau'-1}^{\tau'}
	\int_{\Omega(\tau)\cap B_{R_*}} 
	|w_{(\tilde x,T)}(\eta,\tau)|^2 \rho d\eta d\sigma \leq  C, \\
	&\int_{\tau'-1}^{\tau'}
	\left( \int_{\Omega(\sigma)\cap B_{R_*}} 
	\left( |w_{(\tilde x,T)}(\eta,\tau)|^{p-1} \right)^q \rho d\eta 
	\right)^\frac{r}{q} d\sigma  \leq  C, 
\end{aligned}
\]
for $\tau' \geq \tau_*+2$, 
where $C>0$ is independent of $\tilde x\in \Omega$. 
By the same argument as in Giga--Kohn \cite[Subsection 3A]{GK87} 
based on the interior and boundary regularity 
(more precisely, see line 9 on page 14 through line 16 on page 16 
and Remark 3.6 in \cite{GK87}), 
there exists $C>0$  independent of $\tilde x\in \Omega$ such that 
\[
	|w_{(\tilde x,T)}(0,\tau)| \leq C
\] 
for $\tau'-1/2<\tau<\tau'$ and $\tau'\geq \tau_*+2$. 
Thus, from \eqref{eq:backw} and from 
$u\in L^\infty((0,T-e^{-(\tau_*+2)}); L^\infty(\Omega))$ by \eqref{eq:regu}, 
it follows that 
\[
	(T-t)^\frac{1}{p-1} |u(\tilde x,t) |
	\leq C
\]
for $\tilde x\in \Omega$  and $0<t<T$, 
where $C>0$ is independent of $\tilde x$. 
Hence $u$ satisfies the desired type I blow-up estimate. 
The proof is complete. 
\end{proof}

\section*{Acknowledgments}
The authors wish to express their gratitude 
to Professors Philippe Souplet and Yifu Zhou 
for their valuable comments.
The first author was supported in part by JSPS KAKENHI 
Grant Number 23K20803. 
The second author was supported in part 
by JSPS KAKENHI Grant Numbers 22KK0035, 23K12998 and 23K22402.
The third author was supported in part 
by JSPS KAKENHI Grant Numbers 22KK0035, 23K13005 and 25KJ0013.

\end{document}